\spnewtheorem*{conj}{Conjecture}{\bf}{\it}
\newcommand{\reals}{\mathbb{R}}
\newcommand{\AT}{A}
\DeclareMathOperator*{\dom}{dom}
\DeclareMathOperator*{\inte}{int}
\DeclareMathOperator*{\epi}{epi}
\DeclareMathOperator*{\argmin}{arg\,min}
\newcommand{\norm}[1]{\left\|#1\right\|}
\newcommand{\inner}[2]{\left\langle #1,#2 \right\rangle}
\newtheorem{fact}{Fact}
\newcommand{\cA}{\mathcal{A}}
\newcommand{\cB}{\mathcal{B}}
\newcommand{\cO}{\mathcal{O}}
\newcommand{\cU}{\mathcal{U}}
\newcommand{\cV}{\mathcal{V}}
\newcommand{\RR}{\mathbb{R}}
\begin{document}
\title{
Mirror Duality in Convex Optimization
}
\authorrunning{Kim, Park, Ozdaglar, Diakonikolas, and Ryu}
\titlerunning{Mirror Duality in Convex Optimization} 
\author{
Jaeyeon Kim
\and 
Chanwoo Park 
\and
Asuman Ozdaglar
\and
\\
Jelena Diakonikolas
\and
Ernest K. Ryu
}

\institute{Jaeyeon Kim \at
              Seoul National University\\
              \email{kjy011102@snu.ac.kr}           
           \and
Chanwoo Park \at
              MIT\\
              \email{cpark97@mit.edu}           
           \and
Asuman Ozdaglar \at
              MIT\\
              \email{asuman@mit.edu}           
                \and
Jelena Diakonikolas \at
              University of Wisconsin--Madison\\
              \email{jdiakonikola@wisc.edu}           
                \and
Ernest K. Ryu \at
              Seoul National University\\
              \email{ernestryu@snu.ac.kr} 
}

\date{Received: date / Accepted: date}

\makeatletter

\renewcommand{\paragraph}{%
  \@startsection{paragraph}{4} {\z@}
  {1ex \@plus 1ex \@minus .2ex}
  {-1ex}%
  {\bfseries}%
}                                  
 
\makeatother
\maketitle
\begin{abstract}
While first-order optimization methods are usually designed to efficiently reduce the function value $f(x)$, there has been recent interest in methods efficiently reducing the magnitude of $\nabla f(x)$, and the findings show that the two types of methods exhibit a certain symmetry. In this work, we present mirror duality, a one-to-one correspondence between mirror-descent-type methods reducing function value and reducing gradient magnitude. Using mirror duality, we obtain the dual accelerated mirror descent (dual-AMD) method that efficiently reduces $\psi^*(\nabla f(x))$, where $\psi$ is a distance-generating function and $\psi^*$ quantifies the magnitude of $\nabla f(x)$. We then apply dual-AMD to efficiently reduce $\|\nabla f(\cdot) \|_q$ for $q\in [2,\infty)$ and to efficiently compute $\varepsilon$-approximate solutions of the optimal transport problem.

\end{abstract}
\section{Introduction}
Consider the problem
\begin{align*}
\begin{array}{ll}
\underset{x \in X}{\mbox{minimize}}&\quad f(x),
\end{array}
\end{align*} 
where $f\colon X\to\RR$ is a differentiable convex function and $X$ is a reflexive Banach space. One of the most fundamental facts in convex optimization is that every stationary point is also a global minimum, so the minimization problem is equivalent to
\begin{align*}
\begin{array}{ll}
\underset{x \in X}{\mbox{find}}&\quad 0=\nabla f(x).
\end{array}
\end{align*}

However, the problem of efficiently approximating function minima, i.e., making $f(x)-\inf_{x\in X}f(x)$ small, is quite different from the problem of efficiently computing approximate stationary points, i.e., making $\nabla f(x)$ small. Methods that exhibit optimal convergence rates under one of the criteria do not, in general, exhibit optimal convergence rates under the other. In particular, Nesterov's accelerated gradient method is an optimal method for the former but not the latter criterion.  Recently, methods like OGM-G \cite{kim2021optimizing} have been proposed as accelerated methods for reducing $\|\nabla f(x)\|_2$ when $X$ is the Euclidean space. However, the problem of finding approximate stationary points is still much less understood than the problem of finding approximate function minima in more general normed spaces. 

On the other hand, the framework of mirror descent has been a cornerstone to designing efficient methods for optimization problems with structures that are well-behaved in non-Euclidean geometries. However, compared to the large body of research on mirror-descent-type methods for efficiently reducing function value, which includes accelerated mirror descent, mirror-descent-type methods for reducing gradient magnitude have been much less explored.

In this work, we start from the perspective that reducing gradient magnitude \emph{is} reducing a function value in an appropriate mirror-descent setup. This observation, due to  \cite{doi:10.1137/19M130858X} and restated in Section~\ref{sec::disc_md}, more specifically is that exchanging the roles of the objective function and the conjugate of the distance-generating function (DGF) in (unaccelerated) mirror descent yields a method that reduces the gradient magnitude as measured by the DGF. 

We then build upon this insight and present mirror duality, a one-to-one correspondence between mirror-descent-type methods reducing function value and those reducing gradient magnitude. Using mirror duality, we obtain dual accelerated mirror descent \eqref{eqn::AMD-G} as the \emph{mirror dual} of accelerated mirror descent \eqref{eqn::AMD}. With mirror duality, the prior analysis showing that \ref{eqn::AMD} efficiently reduces function value implies that \ref{eqn::AMD-G} efficiently reduces $\psi^*(\nabla f(x))$, where $\psi$ is a distance-generating function and $\psi^*$ quantifies the magnitude of $\nabla f(x)$. Finally, we show how to apply \ref{eqn::AMD-G} to efficiently reduce $\|\nabla f(\cdot) \|_q$ for $q\in [2,\infty)$ and to efficiently compute approximate solutions of the optimal transport problem.

\newpage

\subsection{Preliminaries} 
\label{sec::convex_Banach}

We quickly review relevant basic concepts and set up the notation. 

\paragraph{Banach spaces.}
We follow the standard definitions of convex analysis in Banach spaces \cite{barbu2012}. Let $(X,\norm{\cdot})$ be a Banach space, $(X^{*},\norm{\cdot}_{*})$ be its (continuous) dual space, and $\inner{\cdot}{\cdot}\colon X^*\times X \rightarrow \mathbb{R}$ be the canonical pairing between $X^*$ and $X$. In finite-dimensional setups, $X=X^*=\mathbb{R}^n$, $\norm{\cdot}$ and $\norm{\cdot}_*$ are dual norms, and $\langle u,x\rangle =u^\intercal x$ is the standard Euclidean inner product. We assume $X$ is reflexive: $X^{**}=X$ and the dual norm of $\norm{\cdot}_{*}$ is $\norm{\cdot}$, which always holds when $X$ is finite-dimensional. We denote the Euclidean norm by $\norm{\cdot}_2$.

\paragraph{Basic convex analysis.}
A set $C \subset X$ is convex if $\lambda x + (1-\lambda)y \in C$ for all $x,y \in C , \, \lambda \in (0,1)$. The interior of $C\subseteq X$ is $\inte C = \{x\in C \,|\,\exists\, \epsilon>0,\,  B(x,\epsilon) \subset C\}$, where $B(x,\epsilon)$ denotes the open ball of radius $\epsilon$ centered at $x$. Let $\overline{\mathbb{R}}=\mathbb{R}\cup\{\pm\infty\}$.
A function $f\colon X\rightarrow\overline{\mathbb{R}}$ is convex if $f(\lambda x + (1-\lambda) y) \leq \lambda f(x) + (1-\lambda) f(y)$ for any $x, y \in X$ and $\lambda \in (0,1)$.
If the inequality holds strictly for all $x, y \in X$ such that $f(x)<\infty$ and $f(y)<\infty$ and $\lambda \in (0,1)$, then $f$ is strictly convex.
A function $f$ is proper if $f(x)>-\infty$ for all $x\in X$ and $f(x)<\infty$ for some $x\in X$.
We say that $f$ is closed if $\epi (f)=\{(x,t)\,|\,f(x)\le t\}\subseteq X\times \mathbb{R}$ is closed (in the standard product topology).
(See \cite[Proposition 2.5]{barbu2012} for equivalent definitions of closeness.) We say a function $f$ is CCP if it is convex, closed, and proper. Write $\dom f = \{x\in X\rvert \,  f(x)  < \infty\}$ to denote the (effective) domain of $f$.

\paragraph{Differentiability and subdifferentiability.}
Given $f\colon X \rightarrow \overline{\mathbb{R}}$, we define its convex conjugate $f^*\colon X^*\rightarrow \overline{\mathbb{R}}$ as $f^{*}(u) = \sup_{x \in X} \left\{ \inner{x}{u}-f(x)\right\}$. 
Define the biconjugate $f^{**}\colon X\rightarrow \overline{\mathbb{R}}$ (remember, $X^{**}=X)$ as
 $f^{**}(x) = \sup_{u \in X^*} \left\{\inner{u}{x}-f^*(u)\right\}$. 
If $f$ is CCP, then $f^{**}=f$ \cite[Theorem 2.22]{barbu2012}. We say that $f\colon X\rightarrow \overline{\mathbb{R}}$ is (Fr\'echet) differentiable at $x\in X$ if $|f(\cdot)|< \infty$ on an open neighborhood of $x$ and there exists a $\nabla f(x)\in X^*$ such that 
\[
\langle \nabla f(x),h\rangle=
\lim_{\varepsilon\rightarrow 0}
\frac{f(x+\varepsilon h)-f(x)}{\varepsilon},\qquad\forall\, h\in X.
\]
If the above holds, we define $\nabla f(x)\in X^*$ to be the gradient of $f$ at $x$. 
Write $\dom \nabla f$ for the set of points on which $f$ is Fr\'echet differentiable.
Finally, we say $f$ is differentiable (everywhere) if $f$ is differentiable for all $x\in X$. Let $f\colon X\rightarrow \overline{\mathbb{R}}$ be a CCP function. We say $g\in X^*$ is a subgradient of $f$ at $x$ if
\[
f(y)\ge f(x)+\langle g,y-x\rangle,\quad \forall\,y \in X.
\]
We write $\partial f(x)\subseteq X^*$ for the set of subgradients of $f$ at $x$. 

\paragraph{Smoothness.}
For $L>0$, we say $f\colon X\rightarrow \overline{\mathbb{R}}$ is $L$-smooth (with respect to $\norm{\cdot}$) if $|f(x)|<\infty$ everywhere (so $f\colon X\rightarrow \mathbb{R}$), $f$ is differentiable everywhere, and $\norm{ \nabla f(x)-\nabla f(y)}_{*} \leq L\norm{x-y}$ for all $x,y \in X$.
The following lemma establishes equivalent conditions defining $L$-smoothness.
\begin{lemma}\cite[Section~3.5]{zalinescu2002convex} [Banach--Baillon--Haddad theorem]\label{lemma:smooth}
Let $(X,\|\cdot\|)$ be a Banach space, $f\colon X\rightarrow \mathbb{R}$ be CCP and differentiable, and $L>0$.
The following are equivalent:
\begin{itemize}
\item[(i)] $\|\nabla f(x)-\nabla f(y)\|_*\le L\|x-y\|,\quad \forall\, x,y \in X$.
\item[(ii)] $\langle \nabla f(x)-\nabla f(y),x-y\rangle \ge \frac{1}{L}\|\nabla f(x)-\nabla f(y)\|^2_*,\quad \forall x,y \in X$.
\item[(iii)] $f(x)-f(y)+\inner{\nabla f(x)}{y-x}+\frac{1}{2L}\norm{\nabla f(x)-\nabla f(y)}^2_* \leq 0,\quad \forall x,y \in X$.
\end{itemize}
\end{lemma}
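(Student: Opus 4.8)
The plan is to prove the three equivalences via the cycle $(i)\Rightarrow(iii)\Rightarrow(ii)\Rightarrow(i)$; the last two implications are one-liners, and essentially all the content sits in $(i)\Rightarrow(iii)$, which is the Banach-space incarnation of the Baillon--Haddad argument. As a running tool I would first record the ``descent lemma'': if a CCP differentiable $g\colon X\to\RR$ satisfies (i), then for all $y,w\in X$ the map $t\mapsto g(y+t(w-y))$ is $C^1$ on $[0,1]$ with derivative $\langle\nabla g(y+t(w-y)),w-y\rangle$ (since $\nabla g$ is continuous along the segment, being Lipschitz), so the fundamental theorem of calculus gives
\[
g(w)-g(y)-\langle\nabla g(y),w-y\rangle=\int_0^1\langle\nabla g(y+t(w-y))-\nabla g(y),w-y\rangle\,dt\le\tfrac{L}{2}\|w-y\|^2,
\]
where the bound uses the pairing inequality $\langle u,v\rangle\le\|u\|_*\|v\|$ and (i).

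For $(i)\Rightarrow(iii)$, fix $x\in X$ and apply the descent lemma to the auxiliary function $\phi(z)=f(z)-f(x)-\langle\nabla f(x),z-x\rangle$. This $\phi$ is CCP and differentiable with $\nabla\phi(z)=\nabla f(z)-\nabla f(x)$, so it also satisfies (i); moreover $\phi\ge0$ on $X$ by the subgradient inequality for the convex differentiable $f$, and $\phi(x)=0$. The descent lemma for $\phi$ at an arbitrary point $y$ gives $\phi(w)\le\phi(y)+\langle\nabla\phi(y),w-y\rangle+\tfrac{L}{2}\|w-y\|^2$ for all $w$. Minimizing the right-hand side over $w$: with $v=w-y$, the identity $\inf_{\|v\|=1}\langle\nabla\phi(y),v\rangle=-\|\nabla\phi(y)\|_*$ (just the definition of the dual norm — near-minimizers suffice, so no norm-attainment is needed) reduces it to minimizing $-t\|\nabla\phi(y)\|_*+\tfrac{L}{2}t^2$ over $t\ge0$, giving the value $\phi(y)-\tfrac{1}{2L}\|\nabla\phi(y)\|_*^2$. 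Hence $0\le\inf_w\phi(w)\le\phi(y)-\tfrac{1}{2L}\|\nabla\phi(y)\|_*^2$, and substituting back $\phi(y)=f(y)-f(x)-\langle\nabla f(x),y-x\rangle$ and $\nabla\phi(y)=\nabla f(y)-\nabla f(x)$ produces exactly (iii).

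For $(iii)\Rightarrow(ii)$, add (iii) for the pair $(x,y)$ to (iii) for the pair $(y,x)$: the function-value terms cancel, the first-order terms combine to $-\langle\nabla f(x)-\nabla f(y),x-y\rangle$, and the two quadratic terms add to $\tfrac1L\|\nabla f(x)-\nabla f(y)\|_*^2$, which rearranges to (ii). For $(ii)\Rightarrow(i)$, apply $\langle u,v\rangle\le\|u\|_*\|v\|$ with $u=\nabla f(x)-\nabla f(y)$ and $v=x-y$ to the left side of (ii), obtaining $\tfrac1L\|\nabla f(x)-\nabla f(y)\|_*^2\le\|\nabla f(x)-\nabla f(y)\|_*\|x-y\|$, and divide by $\|\nabla f(x)-\nabla f(y)\|_*$ (the case $\nabla f(x)=\nabla f(y)$ being trivial).

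The main obstacle is $(i)\Rightarrow(iii)$, and specifically the genuinely Banach-space bookkeeping inside it: the gradient $\nabla\phi(y)$ lives in $X^*$, so one cannot literally perform the Hilbert-space gradient step $y-\tfrac1L\nabla\phi(y)$; instead one minimizes the quadratic upper bound over an honest displacement $v\in X$, using only the definition of $\|\cdot\|_*$. The other point to be careful about is the mild regularity behind the FTC along line segments — finiteness of $f$ in a neighborhood of the segment and continuity of $\nabla f$ there — which is supplied by the hypothesis that $f\colon X\to\RR$ is everywhere finite and differentiable with Lipschitz gradient. Notably, reflexivity of $X$ is not needed for this lemma, since every duality step above goes through with near-optimizers rather than attained ones.
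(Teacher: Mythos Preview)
The paper does not prove this lemma; it simply cites \cite[Section~3.5]{zalinescu2002convex} and uses the result as a black box. So there is no in-paper proof to compare against.

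Your argument is correct and is the standard Baillon--Haddad proof, properly adapted to the Banach-space setting. The cycle $(i)\Rightarrow(iii)\Rightarrow(ii)\Rightarrow(i)$ is the natural one, and you handle the only genuinely non-Hilbert step cleanly: in $(i)\Rightarrow(iii)$, since $\nabla\phi(y)\in X^*$ one cannot take the literal gradient step $y-\tfrac1L\nabla\phi(y)$, and you correctly replace it by an infimum over displacements $v\in X$, using only the definition of the dual norm (so no norm-attainment, hence no reflexivity, is required). The descent lemma is justified by the FTC along segments, which is legitimate because $f$ is everywhere finite with Lipschitz (hence continuous) gradient. The remaining implications are the usual one-liners and are fine as written.
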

This result can be viewed as a generalization of the Baillon--Haddad theorem \cite[Corollary 10]{Baillon1977QuelquesPD} to Banach spaces, so we call it the \emph{Banach--Baillon--Haddad theorem}. We call the inequality of (\emph{iii}) the \emph{cocoercivity inequality}.

\paragraph{Strong convexity.}
For $\sigma>0$, we say $h\colon X\rightarrow \overline{\mathbb{R}}$ is \emph{$\sigma$-strongly convex} (with respect to $\norm{\cdot}$)
if
\[
h(y)\ge h(x)+\langle \partial h(x),y-x\rangle +\frac{\sigma}{2}\|y-x\|^2,\quad \forall\,x,y \in X,
\]
where the inequality is understood to hold for all elements of $\partial h(x)$. The following lemma establishes the equivalence between the strong convexity and the smoothness of the convex conjugate.

\begin{lemma}\cite[Section~3.5]{zalinescu2002convex} \label{lemma:conjugate_of_sc_is_smooth}
Let $(X,\|\cdot\|)$ be a reflexive Banach space, $h\colon X\rightarrow\overline{\mathbb{R}}$ a CCP function, and $\sigma>0$.
Then,
$h$ is $\sigma$-strongly convex with respect to $\norm{\cdot}$ if and only if
$h^{*}$ is $1/\sigma$-smooth with respect to $\norm{\cdot}_{*}$. 
\end{lemma}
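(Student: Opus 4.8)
The plan is to prove the two implications by dual Fenchel-conjugate arguments. The common mechanism is that conjugation sends $\frac{\sigma}{2}\norm{\cdot}^2$ on $X$ to $\frac{1}{2\sigma}\norm{\cdot}_*^2$ on $X^*$ and reverses inequalities; reflexivity enters precisely through the companion identity $\bigl(\tfrac{1}{2\sigma}\norm{\cdot}_*^2\bigr)^{*}=\tfrac{\sigma}{2}\norm{\cdot}^2$, which requires the dual norm of $\norm{\cdot}_*$ to be $\norm{\cdot}$.

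\emph{Suppose $h$ is $\sigma$-strongly convex.} First I would observe that strong convexity forces $h$ to dominate a quadratic: fixing $x_0\in\dom\partial h$ (nonempty for CCP $h$) and $g_0\in\partial h(x_0)$ gives $h(\cdot)\ge h(x_0)+\inner{g_0}{\cdot-x_0}+\frac{\sigma}{2}\norm{\cdot-x_0}^2$, and conjugating this minorant yields $\dom h^{*}=X^{*}$. Since $h^{*}$ is a conjugate it is convex, lsc, and proper, hence continuous on the interior of its domain, which is all of $X^{*}$; in particular $\partial h^{*}(u)\neq\emptyset$ for every $u$. Next, fix $u\in X^{*}$, choose $x\in\partial h^{*}(u)$ — equivalently $u\in\partial h(x)$, using $h=h^{**}$ — and conjugate the strong-convexity inequality $h(y)\ge h(x)+\inner{u}{y-x}+\frac{\sigma}{2}\norm{y-x}^2$. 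Together with the Fenchel--Young equality $h(x)=\inner{u}{x}-h^{*}(u)$, this produces the dual descent inequality
\[
h^{*}(v)\;\le\;h^{*}(u)+\inner{x}{v-u}+\tfrac{1}{2\sigma}\norm{v-u}_*^{2},\qquad\forall\,v\in X^{*},
\]
and combining it with the subgradient inequality $h^{*}(v)\ge h^{*}(u)+\inner{x}{v-u}$ sandwiches the first-order remainder of $h^{*}$ at $u$ between $0$ and $\frac{1}{2\sigma}\norm{v-u}_*^{2}$; dividing by $\norm{v-u}_*$ and letting $v\to u$ shows $h^{*}$ is Fr\'echet differentiable at $u$ with $\nabla h^{*}(u)=x$, so $u\in\partial h(\nabla h^{*}(u))$. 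Finally, for $u,v\in X^{*}$, adding the strong-convexity inequalities of $h$ at $\nabla h^{*}(u)$ and $\nabla h^{*}(v)$ gives $\inner{u-v}{\nabla h^{*}(u)-\nabla h^{*}(v)}\ge\sigma\norm{\nabla h^{*}(u)-\nabla h^{*}(v)}^{2}$, and Cauchy--Schwarz then yields $\norm{\nabla h^{*}(u)-\nabla h^{*}(v)}\le\frac{1}{\sigma}\norm{u-v}_*$, i.e.\ $h^{*}$ is $1/\sigma$-smooth.

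\emph{Conversely, suppose $h^{*}$ is $1/\sigma$-smooth.} Put $g:=h^{*}$, which is CCP and $1/\sigma$-smooth on the reflexive space $X^{*}$; since $h$ is CCP and $X$ is reflexive, $h=h^{**}=g^{*}$, so it suffices to show $g^{*}$ is $\sigma$-strongly convex. From the descent inequality $g(v)\le g(u)+\inner{\nabla g(u)}{v-u}+\frac{1}{2\sigma}\norm{v-u}_*^{2}$ for all $u,v\in X^{*}$ (which follows from Lemma~\ref{lemma:smooth}(iii), applied to $g$ on $X^{*}$, together with Cauchy--Schwarz), I would conjugate in $v$ for each fixed $u$ and simplify using $\bigl(\tfrac{1}{2\sigma}\norm{\cdot}_*^{2}\bigr)^{*}=\tfrac{\sigma}{2}\norm{\cdot}^{2}$ to get
\[
h(x)=g^{*}(x)\;\ge\;\inner{u}{x}-g(u)+\tfrac{\sigma}{2}\norm{x-\nabla g(u)}^{2},\qquad\forall\,u\in X^{*},\ \forall\,x\in X.
\]
Then for any $x\in\dom\partial h$ and $p\in\partial h(x)=\partial g^{*}(x)$, differentiability of $g$ forces $x=\nabla g(p)$ and $g(p)=\inner{p}{x}-h(x)$; substituting $u=p$ in the displayed inequality, with an arbitrary $y\in X$ in place of $x$, gives
\[
h(y)\;\ge\;\inner{p}{y}-g(p)+\tfrac{\sigma}{2}\norm{y-x}^{2}\;=\;h(x)+\inner{p}{y-x}+\tfrac{\sigma}{2}\norm{y-x}^{2},
\]
which is exactly the defining inequality of $\sigma$-strong convexity (and is vacuous at $x$ when $\partial h(x)=\emptyset$).

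The step I expect to be the main obstacle is the finiteness-and-differentiability part of the forward direction: showing that $h^{*}$ is everywhere finite and \emph{Fr\'echet} (not merely G\^ateaux) differentiable. This is where the standing hypotheses genuinely earn their keep — reflexivity, together with the Baire-category fact that a proper lsc convex function is continuous on the interior of its domain (to force $\dom\partial h^{*}=X^{*}$), and the quantitative modulus from strong convexity (to upgrade differentiability from G\^ateaux to Fr\'echet via the sandwich above). The converse direction is comparatively soft; its only delicate point is the careful bookkeeping of the canonical pairing under the identification $X^{**}=X$.
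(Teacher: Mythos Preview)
The paper does not prove this lemma; it simply records it with the citation \cite[Section~3.5]{zalinescu2002convex} and moves on. Your proposal therefore supplies a proof where the paper gives none, and the argument you outline is the standard conjugate-duality proof: conjugation turns the strong-convexity lower bound into the quadratic upper bound that pins down Fr\'echet differentiability and the Lipschitz gradient, and conversely conjugation of the descent inequality (using $(\tfrac{1}{2\sigma}\|\cdot\|_*^2)^*=\tfrac{\sigma}{2}\|\cdot\|^2$, which is where reflexivity is used) yields the strong-convexity inequality. The proof is correct.

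One small imprecision worth fixing: the descent inequality $g(v)\le g(u)+\langle\nabla g(u),v-u\rangle+\tfrac{1}{2\sigma}\|v-u\|_*^{2}$ does not come from Lemma~\ref{lemma:smooth}(iii), which is a \emph{lower} bound on the Bregman remainder $D_g(v,u)\ge\tfrac{\sigma}{2}\|\nabla g(v)-\nabla g(u)\|^{2}$. The descent upper bound follows from condition~(i) (Lipschitz gradient) via the line-integral argument together with the dual-norm inequality you call ``Cauchy--Schwarz''. Since Lemma~\ref{lemma:smooth} asserts the equivalence (i)$\Leftrightarrow$(iii), your conclusion is unaffected, but the attribution should point to~(i).
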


\paragraph{Bregman divergence.}
Let $(X,\|\cdot\|)$ be a Banach space and let $h\colon X \rightarrow \overline{\RR}$. The \emph{Bregman divergence} with respect to $h$ is $D_h \colon X\times X\rightarrow\overline{\RR}$ defined as 
\[
D_h(x,x')=
\left\{
\begin{array}{ll}
h(x)-h(x')-\inner{\nabla h(x')}{x-x'}&\text{ for }x \in \dom h,\,x'\in \dom \nabla h\\
\infty&\text{ otherwise.}
\end{array}\right.
\]
The standard inequalities for differentiable convex $f$ can be written using the Bregman divergence as follows:
If $f$ is convex,
\begin{equation}
D_f(x,x') \geq 0, \qquad \forall\,x,x'\in X. \label{eqn::convex_with_bregman} \tag{cvx-ineq}
\end{equation}
If $f$ is convex and $L$-smooth, by Lemma~\ref{lemma:smooth},
\begin{equation}
D_f(x,x') -  \frac{1}{2L}\norm{\nabla f(x)-\nabla f(x')}^2_{*}\geq 0,\qquad \forall\,x,x'\in X.\label{eqn::coco_with_bregman} \tag{coco-ineq}
\end{equation}
If $y \in \dom \nabla h^*$, the Fenchel inequality \cite{fenchel1949conjugate} is
\[
D_h(x,\nabla h^*(y)) = h(x) + h^*(y ) - \inner{y}{x}
\geq 0, \qquad \forall\,x\in X,\,y\in X^*. \label{eqn::fenchel} \tag{fenchel-ineq}
\]

\paragraph{Legendre convexity.} We follow the definitions from \cite{Bauschke2001ESSENTIALSE}.
Let $h\colon X \to \bar{\mathbb{R}}$
be a CCP function. $h$ is \emph{essentially smooth} if $\partial h$ is locally bounded and single-valued on its domain.
Additionally, if $( \partial h)^{-1}$ is locally bounded on its domain and $h$ is strictly convex on every convex subset of $\dom \partial h$, we say $h$ is \emph{Legendre convex}.  To clarify, a set-valued function is locally bounded on $Y$ if, for any $ y \in Y$, there is a neighborhood of $y$ in $Y$ on which the output values of the set-valued function are bounded.

\subsection{Prior works}
\label{sec:prior_works}
\paragraph{First-order methods reducing gradient norm.}
Consider minimizing a differentiable $L$-smooth function $f\colon X\rightarrow\mathbb{R}$ with $N$ iterations of a first-order method. Assume $x_\star \in \argmin_{x\in \mathbb{R}^d} f(x)$ exists. Write $f(x_\star)=f_\star$. Classically, plain gradient descent achieves a $\mathcal{O}(L(f(x_0)-f_\star)/N )$ rate on $\|\nabla f(\cdot)\|_2^2$ for $L$-smooth functions, both convex and non-convex. Nemirovsky's classical work \cite{nemirovsky1991optimality,nemirovsky1992information} provides 
lower bounds for reducing $\norm{\nabla f(x_N)}^2$ via first-order deterministic methods. They proved there exists a convex and $L$-smooth function $f$ such that no $N$-step first-order deterministic method can reduce \( \| \nabla f(x_N) \|^2 \) faster than \( \Omega \left( L(f(x_0) - f_\star)/N^2 \right) \), with respect to the worst-case gaurantee. The same argument holds for \( \Omega \left( L^2 \| x_0 - x_\star \|_2^2/N^4 \right) \).

The formal study of first-order methods for efficiently reducing the gradient magnitude $\norm{\nabla f(x)}_2^2$ was initiated by Nesterov \cite{nesterov2012make}, where he proposed the regularization technique to obtain a $\tilde{\mathcal{O}}(L^2\|x_0-x_\star\|_2^2/N^4)$ rate, where $\tilde{\mathcal{O}}$ ignores logarithmic factors. This regularization technique was also used in the stochastic setup \cite{zhu2018how}. The logarithmic gap of Nesterov was closed by Kim and Fessler, who proposed OGM-G and established a $\mathcal{O}(L(f(x_0)-f_\star)/N^2 )$ rate \cite{kim2021optimizing}; it was pointed out in \cite[Remark 2.1]{nesterov2020primal} that OGM-G's rate can be combined with the classical Nesterov acceleration to imply the optimal $\mathcal{O} (L^2\|x_0-x_\star\|_2^2/N^4)$ rate. Kim and Fessler's discovery, which was made with the computer-assisted proof methodology called the performance estimation problem \cite{drori2014performance,taylor2017smooth,taylor2017exact,kim2016optimized}, kickstarted a series of follow-up works: OBL-G, a variant of OGM-G that allows a backtracking linesearch \cite{park2021optimal}; M-OGM-G, which attains the same $\mathcal{O}(L(f(x_0)-f_\star)/N^2 )$ rate with simpler rational coefficients \cite{ZhouTianSoCheng2021_practical}; and better human-understandable analyses using potential-functions \cite{lee2021geometric,diakonikolas2021potential}. On the other hand, \cite{lan2023optimal} utilizes the regularization technique of Nesterov in a clever aggregate analysis way 
to remove the logarithmic terms and achieves the optimal rate without using the concatenation technique that we describe in Section~\ref{sec::concat}. Furthermore, \cite{lan2023optimal} accomplishes this adaptively in the sense that the method does not require a priori knowledge of $L$.

There are also follow-up works in other setups; continuous-time analysis of OGM-G \cite{Suh2022continuous} and the proximal variants FISTA-G \cite{lee2021geometric} and SFG \cite{kim2023timereversed}. However, despite these active works, the acceleration mechanisms behind reducing gradient magnitude are understood much less compared to the classical techniques for reducing the function value.

\paragraph{H-duality.} Interestingly, OGM-G exhibits a certain symmetry against OGM \cite{drori2014performance,kim2016optimized}, an accelerated first-order method that improves upon Nesterov's acceleration by a factor of $2$ for reducing function values of smooth convex functions. The recent work \cite{kim2023timereversed} showed that this symmetry is fundamentally due to H-duality, a one-to-one correspondence between first-order methods that reduce function value and those that reduce gradient norm. 

We quickly review H-duality. Let $X$ be an Euclidean space equipped with the standard Euclidean norm. Let $f\colon X\rightarrow\reals$ be convex and $L$-smooth. Consider the problem 
\begin{align*}
\begin{array}{ll}
\underset{x \in X}{\mbox{minimize}}&\quad f(x).
\end{array}
\end{align*} 
Consider an $N$-step fixed-step first-order method (FSFOM) with step sizes $\{h_{k,i}\}_{0\leq i < k \leq N}\subset \RR$, defined by:
\begin{align*}
    x_{k+1} = x_k - \frac{1}{L}\sum_{i=0}^{k} h_{k+1,i}\nabla f(x_i), \quad k=0,1,\dots, N-1,
\end{align*}
where $x_0\in X$ is a starting point. We say this FSFOM is defined by the lower triangular matrix $H\in\RR^{N \times N}$ containing  $\{h_{k,i}\}_{0\leq i < k \leq N}$, i.e., $H_{k+1,i+1}=h_{k+1,i}$ if $0\leq i\leq k\leq N-1$, and $H_{k+1,i+1}=0$ otherwise. For $H\in\mathbb{R}^{N\times N}$ define its \emph{anti-diagonal transpose} $H^A\in \mathbb{R}^{N\times N}$ with $H^A_{i,j}=H_{N-j+1,N-i+1}$ for $i,j=1,\dots,N$. We refer to the FSFOM defined by $H^A$ as the \emph{H-dual} of the FSFOM defined by $H$.

To clarify, the H-dual is a dual of a first-order \emph{method}, an algorithm. This contrasts with the usual notions of duality between primal and dual spaces, functions, or optimization problems. The H-dual operation is defined mechanically, but the H-duality theorem, which we soon state as Fact~\ref{prop:H-duality}, establishes a non-trivial property between the H-dual FSFOM pair.

\begin{fact}\label{prop:H-duality}
\emph{[H-duality theorem, Informal \cite[Theorem~1]{kim2023timereversed}]}
Let $\cA$ and $\cB$ be FSFOMs that are H-duals of each other and $\alpha>0$. Then the following are equivalent.
\begin{itemize}
    \item $f(x_N)-f_\star\leq \frac{\alpha L}{2}\norm{x_0-x_\star}^2$ for $\cA$ can be proved with primal energy function structure.
    \item $\frac{1}{2L}\norm{\nabla f(x_N)}^2 \leq \alpha\left(f(x_0)-f_\star\right)$ for $\cB$ can be proved with dual energy function structure.
\end{itemize}
\end{fact}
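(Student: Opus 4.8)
The plan is to make ``provable with energy function structure'' precise and then reduce both statements to one linear-algebraic feasibility condition on which the anti-transpose $H\mapsto H^A$ acts transparently. First I would recall that a \emph{primal energy function} proof for $\cA$ (matrix $H$) means there are nonnegative multipliers and a potential sequence $\mathcal{U}_k=A_k(f(x_k)-f_\star)+\tfrac{L}{2}\norm{z_k-x_\star}^2$ with $A_0=0$ (as forced by the form of the guarantee), $z_0=x_0$, $1/A_N=\alpha$, and $z_k$ affine in the past gradients through $H$, such that each one-step gap $\mathcal{U}_k-\mathcal{U}_{k+1}$ is a nonnegative combination of the cocoercivity inequalities \eqref{eqn::coco_with_bregman} at pairs drawn from $\{x_0,\dots,x_N,x_\star\}$. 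Telescoping then writes $\tfrac{\alpha L}{2}\norm{x_0-x_\star}^2-(f(x_N)-f_\star)$ as one fixed nonnegative combination of the inequalities $Q_{ij}:=-D_f(x_i,x_j)+\tfrac{1}{2L}\norm{\nabla f(x_i)-\nabla f(x_j)}_*^2\le 0$. Because all differences $x_i-x_j$ and $x_0-x_\star$ are affine in the gradient vector $g=(\nabla f(x_0),\dots,\nabla f(x_N))$ via $H$, this is an identity between quadratic forms in $g$, which I would package as: a symmetric matrix $M(H,\alpha,\lambda)$ is positive semidefinite, together with a linear constraint on $\lambda\ge 0$ that makes all function-value and cross terms cancel.

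Second, I would run the identical reduction for the \emph{dual energy function} proof of $\cB$ (matrix $H^A$), whose potentials $\mathcal{V}_k$ mix $f(x_k)-f_\star$ with a weighted gradient-norm term, terminating at $\tfrac{1}{2L}\norm{\nabla f(x_N)}_*^2$ and starting from $\alpha(f(x_0)-f_\star)$. This yields a second matrix $M'(H^A,\alpha,\mu)$ that must be PSD with its own linear constraint on $\mu\ge 0$.

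Third — the heart of the matter — I would exhibit a bijection between the two feasibility conditions. The structural facts are: (i) the cocoercivity family is invariant under the time reversal $x_i\mapsto x_{N-i}$, in the sense that $Q_{ij}$ is carried to $Q_{N-j,\,N-i}$ under the relabeling, with $x_\star$ fixed; and (ii) the Fenchel/Legendre symmetry swaps the initial displacement $x_0-x_\star$ with the terminal gradient $\nabla f(x_N)$, and $f(x_N)-f_\star$ with $f(x_0)-f_\star$, leaving $\alpha$ fixed. Composing these two involutions sends the FSFOM with matrix $H$ to the one with \emph{exactly} $H^A_{i,j}=H_{N-j+1,N-i+1}$ — this is why the anti-diagonal transpose is the correct operation — and sends the multiplier $\lambda$ to its anti-transpose $\mu=\lambda^A$. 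Since the composite acts on the quadratic forms by congruence with the reversal permutation (composed with the Fenchel change of variables), one gets $M(H,\alpha,\lambda)\succeq 0$ if and only if $M'(H^A,\alpha,\lambda^A)\succeq 0$, and the two linear constraints transform into one another. This gives the equivalence, with the same $\alpha$.

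I expect the obstacle to be twofold. The minor issue is choosing the class of ``energy function structures'' narrowly enough that the reduction to a clean matrix condition is faithful, yet broadly enough to cover the known accelerated analyses (\ref{eqn::AMD}/OGM and \ref{eqn::AMD-G}/OGM-G); the informal statement hides this, and the rigorous version of \cite{kim2023timereversed} pins it down. The genuine difficulty is the bookkeeping in the third step: after the index reversal and the Fenchel substitution one must check that the linear \emph{equality} constraints — those forcing the telescoped identity to hold exactly rather than merely as an inequality — are carried to each other and that nonnegativity of the multipliers is preserved. Matching the off-by-one indices in the anti-transpose against the boundary potentials $\mathcal{U}_0,\mathcal{U}_N$ and $\mathcal{V}_0,\mathcal{V}_N$ is where all the care is required.
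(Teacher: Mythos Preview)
The paper does not prove Fact~\ref{prop:H-duality}; it is quoted as prior work from \cite{kim2023timereversed}, and the paper explicitly defers the meaning of ``primal/dual energy function structure'' to Section~\ref{ss::mirror_duality}. What the paper \emph{does} prove is the generalization, Theorem~\ref{thm::mirror_duality_main} (mirror duality), so the right comparison is between your proposal and that proof.

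Your high-level plan --- reduce ``provable with energy structure'' to the nonnegativity of a residual quadratic form and then exhibit a bijection mapping one residual to the other --- is the correct shape, and matches the paper. But the execution diverges in two substantive ways. First, your energy-function class (arbitrary nonnegative multipliers on all cocoercivity inequalities over $\{x_0,\dots,x_N,x_\star\}$, then a PSD condition $M(H,\alpha,\lambda)\succeq 0$) is strictly broader than what the paper actually uses. The paper fixes a \emph{one-parameter} family of energy functions indexed only by a positive nondecreasing sequence $\{u_i\}_{i=0}^N$: each $\mathcal{U}_{k+1}-\mathcal{U}_k$ is a prescribed combination of the cocoercivity inequality at the consecutive pair $(x_k,x_{k+1})$ and the convexity inequality at $(x,x_{k+1})$, with no free multipliers at all. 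The dual structure is parametrized by $v_i=1/u_{N-i}$. The ``provability'' condition is then simply $\inf \mathbf{U}_\cA\ge 0$ versus $\inf \mathbf{V}_\cB\ge 0$, where $\mathbf{U}_\cA,\mathbf{V}_\cB$ are explicit functions of the gradients and mirror-gradients (equations \eqref{eqn::appendix_U_first} and \eqref{eqn::appendix_V_first}). So there is no SDP search over $\lambda$; the equivalence is between two fixed forms.

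Second --- and this is where your proposal has a genuine gap --- the bijection is not ``time reversal composed with a Fenchel/Legendre symmetry.'' Your description of that symmetry (``swaps $x_0-x_\star$ with $\nabla f(x_N)$'') is too vague to be an operation on the variables, and no congruence by a permutation alone will work. The paper's change of variables is the explicit, $\{u_i\}$-dependent linear map \eqref{eqn::appendix_mirror_transform}:
\[
D_i=B_{N-i},\qquad C_0=u_N A_N,\qquad C_{N-i}-C_{N-i-1}=u_i\,(A_i-A_{i+1}),
\]
and the proof is a direct calculation showing $\mathbf{U}_\cA(\{A_i\},\{B_i\})=\mathbf{V}_\cB(\{C_i\},\{D_i\})$ pointwise. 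The rescaling by $u_i$ on the gradient differences is exactly what turns the $u_k$-weighted norm terms into the $v_{k+1}$-weighted ones and makes the inner-product terms match after the anti-transpose acts on the step sizes; a pure index reversal would miss this. Your sketch anticipates that the bookkeeping is the hard part, but the missing idea is that the bijection must be tailored to the energy-function weights, not just to the algorithm indices.
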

We will clarify the meaning of  `primal and dual energy function structures' in Section~\ref{ss::mirror_duality} along with our main results. The H-duality theorem provides a sufficient condition that ensures an FSFOM defined by $H\in\RR^{N\times N}$ with a convergence guarantee on $(f(x_N)-f_{\star})$ can be ``H-dualized'' to obtain an FSFOM defined by $H^A\in\RR^{N\times N}$ with a convergence guarantee on $\|\nabla f(x_N)\|^2$, and vice versa. 

\paragraph{Instances of H-duality.}  OGM \cite{kim2016optimized} is an FSFOM with guarantee
\[
f(x_N)-f_\star\le \frac{1}{\zeta_N^2}\frac{L\norm{x_0-x_\star}_2^2}{2},
\]
and OGM-G \cite{kim2021optimizing} is an FSFOM with guarantee
\[
\frac{1}{2L}\|\nabla f(x_N)\|^2_2\le \frac{1}{\zeta_N^2}\left(f(x_0)-f_\star \right),
\]
where $\{\zeta_i\}_{i=0}^{N}$ is defined by $\zeta_0=1$, $\zeta_{i+1}^2 - \zeta_{i+1} =\zeta_i^2$ for $i=0,\dots,N-2$ and $\zeta_{N}^2 - \zeta_N = 2\zeta_{N-1}^2$. It turns out that OGM and OGM-G are H-duals of each other, and their guarantees imply each other through the H-duality theorem (with $\alpha=1/\zeta_N^2$). 

As another example, the standard gradient descent, $x_{k+1}=x_k-\frac{1}{L}\nabla f(x_k)$ for $k=0,\dots,N-1$, has guarantees
\[
f(x_N)-f_\star\le \frac{1}{2N+1}\frac{L\norm{x_0-x_\star}^2_2}{2}\]
and
\[
\frac{1}{2L}\|\nabla f(x_N)\|^2_2\le \frac{1}{2N+1}\left(f(x_0)-f_\star \right).
\]
Gradient descent (with constant stepsize) is ``self-H-dual'', i.e., the H-dual of gradient descent is gradient descent itself, and these two guarantees imply each other through the H-duality theorem (with $\alpha=1/(2N+1)$).
 
Recently, \cite{DasGupta2024} numerically observed that gradient-descent type methods ($x_{i+1}=x_i-\frac{h_i}{L}$ for $i=0,\dots,N-1$) with appropriately chosen step sizes can exhibit a faster convergence rate than $\mathcal{O}(1/N)$. In a series of works \cite{grimmer2023accelerated,grimmer2024accelerated,grimmer2024provably,altschuler2023acceleration1,altschuler2023acceleration2}, the rate $\mathcal{O}(1/N^{1.2716})$ was theoretically established. The latest of this line of work \cite{grimmer2024accelerated}, which presents the guarantee with the best constant, also presents an H-duality phenomenon: If $N=2^n-1$ for some $n \in \mathbb{N}$, the method $x_{i+1}=x_i-\tfrac{h_i^{\mathrm{(left)}}}{L} \nabla f(x_i)$ with $\{h_i^{\mathrm{(left)}}\}_{i=0}^{N-1}$ has a guarantee
\[
f(x_N)-f_\star \le \frac{1}{r_N}\frac{L\norm{x_0-x_\star}^2_2}{2},
\]
where $r_N \approx N^{\log_2(1+\sqrt{2})}$, and its H-dual method, $x_{i+1}=x_i-\tfrac{h_{N-1-i}^{\mathrm{(left)}}}{L} \nabla f(x_i)$, has a guarantee
\[
\frac{1}{2L}\|\nabla f(x_N)\|^2_2 \le \frac{1}{r_N}\left(f(x_0)-f_\star \right).
\]
Although this result is not a direct instance of the existing H-duality theorem, the symmetry points to the existence of a broader H-duality phenomenon beyond what is elucidated in \cite[Theorem~1]{kim2023timereversed}.

\paragraph{Mirror-descent-type methods.}
The framework of mirror descent has been a cornerstone for designing efficient methods for optimization problems with structures that are well-behaved in non-Euclidean geometries \cite{nemirovsky1983problem,beck2003mirror,ben2001ordered,Juditsky20105FO,Censor1992proximal,Teboulle1992proximal,Jonathan1993nonlinear}. A mirror-descent-type method uses a distance-generating function $\phi\colon X\rightarrow\overline{\mathbb{R}}$, which gives rise to the Bregman divergence $D_\phi(\cdot,\cdot)$, for minimizing a convex function $f\colon X\rightarrow\overline{\mathbb{R}}$. Consider $N$ iterations of such a method.

The standard mirror descent method attains a $\cO(D_\phi(x_\star,x_0) /\sqrt{N})$ rate on the best iterate for nonsmooth convex $f$ with bounded subgradients and a standard set of assumptions.
For differentiable convex $f$, 
 \cite{doi:10.1287/moor.2016.0817,doi:10.1137/16M1099546,VanNguyen2017} presented the relative smoothness condition, which relaxes the strong convexity of distance generating function, and achieved a $\cO(D_\phi(x_\star,x_0) /N)$ rate on the last iterate under relative smoothness. Under a set of stronger assumptions, accelerating the rate on function value is possible: the Improved Gradient Algorithm achieves a $\cO(D_\phi(x_\star,x_0) /N^2)$ rate for constrained smooth optimization problems within Euclidean spaces \cite{auslender2006interior};
 with a uniformly convex distance-generating function a faster rate than $\cO(D_\phi(x_\star,x_0) /N)$ 
 can be achieved with convex functions that are smooth with respect to the $\ell_p$-norm \cite{alexandre2018optimal};
a method using an auxiliary regularizer function \cite{tseng2008}; and a method proposed from an ODE perspective on the linear coupling \cite{krichene2015amd}. The prior accelerated mirror-descent-type method most relevant to our work is the accelerated Bregman proximal gradient method, which attains a $\cO(D_\phi(x_\star,x_0) /N^2)$ rate on function values, under the setup of smooth $f$ and strongly convex $\phi$ with respect to any norm $\|\cdot\|$ \cite{d2021acceleration}. 

Recently, connections between various sampling algorithms and entropic mirror descent have been established, which utilize negative entropy as a distance-generating function: \cite{chopin2023connection} showed the equivalence with tempering and \cite{karimi2023sinkhorn} demonstrated the connection with generalized Sinkhorn iterations.

\paragraph{Mirror-descent-type methods reducing gradient magnitude. }
 There has been some recent attention toward mirror-descent-type methods that efficiently reduce gradient magnitude. Dual preconditioned gradient descent \cite{doi:10.1137/19M130858X} efficiently reduces the gradient measurement under the relative smoothness condition in the dual space, a condition later characterized in \cite{doi:10.1137/21M1465913}. This method was extended to the non-convex and composite setups \cite{laude2023anisotropic}. On the other hand, \cite{diakonikolas2023complementary} used a regularization technique to develop methods to reduce the magnitude of the gradient, measured by the dual norm. In Section~\ref{sec::disc_md}, we review dual preconditioned gradient descent in further detail as it serves as the foundation for the development of \ref{eqn::AMD-G}.

\subsection{Contributions and organization}

\paragraph{Contributions.} 
This work presents two main contributions, one conceptual and one algorithmic. The first main contribution is mirror duality, presented as Theorem~\ref{thm::mirror_duality_main}. The H-duality presented in \cite{kim2023timereversed} made progress by establishing a duality principle between the task of reducing function value and the task of reducing gradient magnitude. Mirror duality extends H-duality to the mirror descent setup and provides a further richer understanding.

Our second main contribution is the method \ref{eqn::AMD-G} and its convergence analysis, stated as Corollary~\ref{thm::AMD-G}. To the best of our knowledge, \ref{eqn::AMD-G} is the first mirror-descent-type method for reducing gradient magnitude at an accelerated rate. The generality of \ref{eqn::AMD-G} accommodates optimization problems with structures that are well-behaved in non-Euclidean geometries and allows us to measure the gradient magnitude with a smooth convex function of our choice, not necessarily a norm. The applications of Section~\ref{s:applications} illustrate the practical utility of \ref{eqn::AMD-G}.

\paragraph{Organization.} 
Section~\ref{sec::disc_md} reviews \ref{eqn::MD} and \ref{eqn::dual_MD} and points out a symmetry between these two mirror-descent-type methods. 
Section~\ref{sec::amd} reviews \ref{eqn::AMD}, an accelerated mirror-descent-type method that reduces the function value. The specific structure of the convergence analysis of \ref{eqn::AMD} is essential for obtaining our main result.

Section~\ref{ss:cfom} quickly establishes some definitions.
Section~\ref{ss::mirror_duality} formally presents the mirror duality theorem, our first main contribution. Section~\ref{ss:dual-amd} presents the \ref{eqn::AMD-G} method, our second main contribution, obtained by applying the mirror duality theorem to \ref{eqn::AMD}. Section~\ref{s:applications} presents applications illustrating the practical utility of \ref{eqn::AMD-G}. Section~\ref{ss:h-duality-relation} compares the prior results on H-duality with our generalized framework of mirror duality.
Finally, Section~\ref{s:conclusion} concludes the paper and provides some discussion on future directions.

\section{Review of dual mirror descent and accelerated mirror descent}
\subsection{\ref{eqn::MD} and \ref{eqn::dual_MD}}
\label{sec::disc_md}

We provide a brief overview of mirror descent (\ref{eqn::MD}) and dual-mirror descent (\ref{eqn::dual_MD}). The relationship between these two methods presents the following insight: Let $f$ be the objective function and $\phi$ be the distance-generating function. Then, \ref{eqn::MD} is a method finding a $y\in X^*$ minimizing $f(\nabla \phi^*(y))$ and by swapping the roles of $f$ and $\phi^*$, we get \ref{eqn::dual_MD}, which finds an $x\in X $ minimizing $\phi^*(\nabla f(x))$. 

\paragraph{Mirror descent.} Let $X$ be a reflexive Banach space, $C \subseteq X$ be a nonempty closed convex set, and  $f\colon X \to \overline{\mathbb{R}}$ be a Legendre convex function. To state \ref{eqn::MD}, consider the problem
\begin{align*}
\begin{array}{ll}
\underset{x \in C}{\mbox{minimize}}&\quad f(x).
\end{array}
\end{align*} 
Assume that $x_\star \in \argmin_{x \in C} f(x) $ exists. Given an approximate solution $x$, we measure its suboptimality via
\[
f(x)-f(x_\star).
\]
To obtain $x\in C$ with small $f(x)-f(x_\star)$, consider a Legendre convex function $\phi\colon X\to\overline{\mathbb{R}}$ such that $\overline{\dom \phi}=C$. For $\alpha>0$ and any starting point $y_0\in \inte \dom \phi^*$, the Mirror Descent (MD) method is 
\begin{equation}
\begin{aligned}
y_{k+1} &= y_k - \alpha\nabla f(x_k)\\
x_{k+1} &= \nabla \phi^{*}(y_{k+1}),
     \end{aligned}
\tag{MD} \label{eqn::MD}
\end{equation}
for $k=0,1,\dots, N-1$, and $x_0=\nabla  \phi^*(y_0)$. We view $x_N$ as the output of \ref{eqn::MD}.
Consider the following additional assumption regarding $f$ and $\phi$. 
\begin{itemize}
    \item[(i)] For some $\lambda>0$, $\lambda\phi-f$ is convex on $\inte \dom \phi$. 
    \item[(ii)] $\inte \dom \phi \subseteq \inte \dom f$.
\end{itemize}
Under the above assumption, we have the following convergence guarantee for $x_N$.
\begin{fact}\cite[Theorem~1]{Dragomir2021}
\label{fact:md-bound}
Let $X$ be a reflexive Banach space and $f,\phi \colon X \to \overline{\mathbb{R}}$ Legendre convex functions. Then the iterates of \ref{eqn::MD} are well defined. Assume (i) and (ii). If we further assume that $x_\star \in\dom \phi$, then for $\alpha\in (0,\lambda]$,
\begin{align*}
    f(x_N)-f(x_\star) \leq \frac{1}{\alpha N}D_\phi(x_\star ,x_0).
\end{align*}
\end{fact}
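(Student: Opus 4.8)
The plan is to run the standard mirror-descent potential-function argument with the potential $k\mapsto D_\phi(x_\star,x_k)$, obtain a telescoped bound on $\sum_{j=1}^{N}\bigl(f(x_j)-f(x_\star)\bigr)$, and then upgrade this average-iterate estimate to the stated last-iterate estimate via a descent inequality. Throughout I would use that $\phi$ is Legendre convex, so that $\nabla\phi\colon\inte\dom\phi\to\inte\dom\phi^*$ and $\nabla\phi^*\colon\inte\dom\phi^*\to\inte\dom\phi$ are mutually inverse bijections \cite{Bauschke2001ESSENTIALSE}; in particular each iterate satisfies $\nabla\phi(x_k)=y_k$, and $\nabla f(x_k)$ is well defined because $x_k=\nabla\phi^*(y_k)\in\inte\dom\phi\subseteq\inte\dom f$ by assumption (ii) and a Legendre (hence essentially smooth) function is differentiable throughout the interior of its domain.

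The first concrete step is the three-point identity, a direct expansion of the definition of the Bregman divergence,
\[
D_\phi(x_\star,x_k)-D_\phi(x_\star,x_{k+1})-D_\phi(x_{k+1},x_k)=\inner{\nabla\phi(x_{k+1})-\nabla\phi(x_k)}{x_\star-x_{k+1}}.
\]
Since the \ref{eqn::MD} update gives $\nabla\phi(x_{k+1})-\nabla\phi(x_k)=y_{k+1}-y_k=-\alpha\nabla f(x_k)$, this rearranges into the per-step identity
\[
D_\phi(x_\star,x_k)-D_\phi(x_\star,x_{k+1})=D_\phi(x_{k+1},x_k)+\alpha\inner{\nabla f(x_k)}{x_{k+1}-x_\star}.
\]

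Next I would lower-bound the right-hand side. Splitting $x_{k+1}-x_\star=(x_{k+1}-x_k)+(x_k-x_\star)$, convexity of $f$ gives $\inner{\nabla f(x_k)}{x_k-x_\star}\ge f(x_k)-f(x_\star)$, while the identity $\inner{\nabla f(x_k)}{x_{k+1}-x_k}=f(x_{k+1})-f(x_k)-D_f(x_{k+1},x_k)$, combined with the relative-smoothness hypothesis (i), which bounds $D_f(x_{k+1},x_k)$ by a multiple of $D_\phi(x_{k+1},x_k)$, turns the per-step identity into
\[
D_\phi(x_\star,x_k)-D_\phi(x_\star,x_{k+1})\ge\alpha\bigl(f(x_{k+1})-f(x_\star)\bigr),
\]
the step-size bound on $\alpha$ being exactly what keeps the residual coefficient of $D_\phi(x_{k+1},x_k)$ nonnegative. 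Telescoping over $k=0,\dots,N-1$, dropping the nonnegative term $D_\phi(x_\star,x_N)$, and using $x_\star\in\dom\phi$ for finiteness gives $\alpha\sum_{j=1}^{N}\bigl(f(x_j)-f(x_\star)\bigr)\le D_\phi(x_\star,x_0)$. To turn this average-iterate bound into a bound on $f(x_N)$, I would separately establish the monotonicity $f(x_{k+1})\le f(x_k)$: expanding $f(x_{k+1})=f(x_k)+\inner{\nabla f(x_k)}{x_{k+1}-x_k}+D_f(x_{k+1},x_k)$ and substituting $\nabla f(x_k)=-\tfrac1\alpha\bigl(\nabla\phi(x_{k+1})-\nabla\phi(x_k)\bigr)$, so that $\inner{\nabla f(x_k)}{x_{k+1}-x_k}=-\tfrac1\alpha\bigl(D_\phi(x_{k+1},x_k)+D_\phi(x_k,x_{k+1})\bigr)$, assumption (i) and the bound on $\alpha$ make every surviving Bregman term appear with a sign that forces $f(x_{k+1})\le f(x_k)$. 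Hence $f(x_j)\ge f(x_N)$ for all $j\le N$, so $\alpha N\bigl(f(x_N)-f(x_\star)\bigr)\le\alpha\sum_{j=1}^{N}\bigl(f(x_j)-f(x_\star)\bigr)\le D_\phi(x_\star,x_0)$, which is the claim.

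I expect the main obstacle to be not any single estimate but the functional-analytic bookkeeping in the Legendre/Banach setting: one must justify that $\nabla\phi^*$ is single-valued and inverse to $\nabla\phi$ on the relevant sets, that $D_{\phi^*}(y_k,y_{k+1})=D_\phi(x_{k+1},x_k)$ (the Fenchel--Young equality for $\phi$), and, most delicately, that every $y_k$ stays in $\inte\dom\phi^*$ so that the recursion and all the Bregman identities are even defined. For this well-definedness, which is asserted as part of the statement, I would invoke the corresponding part of \cite[Theorem~1]{Dragomir2021} and the structure theory of essentially smooth/Legendre functions rather than reprove it; the only ingredient beyond the textbook average-iterate mirror-descent estimate is the monotonicity $f(x_{k+1})\le f(x_k)$, which is what pins the bound to the last iterate.
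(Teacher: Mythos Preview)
The paper does not prove this Fact itself; it is quoted from \cite[Theorem~1]{Dragomir2021} without any argument, so there is no in-paper proof to compare against. Your proposal is the standard relative-smoothness mirror-descent analysis---three-point identity, per-step decrease of the potential $D_\phi(x_\star,\cdot)$, telescoping, and then the descent inequality $f(x_{k+1})\le f(x_k)$ to upgrade from an average-iterate to a last-iterate bound---which is exactly the route taken in the cited reference and is correct.

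One caveat worth writing out rather than leaving implicit: at the point where you say ``the step-size bound on $\alpha$ being exactly what keeps the residual coefficient of $D_\phi(x_{k+1},x_k)$ nonnegative'', assumption (i) as stated ($\lambda\phi-f$ convex) gives $D_f(x_{k+1},x_k)\le\lambda\,D_\phi(x_{k+1},x_k)$, so the residual coefficient is $1-\alpha\lambda$, and nonnegativity requires $\alpha\le 1/\lambda$, not $\alpha\le\lambda$. The same discrepancy appears in your monotonicity step. This is a transcription slip in the paper's restatement of \cite{Dragomir2021} (where the step size and the relative-smoothness constant are reciprocals of one another), not a flaw in your argument; but your deliberate vagueness at that spot concealed the mismatch rather than exposing it. Deferring the well-definedness of the iterates ($y_k\in\inte\dom\phi^*$) to the cited source is reasonable, since the paper does the same.
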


\paragraph{Dual mirror descent.} Consider the problem
\begin{align*}
\begin{array}{ll}
\underset{x \in \dom \nabla f}{\mbox{find}}&\quad 0=\nabla f(x).
\end{array}
\end{align*} 
This problem is equivalent to the previous one,  $\min_{x\in C}f(x)$, if $C=X$ and $f$ is differentiable ($\dom \nabla f =X$). Given an approximate solution $x$ for this problem, we measure its suboptimality via
\[
\psi^*(\nabla f(x)),
\]
where $\psi^*\colon X^*\rightarrow \overline{\mathbb{R}}$ is a Legendre convex function
such that $\psi^*(0)=0$ and $0\in X^*$ is the unique minimizer of $\psi^*$. So, $\psi^*$ quantifies the magnitude of $\nabla f(x)$.\footnote{Every stationary point is a global function minimum in convex optimization, so $\psi^*(\nabla f(x))=0$ is equivalent to $f(x)-\inf_{x\in X}f(x)=0$. However, [$\psi^*(\nabla f(x))$ being small] is not equivalent to [$f(x)-\inf_{x\in X}f(x)$ being small] in the sense that for any $\varepsilon>0$ and $\psi^*$, there is an $f$ such that $\psi^*(\nabla f(x))<\varepsilon$ but $f(x)-\inf_{x\in X}f(x)>1$.
Even though the solution set is unchanged, changing the suboptimality measure leads to different efficient methods for finding approximate solutions.} Dual preconditioned gradient descent \cite{doi:10.1137/19M130858X}, formulated as follows, efficiently reduces $\psi^*(\nabla f(x))$. For $\alpha>0$, starting point $q_0 \in\inte \dom f$ and $r_0=\nabla f(q_0)$,
\begin{equation}
\begin{aligned} 
     q_{k+1} &= q_k - \alpha\nabla \psi^{*}(r_k)\\
     r_{k+1} &= \nabla f(q_{k+1}),
\end{aligned}
\tag{dual-MD} \label{eqn::dual_MD}
\end{equation}
for $k=0,\dots,N-1$. We will refer to this method as \ref{eqn::dual_MD}. The reason for this alternative naming will be made clear in Section~\ref{sec::mirror_duality}. 

The output of \ref{eqn::dual_MD} is $q_N$. Consider the following additional assumption regarding $f$ and $\psi$. 
\begin{itemize}
    \item[(i)'] For some $\tau>0$, $\tau f^*-\psi^*$ is convex on $\inte \dom f^*$. 
    \item[(ii)'] $\inte \dom f^* \subseteq \inte\dom \psi^*$.
\end{itemize}
Under this assumption, we have the following convergence guarantee for $q_N$.
\begin{fact}\cite[Theorem~3.9]{doi:10.1137/19M130858X}
\label{cor:dual-md} Let $X$ be a reflexive Banach space and $f,\psi \colon X \to \overline{\mathbb{R}}$ be Legendre convex functions. Assume (i)' and (ii)'. Then, the iterates of \ref{eqn::dual_MD} are well defined. 
If we further assume that $\psi^*(0)=0$ and $0\in X^*$ is the unique minimizer of $\psi^*$, then for $\alpha\in (0,\tau]$,
    \[
   \psi^*(\nabla f(q_N))=   \psi^*(r_N) \leq
   \frac{1}{\alpha N}\big(f(q_0)-\inf_{x\in X}f(x)\big). 
    \]
    (If $\inf_{x\in X}f(x)=-\infty$, then the right-hand-side is $+\infty$ and the bound vacuously holds.)
\end{fact}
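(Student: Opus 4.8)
The plan is to obtain this statement as a direct specialization of the mirror descent guarantee, Fact~\ref{fact:md-bound}, exploiting the fact — stated informally in the text preceding the claim — that \ref{eqn::dual_MD} is literally \ref{eqn::MD} run on the pair consisting of objective function $\psi^*$ and distance-generating function $f^*$, with the roles of the objective and the conjugate DGF interchanged. The only external input beyond Fact~\ref{fact:md-bound} is the classical duality of Legendre functions \cite{Bauschke2001ESSENTIALSE}: if $h$ is Legendre convex, then so is $h^*$, with $h^{**}=h$ and $\nabla h^*=(\nabla h)^{-1}$.

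First I would set up the reduction. Applying the Legendre duality to $f$ and $\psi$ shows $f^*$ and $\psi^*$ are Legendre, $(f^*)^*=f$, and $\nabla f^*=(\nabla f)^{-1}$. Instantiate \ref{eqn::MD} with ``objective'' $\psi^*\colon X^*\to\overline{\mathbb{R}}$, ``DGF'' $f^*\colon X^*\to\overline{\mathbb{R}}$ (so the mirror variable lives in $X^{**}=X$), step size $\alpha$, and starting point $y_0=q_0\in\inte\dom f=\inte\dom\phi^*$. Then $x_0=\nabla\phi^*(y_0)=\nabla f(q_0)=r_0$, and the recursion $y_{k+1}=y_k-\alpha\nabla\psi^*(x_k)$, $x_{k+1}=\nabla(f^*)^*(y_{k+1})=\nabla f(y_{k+1})$ is, under the identification $y_k\leftrightarrow q_k$, $x_k\leftrightarrow r_k$, exactly \ref{eqn::dual_MD}. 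In particular the well-definedness of the \ref{eqn::dual_MD} iterates is inherited from the well-definedness clause of Fact~\ref{fact:md-bound}, which requires only that $f^*,\psi^*$ be Legendre.

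Next I would check that the hypotheses of Fact~\ref{fact:md-bound} translate verbatim. Hypothesis (i), ``$\lambda\phi-f$ convex on $\inte\dom\phi$'', with $\phi=f^*$ and objective $\psi^*$, reads ``$\lambda f^*-\psi^*$ convex on $\inte\dom f^*$'', i.e. (i)' with $\lambda=\tau$; hypothesis (ii) becomes (ii)'. The minimizer ``$x_\star$'' of the objective $\psi^*$ is, by assumption, the point $0\in X^*$, which lies in $\dom\phi=\dom f^*$ precisely when $f^*(0)=-\inf_{x\in X}f(x)<\infty$; if $\inf_{x\in X}f(x)=-\infty$ the asserted bound is vacuous, so assume $\inf_{x\in X}f(x)>-\infty$. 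Then $x_\star=0\in\dom\phi$, $0\in\argmin_{\overline{\dom f^*}}\psi^*$, and Fact~\ref{fact:md-bound} yields, for $\alpha\in(0,\tau]$,
\[
\psi^*(r_N)-\psi^*(0)\le\frac{1}{\alpha N}\,D_{f^*}(0,r_0).
\]
To finish, simplify the right-hand side: since $r_0=\nabla f(q_0)$ and $\nabla f^*=(\nabla f)^{-1}$ we get $\nabla f^*(r_0)=q_0$, and since $f$ is differentiable, Fenchel--Young holds with equality, $f^*(r_0)=\langle r_0,q_0\rangle-f(q_0)$; hence
\[
D_{f^*}(0,r_0)=f^*(0)-f^*(r_0)-\langle\nabla f^*(r_0),\,0-r_0\rangle=f^*(0)+f(q_0)=f(q_0)-\inf_{x\in X}f(x).
\]
Together with $\psi^*(0)=0$ and $r_N=\nabla f(q_N)$ this is the claim.

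I do not expect a serious obstacle here, since Fact~\ref{fact:md-bound} does the analytic work and the reduction is an exact identity of algorithms; the points that need care are (a) tracking the canonical pairing and ambient spaces in $D_{f^*}$, which lives on $X^*\times X^*$ with $\nabla f^*$ taking values in $X^{**}=X$, so reflexivity is genuinely used, (b) the requirement $r_0\in\dom\nabla f^*$ so that $D_{f^*}(0,r_0)$ equals the finite expression above, and (c) the routine boundary bookkeeping around $\inf_{x\in X}f(x)=-\infty$. As an alternative, one could reprove the estimate directly by exhibiting a potential (e.g. $V_k=\alpha k\,\psi^*(r_k)+D_{f^*}(0,r_k)$, or a close variant) that is nonincreasing along \ref{eqn::dual_MD}, mirroring the proof of Fact~\ref{fact:md-bound}; but the reduction above is cleaner and makes the symmetry transparent.
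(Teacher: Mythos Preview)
Your reduction is correct and is exactly the approach the paper indicates: the paper does not give a formal proof of Fact~\ref{cor:dual-md} (it is cited from \cite{doi:10.1137/19M130858X}), but the paragraph immediately following it states that the bound ``is really an immediate consequence of \ref{eqn::MD} reducing $f(x_N)=f(\nabla\phi^*(y_N))$ as guaranteed by Fact~\ref{fact:md-bound}'' once one swaps $f$ with $\phi^*=\psi^*$. You have simply carried out that swap in full detail, including the identification $D_{f^*}(0,r_0)=f(q_0)-\inf_{x\in X}f(x)$ and the boundary case $\inf f=-\infty$, so there is nothing to add.
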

 
Interestingly, \ref{eqn::dual_MD} can be obtained by swapping $f$ with $\phi^*=\psi^*$ in \ref{eqn::MD}, and this symmetry between them leads to a crucial observation. 
 In \ref{eqn::dual_MD}, $\psi^*=\phi^*$ serves as the objective function, so the reduction of $\psi^*(\nabla f(q_N))$ in \ref{eqn::dual_MD} as guaranteed by Fact~\ref{cor:dual-md} is really an immediate consequence of \ref{eqn::MD} reducing $f(x_N)=f(\nabla \phi^*(y_N))$ as guaranteed by Fact~\ref{fact:md-bound}.
At this point, \ref{eqn::MD} and \ref{eqn::dual_MD} exhibit a certain symmetry, and one method reduces $f(\cdot)-f(x_\star)$ while the other ``dual'' method reduces $\psi^*(\nabla f(\cdot))$. In Section~\ref{sec::amd}, we state \ref{eqn::AMD}, which reduces $f(\cdot)-f(x_\star)$ at an $\mathcal{O}\left(1/N^2\right)$ rate under stronger assumptions than needed for \ref{eqn::MD}. This leads to a natural question: Can we ``dualize'' \ref{eqn::AMD} to obtain a mirror-descent-type method with an $\mathcal{O}\left( 1/N^2 \right)$ rate on $\psi^*(\nabla f(\cdot))$? 
For reasons that we explain later in Section~\ref{sec::mirror_duality}, merely interchanging the roles of $f$ and $\phi^*=\psi^*$ does not work for obtaining the dual counterpart of \ref{eqn::AMD}. The answer is obtained through \emph{mirror duality} that we present in Section~\ref{sec::mirror_duality}.

The versions of Facts~\ref{fact:md-bound} and \ref{cor:dual-md} presented in \cite{Dragomir2021} and \cite{doi:10.1137/19M130858X} rely on slightly weaker assumptions than we state. We make this simplification to focus on our goal of introducing the concept of swapping $f$ and $\phi^* = \psi^*$ in mirror-descent-type methods.

\paragraph{Connection with dual averaging.} Our formulation of \ref{eqn::MD} is closely related to dual averaging \cite{nesterov2009primal}. (In dual averaging, \emph{dual} refers to the duality between the primal $X$ and dual $X^*$ spaces, while in our 
\ref{eqn::dual_MD}, \emph{dual} refers to mirror duality.) Dual averaging and mirror descent are equivalent in the so-called unconstrained setup, which is what we consider. In the constrained setup, the optimization problem is constrained to a further subset of $C=\overline{\dom \phi}$, and in this constrained setup, mirror duality and dual averaging differ. The dual algorithms of our work and the notion of mirror duality do not seem to immediately generalize to the constrained setup, and studying this extension would be an interesting direction of future work.
\subsection{Accelerated Mirror Descent}
\label{sec::amd}
Again, consider the problem
\begin{align*}
\begin{array}{ll}
\underset{x \in C}{\mbox{minimize}}&\quad f(x),
\end{array}
\end{align*} 
where $X$ is a reflexive Banach space, $C\subseteq X$ is a nonempty closed convex set and $f\colon X\to\RR$ is a CCP function. Assume $x_\star \in \argmin_{x \in C} f(x) $ exists. Given an approximate solution $x$, we measure its suboptimality via $f(x)-f(x_\star)$. We use a CCP function $\phi\colon X\to\overline{\RR}$ such that $\overline{\dom \phi}=C$ for the distance-generating function.
Assume that
\begin{itemize}
     \item [\hypertarget{iiii}{(\hyperlink{iiii}{A1})}]
     $f$ is $L$-smooth; $\phi$ is $\sigma$-strongly convex; $L>0$, $\sigma>0$.
\end{itemize}

Let $N$ be a positive integer. Let $\{\theta_i\}_{i = -1}^N\subset \mathbb{R}$ be a non-decreasing sequence satisfying
\begin{align*}
    \theta_{-1}=0,\qquad
    \theta_{0}=1,\qquad\theta_{i}^2-\theta_{i} \leq \theta_{i-1}^2\,\, \text{for} \,\, 1\leq i\leq N-1,\qquad\theta_N =\theta_{N-1}.
\end{align*}
When $\theta_{i}^2-\theta_{i} =\theta_{i-1}^2$ for $i=1,\dots,N-1$, the sequence becomes exactly the same as the $\theta$-sequence of Nesterov's fast gradient method \cite{10029946121}, except for the final $\theta_N$. The variant of accelerated mirror descent \eqref{eqn::AMD} we consider is
    \begin{align}
        & y_{k+1} = y_k - \frac{\sigma}{L}\left(\theta_{k}^2-\theta_{k-1}^2\right) \nabla f(x_k)
    \label{eqn::AMD} \tag{AMD} \\
        & x_{k+1}=\frac{\theta_{k}^2}{\theta_{k+1}^2}x_k + \frac{\theta_{k+1}^2 -\theta_k^2}{\theta_{k+1}^2}\nabla \phi^{*}(y_{k+1}) +  \frac{\theta_k^2 - \theta_{k-1}^2}{\theta_{k+1}^2}\left(\nabla \phi^{*}(y_{k+1})-\nabla \phi^{*}(y_{k})\right)\nonumber
    \end{align}
for $k=0,1,\dots,N-1$, where $y_0 \in X^*$ is any starting point and $x_0=\nabla \phi^*(y_0)$.  We view $x_N$ as the output of \ref{eqn::AMD}.

\begin{fact}
\label{fact:amd}
Let $X$ be a reflexive Banach space and let $f\colon X\to\RR$ and $\phi\colon X\to\RR$ be CCP.%
\footnote{Our analysis does not require the distance-generating function $\phi$ to be differentiable. Prior work such as \cite{xiao2010dual} has also carried out mirror-descent-type analyses under non-differentiability of $\phi$.}
Assume \hyperlink{iiii}{(A1)}. Then, the iterates of \ref{eqn::AMD} are well defined and 
$x_k\in C$ for $k=0,\dots,N$. Furthermore, if $x\in \dom \phi$,
\begin{align*}
    f(x_N) - f(x) \leq \frac{LD_\phi(x,x_0) }{\sigma\theta_N^2}
       \stackrel{(\bullet)}{=}\mathcal{O}\left(\frac{LD_\phi(x,x_0)}{\sigma N^2}\right)
\end{align*}
where  $(\bullet)$ holds if $\theta_{i}^2-\theta_{i}= \theta_{i-1}^2$ for $0 \leq i\leq N-1$.
\end{fact}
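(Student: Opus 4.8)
The plan is a Lyapunov (energy-function) argument of the type used for accelerated gradient methods, carried out on the dual side so that no differentiability of $\phi$ is required.

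\textbf{Well-definedness and feasibility.} By \hyperlink{iiii}{(A1)} and Lemma~\ref{lemma:conjugate_of_sc_is_smooth}, $\phi^*\colon X^*\to\mathbb{R}$ is $\tfrac1\sigma$-smooth, hence real-valued and Fr\'echet differentiable on all of $X^*$, so $\nabla\phi^*(y_k)\in X$ is well-defined for every $k$; it attains the supremum defining $\phi^*(y_k)$ and therefore lies in $\dom\phi\subseteq\overline{\dom\phi}=C$. To see $x_k\in C$, I would introduce $u_k:=\theta_k^2 x_k-(\theta_k^2-\theta_{k-1}^2)\nabla\phi^*(y_k)$; the $x$-update together with $\theta_0=1$, $\theta_{-1}=0$, $x_0=\nabla\phi^*(y_0)$ gives $u_0=0$ and $u_{k+1}=u_k+(\theta_k^2-\theta_{k-1}^2)\nabla\phi^*(y_{k+1})$. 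Since $\theta$ is non-decreasing and $\sum_{j=0}^{k-1}(\theta_j^2-\theta_{j-1}^2)=\theta_{k-1}^2$, the resulting identity $\theta_k^2 x_k=u_k+(\theta_k^2-\theta_{k-1}^2)\nabla\phi^*(y_k)$ exhibits $x_k$ as a convex combination of the points $\nabla\phi^*(y_0),\dots,\nabla\phi^*(y_k)\in C$, so $x_k\in C$.

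\textbf{Energy function.} Fix $x\in\dom\phi$ and define, for $k=0,\dots,N$,
\[
\mathcal{E}_k:=\frac{\sigma\theta_{k-1}^2}{L}\bigl(f(x_k)-f(x)\bigr)+D_\phi\bigl(x,\nabla\phi^*(y_k)\bigr),
\]
where by the Fenchel--Young inequality the divergence term equals $\phi(x)+\phi^*(y_k)-\langle y_k,x\rangle\ge0$, so $\phi$ need not be differentiable. Since $\theta_{-1}=0$ and $x_0=\nabla\phi^*(y_0)$ we have $\mathcal{E}_0=D_\phi(x,x_0)$, and since $\theta_N=\theta_{N-1}$ the coefficient of $f(x_N)-f(x)$ in $\mathcal{E}_N$ equals $\tfrac{\sigma\theta_N^2}{L}$.

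\textbf{Dissipation and conclusion.} The heart of the proof is the inequality $\mathcal{E}_{k+1}\le\mathcal{E}_k$ for $0\le k\le N-1$. I would expand $\mathcal{E}_{k+1}-\mathcal{E}_k$, substitute the $y$-step $y_{k+1}-y_k=-\tfrac{\sigma}{L}(\theta_k^2-\theta_{k-1}^2)\nabla f(x_k)$ and the $x$-update, rewrite the change of the divergence term via $\phi^*(y_{k+1})-\phi^*(y_k)=\langle\nabla\phi^*(y_k),y_{k+1}-y_k\rangle+D_{\phi^*}(y_{k+1},y_k)$ and the $\tfrac1\sigma$-smoothness of $\phi^*$, and bound the function-value increments using the cocoercivity inequality \eqref{eqn::coco_with_bregman} and the convexity inequality \eqref{eqn::convex_with_bregman} for $f$ at suitable pairs of iterates. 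After collecting terms, $\mathcal{E}_k-\mathcal{E}_{k+1}$ is a nonnegative linear combination of these (individually nonnegative) inequalities, with the coefficients forced to be nonnegative precisely by the recursion $\theta_i^2-\theta_i\le\theta_{i-1}^2$. Telescoping $\mathcal{E}_N\le\cdots\le\mathcal{E}_0$ and discarding the nonnegative divergence term in $\mathcal{E}_N$ gives $\tfrac{\sigma\theta_N^2}{L}\bigl(f(x_N)-f(x)\bigr)\le\mathcal{E}_N\le\mathcal{E}_0=D_\phi(x,x_0)$, which is the claimed bound. For $(\bullet)$: when $\theta_i^2-\theta_i=\theta_{i-1}^2$ for all $i$, monotonicity of $\theta$ gives $\theta_i-\theta_{i-1}=\theta_i/(\theta_i+\theta_{i-1})>\tfrac12$, hence $\theta_N^2=\Omega(N^2)$ and the rate becomes $\mathcal{O}\!\left(LD_\phi(x,x_0)/(\sigma N^2)\right)$.

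\textbf{Main obstacle.} The delicate step is establishing $\mathcal{E}_{k+1}\le\mathcal{E}_k$: one must pick exactly which pairs of iterates to apply \eqref{eqn::coco_with_bregman} and \eqref{eqn::convex_with_bregman} to, and track how the dual gradient step (which moves $y_k$, hence $\nabla\phi^*(y_k)$) couples to the primal affine combination defining $x_{k+1}$, so that every cross term cancels and only nonnegative residuals survive. Index bookkeeping at the ends---$k=0$, where $\theta_{-1}=0$, and the last step, where $\theta_N=\theta_{N-1}$ carries the coefficient over cleanly---also needs care, and all Bregman terms must be handled on the $\phi^*$ side to accommodate possibly non-differentiable $\phi$.
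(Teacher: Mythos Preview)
Your feasibility argument is essentially the paper's: both exhibit $x_k$ as a convex combination of $\nabla\phi^*(y_0),\dots,\nabla\phi^*(y_k)$, and your auxiliary sequence $u_k=\theta_k^2 x_k-(\theta_k^2-\theta_{k-1}^2)\nabla\phi^*(y_k)$ is just a compact way to obtain the explicit expansion the paper writes out. The asymptotic claim $(\bullet)$ is also handled the same way.

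The convergence argument, however, is organized differently. You propose the classical closed-form Lyapunov
\[
\mathcal{E}_k=\tfrac{\sigma\theta_{k-1}^2}{L}\bigl(f(x_k)-f(x)\bigr)+\phi(x)+\phi^*(y_k)-\langle y_k,x\rangle
\]
and aim to prove the one-step descent $\mathcal{E}_{k+1}\le\mathcal{E}_k$ by combining \eqref{eqn::convex_with_bregman}, \eqref{eqn::coco_with_bregman} for $f$, and the upper bound $D_{\phi^*}(y_{k+1},y_k)\le\tfrac{1}{2\sigma}\|y_{k+1}-y_k\|_*^2$ coming from $\tfrac1\sigma$-smoothness of $\phi^*$. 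This is the standard route (as in the prior work the paper compares to), and it does go through; you correctly flag the per-step cancellation as the delicate point but do not carry it out. The paper instead defines $\mathcal{U}_k$ \emph{recursively} by subtracting, at each step, the cocoercivity inequalities for $f$ \emph{and} for $\phi^*$ (note: the paper uses $D_{\phi^*}(y_k,y_{k+1})\ge\tfrac{\sigma}{2}\|\nabla\phi^*(y_{k+1})-\nabla\phi^*(y_k)\|^2$, not the smoothness upper bound you invoke). Thus $\{\mathcal{U}_k\}$ is nonincreasing by construction, and all the work is pushed into checking that a single global residual $\mathbf{U}_{\mathrm{AMD}}$---a function only of $\{\nabla f(x_i)\}$ and $\{\nabla\phi^*(y_i)\}$---is nonnegative, which the paper does by writing it as a sum over $k$ of terms of the form $\tfrac{\sigma\theta_k^2}{2L^2}\|a\|_*^2+\tfrac{\sigma}{2}\|b\|^2+\tfrac{\sigma(\theta_k^2-\theta_{k-1}^2)}{L}\langle a,b\rangle$ and applying AM--GM with the dual-norm inequality.

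What each buys: your route is the familiar accelerated-method Lyapunov proof and avoids the explicit residual computation. The paper's route is chosen deliberately because the residual $\mathbf{U}_{\mathrm{AMD}}$, as a function of gradients alone, is exactly the object that feeds into the mirror-duality theorem (Theorem~\ref{thm::mirror_duality_main}); a per-step descent proof would establish Fact~\ref{fact:amd} but would not directly yield the structure needed for Section~\ref{sec::mirror_duality}.
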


\begin{proof}
First, we show $x_k$ is a convex combination of $\{\nabla \phi^*(y_0),\dots,\nabla \phi^*(y_k)\}$. Then the fact $\nabla \phi^*\colon X^* \rightarrow C$ yields $x_k \in C$. For $k=0,\dots,N-1$,
\begin{align*}
   x_{k+1}&=\frac{\theta_{k}^2}{\theta_{k+1}^2}x_k + \frac{\theta_{k+1}^2 -\theta_{k-1}^2}{\theta_{k+1}^2}\nabla \phi^{*}(y_{k+1}) -  \frac{\theta_k^2 - \theta_{k-1}^2}{\theta_{k+1}^2} \nabla \phi^*(y_k) \\
    &= \frac{\theta_{k-1}^2}{\theta_{k+1}^2}x_{k-1} + \frac{\theta_{k+1}^2 -\theta_{k-1}^2}{\theta_{k+1}^2}\nabla \phi^{*}(y_{k+1}) + \frac{\theta_{k-1}^2 -\theta_{k-2}^2}{\theta_{k+1}^2}\nabla \phi^{*}(y_{k})\\
    &\quad -  \frac{\theta_{k-1}^2- \theta_{k-2}^2}{\theta_{k+1}^2} \nabla \phi^*(y_{k-1})
    \\
    &\qquad \qquad \qquad \qquad \vdots
    \\
    &= \frac{\theta_{k+1}^2 -\theta_{k-1}^2}{\theta_{k+1}^2}\nabla \phi^{*}(y_{k+1})  + \sum_{s = 1}^k  \frac{\theta_{s-1}^2 -\theta_{s-2}^2}{\theta_{k+1}^2}\nabla \phi^{*}(y_{s}).
\end{align*}
Now we provide the convergence analysis of \ref{eqn::AMD}.
\paragraph{Convergence analysis.}
First, define $\{u_i\}_{i=0}^{N}$ as $u_i=\frac{\sigma}{L}\theta_i^2$. Also define an energy function $\{\mathcal{U}_k\}_{k=0}^{N}$ recurrently as $\cU_0 = D_\phi(x,x_0) - \frac{\sigma}{L}\theta_{0}^2D_f(x,x_0)$ and
\begin{align*}
\quad \mathcal{U}_{k+1} = &\; \mathcal{U}_k - (u_{k+1}-u_k)D_f\left(x_k , x_{k+1}\right)\\
&- u_{k}\Big(
\underbrace{D_f\left(x_k,x_{k+1}\right)-\frac{1}{2L}\norm{\nabla f(x_k)-\nabla f(x_{k+1})}_{*}^2 }_{\ge 0\text{ by \eqref{eqn::coco_with_bregman}}}\Big) \\
 &\quad - \Big(
 \underbrace{D_{\phi^*}\left(y_k , y_{k+1} \right)-\frac{\sigma}{2}\norm{\nabla \phi^{*}(y_{k+1})-\nabla \phi^*(y_{k})}^2 }_{\ge 0\text{ by \eqref{eqn::coco_with_bregman}}}\Big)
\end{align*}
for $k=0,\dots,N-1$. Due to \eqref{eqn::convex_with_bregman} and \eqref{eqn::coco_with_bregman}, $\{\cU_{k}\}_{k=0}^{N}$ is non-increasing . If we can prove  $f(x_N) - f(x) \leq \frac{L}{\sigma \theta_N^2} \cU_N$,
the monotonicity of $\{\cU_{k}\}_{k=0}^{N}$ provides the convergence rate as follows:
\begin{align*}
  f(x_N)-f(x) \leq  \frac{L}{\sigma \theta_N^2}\cU_N\leq \dots  \leq \frac{L}{\sigma \theta_N^2}\cU_{0}  \leq \frac{L}{\sigma \theta_N^2}D_\phi(x,x_0).
\end{align*}
Now, the only  part left is proving the inequality $f(x_N) - f(x) \leq \frac{L}{\sigma \theta_N^2} \cU_N$. Define $\mathbf{U}_{\text{AMD}}$, which is the function of algorithm parameters and iterates of \ref{eqn::AMD}, via
\begin{align*}
    \mathcal{U}_N &= \frac{\sigma}{L}\theta_N^2 \left(f(x_N)-f(x) \right) + \underbrace{\phi(x) + \phi^*(y_N)-\inner{y_N}{x}}_{\geq 0 \text{ by \eqref{eqn::fenchel}}} \\
    & \quad +\inner{y_N-y_0+\sum_{i=0}^{N}\left(\frac{\sigma}{L}\theta_i^2-\frac{\sigma}{L}\theta_{i-1}^2\right)\nabla f(x_i)}{x }+ \mathbf{U}_{\text{AMD}}.
\end{align*}
Here we used $D_\phi(x,x_0)=\phi(x)+\phi^*(y_0)-\inner{y_0}{x}$. Observe that  
\begin{align*}
& y_N - y_0 + \sum_{i=0}^{N}\left(\frac{\sigma}{L}\theta_i^2-\frac{\sigma}{L}\theta_{i-1}^2\right) \nabla f(x_i) \\
= & -\sum_{k=0}^{N-1} (y_k - y_{k+1}) + \sum_{i=0}^{N}\left(\frac{\sigma}{L}\theta_i^2-\frac{\sigma}{L}\theta_{i-1}^2\right) \nabla f(x_i) \\
= & - \sum_{k=0}^{N-1} \frac{\sigma}{L}\left(\theta_{k}^2 - \theta_{k-1}^2 \right)\nabla f(x_k) + \sum_{i=0}^{N}\left(\frac{\sigma}{L}\theta_i^2-\frac{\sigma}{L}\theta_{i-1}^2\right) \nabla f(x_i) \\
= & 0.\quad \left(\because \theta_N = \theta_{N-1} \right).
\end{align*}
Therefore, it is enough to show $\mathbf{U}_{\text{AMD}} \geq 0$. We simplify $\mathbf{U}_{\text{AMD}}$ as follows: 
\begin{align*}
   &\; \mathbf{U}_{\text{AMD}}\\
   = &\frac{\sigma}{2L^2}\sum_{k=0}^{N-1} \theta_k^2 \norm{\nabla f(x_k) -\nabla f(x_{k+1})}^2_{*} + \frac{\sigma}{2} \sum_{k=0}^{N-1} \norm{\nabla \phi^*(y_k) -\nabla \phi^*(y_{k+1})}^2 \\
   & \quad + \sum_{k=0}^{N-1} \inner{y_k - y_{k+1}}{\nabla \phi^*(y_{k+1})}  - \frac{\sigma}{L}\sum_{k=0}^{N} \theta_k^2 \inner{\nabla f(x_k)-\nabla f(x_{k+1})}{x_k} \\
   = & \frac{\sigma}{2L^2}\sum_{k=0}^{N-1} \theta_k^2 \norm{\nabla f(x_k) -\nabla f(x_{k+1})}^2_{*} + \frac{\sigma}{2} \sum_{k=0}^{N-1} \norm{\nabla \phi^*(y_k) -\nabla \phi^*(y_{k+1})}^2 \\
   & \quad + \frac{\sigma}{L}\sum_{k=0}^{N-1} \inner{\left(\theta_k^2 - \theta_{k-1}^2 \right)\nabla f(x_k)}{\nabla \phi^*(y_{k+1})} - \frac{\sigma}{L}\sum_{k=0}^{N} \inner{\nabla f(x_k)}{\theta_k^2 x_k - \theta_{k-1}^2 x_{k-1}} \\
   = & \frac{\sigma}{2L^2}\sum_{k=0}^{N-1} \theta_k^2 \norm{\nabla f(x_k) -\nabla f(x_{k+1})}^2_{*} + \frac{\sigma}{2} \sum_{k=0}^{N-1} \norm{\nabla \phi^*(y_k) -\nabla \phi^*(y_{k+1})}^2 \\
   & \quad +\frac{\sigma}{L}\sum_{k=0}^{N-1} \inner{\left(\theta_k^2 - \theta_{k-1}^2 \right)\nabla f(x_k)}{\nabla \phi^*(y_{k+1})} - \frac{\sigma}{L}\sum_{k=0}^{N} \inner{\nabla f(x_k)}{(\theta_k^2 - \theta_{k-1}^2)\nabla \phi^*(y_k)} \\
   & \quad - \frac{\sigma}{L}\sum_{k=1}^{N} \inner{\nabla f(x_k)}{(\theta_{k-1}^2 - \theta_{k-2}^2)\left(\nabla \phi^*(y_{k}) - \nabla \phi^*(y_{k-1}) \right) }.
\end{align*}
Grouping the terms in the last two lines, we obtain
\begin{align*}
    &\frac{\sigma}{L}\sum_{k=0}^{N-1}\left(\theta_k^2 - \theta_{k-1}^2 \right)\big[\inner{\nabla f(x_k)}{\nabla \phi^*(y_{k+1})-\nabla \phi^*(y_k)}-\inner{\nabla f(x_{k+1})}{\nabla \phi^*(y_{k+1})-\nabla \phi^*(y_k)} \big] \\
    = &\; \frac{\sigma}{L} \sum_{k=0}^{N-1}\left(\theta_k^2 - \theta_{k-1}^2 \right) \inner{\nabla f(x_k)-\nabla f(x_{k+1})}{\nabla \phi^*(y_{k+1})-\nabla \phi^*(y_k)}.
\end{align*}
Thus 
\begin{align*}
    \mathbf{U}_{\text{AMD}} =&\; \frac{\sigma}{2L^2}\sum_{k=0}^{N-1} \theta_k^2 \norm{\nabla f(x_k) -\nabla f(x_{k+1})}^2_{*} + \frac{\sigma}{2} \sum_{k=0}^{N-1} \norm{\nabla \phi^*(y_k) -\nabla \phi^*(y_{k+1})}^2\nonumber\\
    & +\frac{\sigma}{L} \sum_{k=0}^{N-1}\left(\theta_k^2 - \theta_{k-1}^2 \right) \inner{\nabla f(x_k)-\nabla f(x_{k+1})}{\nabla \phi^*(y_{k+1})-\nabla \phi^*(y_k)} \nonumber\\
    =& \sum_{k=0}^{N-1} \bigg( \frac{\sigma\theta_k^2  \norm{\nabla f(x_k) -\nabla f(x_{k+1})}^2_{*}}{2L^2} + \frac{\sigma\norm{\nabla \phi^*(y_k) -\nabla \phi^*(y_{k+1})}^2}{2} \nonumber\\
   & \quad + \frac{\sigma\left(\theta_k^2 - \theta_{k-1}^2 \right) }{L}\inner{\nabla f(x_k)-\nabla f(x_{k+1})}{\nabla \phi^*(y_{k+1})-\nabla \phi^*(y_k)}\bigg)\nonumber\\
   \geq &\; 0,
\end{align*}
where we used $\theta_{k}^2 - \theta_{k} \leq \theta_{k-1}^2$ for $k=0,\dots,N-1$ and $\frac{\sigma}{2L^2}\norm{a}_{*}^2 + \frac{\sigma}{2}\norm{b}^2 \geq \frac{\sigma}{L}\norm{a}_{*}\norm{b} \geq \frac{\sigma}{L}\inner{a}{b}$ for any $(a,b)\in X^* \times X$. The first inequality is AM-GM and the second one comes from the definition of a dual norm.\qed
\end{proof}

\paragraph{Comparison with existing methods.}
Our stated \ref{eqn::AMD} and its analysis represent a minor variation of prior work, which are not part of our main contributions. More specifically, when $C=X$ and $\theta_i^2 - \theta_{i-1}^2 = \theta_i$ for $0\leq i\leq N-1$, \ref{eqn::AMD} is equivalent to \cite[Algorithm~22]{d2021acceleration}, except for the last iterate. In the Euclidean setup, \ref{eqn::AMD} is also equivalent to IGA \cite[Section~5]{auslender2006interior}, again excluding the final iterate. 

In terms of the analysis, the prior work \cite{d2021acceleration} achieves the same convergence rate as in Fact~\ref{fact:amd} under the slightly weaker assumption that $f$ is $L$-smooth on $C$, i.e., that $\norm{\nabla f(x)-\nabla f(y)}_{*}\leq L\norm{x-y}$ for all $x,y \in C$. In contrast, we require that $f$ is $L$-smooth on all of $X$, i.e., that $\norm{\nabla f(x)-\nabla f(y)}_{*}\leq L\norm{x-y}$ for all $x,y \in X$. In our proof, we use \eqref{eqn::coco_with_bregman}, which, to the best of our knowledge, requires that $f$ is $L$-smooth on all of $X$, even though our \ref{eqn::AMD} never evaluates $\nabla f$ outside of $C\subset X$. In this sense, our analysis of \ref{eqn::AMD} is slightly less general than the prior analysis of \cite[Algorithm~22]{d2021acceleration}.

Nevertheless, we provide our proof of Fact~\ref{fact:amd} because the structure of our analysis, namely the reliance on \eqref{eqn::coco_with_bregman}, is crucial for the application of our mirror duality result and the construction of our novel method \ref{eqn::AMD-G} in Section~\ref{sec::mirror_duality}.

\paragraph{Lower bound.} 
The classical $\Omega\left(\frac{L\|x_0-x\|^2}{N^2} \right)$ lower bound in the Euclidean setup by \cite{nemirovsky1992information,nesterov2018} applies and establishes optimality of the rate of Fact~\ref{fact:amd} under all choices of  $(f,\phi)$ that satisfy \hyperlink{iiii}{(A1)}.

\section{Mirror Duality and dual-AMD}
\label{sec::mirror_duality}

In Section~\ref{sec::disc_md}, we simply interchanged the roles of $f$ and $\phi^*=\psi^*$ in \ref{eqn::MD}, which has a rate on $f(x_N)$, to obtain a \ref{eqn::dual_MD}, which has a rate on $\psi^*(\nabla f(q_N))$. A natural question that follows is: Can we ``dualize'' \ref{eqn::AMD}, which has an accelerated rate on $f(x_N)$, to obtain a dual method with an accelerated rate on $\psi^*(\nabla f(\cdot))$? Merely interchanging the role of $f$ and $\phi^*=\psi^*$, however, does not work as it produces a method finding an $r_N\in X^*$ reducing $\psi^*(r_N)$, but it does not find a $q_N\in X$ that reduces $\psi^*(\nabla f(q_N))$.

In this section, we obtain a dual method reducing $\psi^*(\nabla f(\cdot))$ at an accelerated $\mathcal{O}(1/N^2)$ rate. The key insight is to interchange the roles of $f$ and $\phi^*=\psi^*$ and also perform an operation analogous to the anti-diagonal transpose as in Section~\ref{sec:prior_works}. We call the resulting operation the \emph{mirror dual}, and we present a duality correspondence as Theorem~\ref{thm::mirror_duality_main}.

\subsection{Coupled first-order method (CFOM) and its mirror dual}
\label{ss:cfom}
Let $X$ be a reflexive Banach space and $f\colon X \rightarrow \RR$ and $\phi^* \colon X^* \rightarrow \RR$ be differentiable.
An \emph{$N$-step fixed-step coupled first-order method} (CFOM) with step sizes $\{a_{k,i}\}_{0\leq i<k\leq N}$ and $\{b_{k,i}\}_{0\leq i\leq k\leq N}$ is:

\begin{align}  
\begin{split}
    y_{k+1} = y_k - \sum\limits_{i=0}^{k} a_{k+1,i}\nabla f(x_i) , \quad x_{k+1} = x_k - \sum\limits_{i=0}^{k+1} b_{k+1,i}\nabla \phi^{*}(y_{i})\quad
\end{split}\label{eqn::CFSFOM}
\end{align}
for $k=0,\dots, N-1$, where $y_0 \in X^*$ is a starting point and $x_0=\nabla \phi^*(y_0)$.
Note that the upper limits of the two summations are $k$ and $k+1$, which means $y_{k+1}$ is computed before $x_{k+1}$. Although $b_{0,0}$ is not used in \eqref{eqn::CFSFOM}, define $b_{0,0}=-1$ for the sake of notational simplicity in later arguments. The class of CFOMs is very general, and it, in particular, includes \ref{eqn::MD} and \ref{eqn::AMD}.

Let $\psi^* \colon X^* \rightarrow \RR$ be differentiable. Throughout this section, we assume $\psi^*(0)=0$ and $0\in X ^*$ is the unique minimizer of $\psi^*$.
For a given CFOM \eqref{eqn::CFSFOM}, we define its \emph{mirror dual} as
\begin{align}
\begin{split} \label{eqn::dual_alg_def}
     q_{k+1} = q_k - \sum_{i=0}^{k} a_{N-i,N-1-k}\nabla \psi^{*}(r_i), \quad  r_{k+1} = r_k - \sum_{i=0}^{k+1} b_{N-i,N-1-k}\nabla f(q_i)
\end{split} 
\end{align}
for $k=0,1,\dots,N-1$, where $q_0 \in X $ is a starting point and $r_0 = -b_{N,N}\nabla f(q_0)$.
The mirror dual has the coefficients $\{a_{k,i}\}_{0\leq i<k\leq N}$ and $\{b_{k,i}\}_{0\leq i\leq k\leq N}$ used in an anti-diagonal transposed manner as in the H-dual and, furthermore, has the roles of $f$ and $\phi^*$ swapped when $\psi=\phi$. 
In Section~\ref{ss:h-duality-relation}, we more precisely discuss how the mirror dual and mirror duality generalize the prior notions of the H-dual and H-duality.

\begin{proposition} \label{thm::mirror_duality_main_2}
For a CFOM \eqref{eqn::CFSFOM}, assume $1 - \sum_{i=0}^{k}\sum_{j=0}^{i+1} b_{i+1,j} =1$ for $k=0,\dots,N-1$, which is a sufficient condition for $x_k\in \mathrm{conv}\{\nabla \phi^{*}(y_0),\dots,\nabla \phi^{*}(y_k)\}$ for $k=0,\dots , N$. Then, the mirror dual \eqref{eqn::dual_alg_def} satisfies $r_N =\nabla f(q_N)$.
\end{proposition}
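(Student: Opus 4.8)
The plan is to unroll the $r$-recursion of the mirror dual and compute the coefficient of each $\nabla f(q_j)$ in the resulting expression for $r_N$; the claim $r_N=\nabla f(q_N)$ then amounts to showing every such coefficient vanishes except that of $\nabla f(q_N)$, which must equal $1$. Telescoping $r_{k+1}=r_k-\sum_{i=0}^{k+1}b_{N-i,N-1-k}\nabla f(q_i)$ over $k=0,\dots,N-1$ and substituting $r_0=-b_{N,N}\nabla f(q_0)$ gives
\begin{align*}
r_N=-b_{N,N}\,\nabla f(q_0)-\sum_{k=0}^{N-1}\sum_{i=0}^{k+1}b_{N-i,N-1-k}\,\nabla f(q_i)=\sum_{j=0}^{N}c_j\,\nabla f(q_j),
\end{align*}
so everything reduces to identifying the $c_j$.

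For fixed $j$, the index $i=j$ occurs in the double sum precisely for $k\in\{\max(j-1,0),\dots,N-1\}$, and after the reindexing $m=N-1-k$ one would find $c_j=-\sum_{m=0}^{N-j}b_{N-j,m}$ for $1\le j\le N-1$, while for $j=0$ the extra $-b_{N,N}$ coming from $r_0$ completes the range to $c_0=-\sum_{m=0}^{N}b_{N,m}$, and for $j=N$ only $k=N-1$ survives, giving $c_N=-b_{0,0}$. Writing $S_p:=\sum_{m=0}^{p}b_{p,m}$, this reads uniformly $c_j=-S_{N-j}$ for all $j=0,\dots,N$; in particular the convention $b_{0,0}=-1$ makes $c_N=-S_0=-b_{0,0}=1$, so the leading coefficient is automatically correct.

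It then remains to see that the hypothesis forces $S_p=0$ for $p=1,\dots,N$. Setting $p=i+1$, the assumption $\sum_{i=0}^{k}\sum_{j=0}^{i+1}b_{i+1,j}=0$ for $k=0,\dots,N-1$ becomes $\sum_{p=1}^{q}S_p=0$ for every $q=1,\dots,N$, and subtracting the instance $q-1$ from the instance $q$ yields $S_q=0$ for each such $q$. Hence $c_j=-S_{N-j}=0$ for $j=0,\dots,N-1$, and $r_N=c_N\,\nabla f(q_N)=\nabla f(q_N)$. The parenthetical remark that the same hypothesis makes $x_k$ an affine combination of $\nabla\phi^*(y_0),\dots,\nabla\phi^*(y_k)$ comes from the mirror-image bookkeeping: unrolling the $x$-update gives $x_k=\nabla\phi^*(y_0)-\sum_{i=0}^{k-1}\sum_{j=0}^{i+1}b_{i+1,j}\nabla\phi^*(y_j)$, whose coefficients sum to $1-\sum_{i=0}^{k-1}\sum_{j=0}^{i+1}b_{i+1,j}=1$.

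The only genuinely delicate part will be the index bookkeeping inside the double sum — keeping the off-by-one shifts in $b_{N-i,N-1-k}$ straight, matching the boundary contribution of $r_0$ to the $j=0$ coefficient, and noticing that the single convention $b_{0,0}=-1$ is exactly what normalizes the coefficient of $\nabla f(q_N)$ to $1$. Once the coefficient identity $c_j=-S_{N-j}$ is established, the remainder is a one-line telescoping argument.
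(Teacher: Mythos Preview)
Your proposal is correct and follows essentially the same approach as the paper: unroll the $r$-recursion, group by $\nabla f(q_j)$, reduce each coefficient to $-\sum_{m=0}^{N-j}b_{N-j,m}$ (with the $r_0$ term completing the $j=0$ case), and use the hypothesis together with $b_{0,0}=-1$ to conclude. Your notation $S_p$ and the explicit telescoping $\sum_{p=1}^{q}S_p=0\Rightarrow S_q=0$ is a slightly cleaner packaging of exactly the step the paper records as $\sum_{i=0}^{k+1}b_{k+1,i}=0$.
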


Under the condition of Proposition~\ref{thm::mirror_duality_main_2}, a convergence guarantee on $\psi^*(r_N)$ can be translated to a bound on $\psi^*(r_N)=\psi^*(\nabla f(q_N))$. As we showed in the proof of Fact~\ref{fact:amd}, the CFOM \ref{eqn::AMD} satisfies the property $x_k\in \mathrm{conv}\{\nabla \phi^*(y_0),\dots,\nabla \phi^*(y_k)\}$ for $k=0,\dots , N$. Therefore, Proposition~\ref{thm::mirror_duality_main_2} applies to the mirror dual of \ref{eqn::AMD} that we present in Section~\ref{ss:dual-amd}.

\begin{proof}
Observe first that
\begin{align*}
    x_{k+1} = x_0 - \sum_{i=0}^{k} (x_i -x_{i+1}) = \nabla \phi^*(y_0) - \sum_{i=0}^{k}\sum_{j=0}^{i+1} b_{i+1,j}\nabla \phi^*(y_j)
\end{align*}
for $k=0,\dots,N-1$. Therefore, if $1 - \sum_{i=0}^{k}\sum_{j=0}^{i+1} b_{i+1,j} =1$ for $k=0,\dots,N-1$, $x_{k+1} \in \mathrm{conv}\{\nabla \phi^*(y_0),\dots, \nabla \phi^*(y_{k+1})\}$. Now we prove the mirror dual satisfies $r_N=\nabla f(q_N)$. Note that
\begin{align} \label{eqn::appendix_convex_sum_pf_1}
    \sum_{i=0}^{k+1} b_{k+1,i} =0,\quad k=0,\dots,N-1.
\end{align}
Thus, we get that the following holds for the mirror dual:
\begin{align*}
    r_N = & r_0 - \sum_{i=0}^{N-1}\sum_{j=0}^{i+1} b_{N-j,N-1-i}\nabla f(q_j) \\
    =& -b_{N,N}\nabla f(q_0) -  \sum_{i=0}^{N-1} b_{N,N-1-i} \nabla f(q_0) - \sum_{j=1}^{N}\left(\sum_{i=j-1}^{N-1} b_{N-j,N-1-i} \right) \nabla f(q_j)\qquad \\
    =& -b_{N,N}\nabla f(q_0) -  \sum_{i=0}^{N-1} b_{N,N-1-i} \nabla f(q_0) - \sum_{j=1}^{N}\left(\sum_{l=0}^{N-j} b_{N-j,l} \right) \nabla f(q_j) \\
    =& -b_{N,N}\nabla f(q_0) -  \sum_{i=0}^{N-1} b_{N,N-1-i} \nabla f(q_0) - b_{0,0}\nabla f(q_N) \qquad \left( \because \eqref{eqn::appendix_convex_sum_pf_1} \right) \\
    =& -\sum_{i=0}^{N} b_{N,N-i}\nabla f(q_0) + \nabla f(q_N) \qquad \left(\because b_{0,0}=-1 \right)\\
    =& \nabla f(q_N).\qquad \left( \because \eqref{eqn::appendix_convex_sum_pf_1} \right)
\end{align*}    
\qed\end{proof}

\subsection{Mirror Duality} \label{ss::mirror_duality}
For an $N$-step CFOM \eqref{eqn::CFSFOM}, which we denote as $\cA$, consider establishing a convergence guarantee on $f(x_N)-f(x)$ for $x\in \dom \phi$ using the following proof strategy. 
 Assume \hyperlink{iiii}{(A1)} for $(f,\phi)$, i.e., assume $f$ is $L$-smooth and $\phi$ is $\sigma$-strongly convex.
For a positive nondecreasing sequence $\{u_i\}_{i=0}^{N}\subset \RR$, define $\{\mathcal{U}_k\}_{k=0}^{N}$ as 
\begin{align*}
  \cU_0 = \phi(x)+\phi^*(y_0)-\inner{y_0}{x} - u_0D_f(x,x_0) 
\end{align*}
and
\begin{align*}
\quad \mathcal{U}_{k+1} = &\; \mathcal{U}_k - (u_{k+1}-u_k)D_f\left(x , x_{k+1}\right)\\
&- u_{k}\Big(
\underbrace{D_f\left(x_k,x_{k+1}\right)-\frac{1}{2L}\norm{\nabla f(x_k)-\nabla f(x_{k+1})}_{*}^2 }_{\ge 0\text{ by \eqref{eqn::coco_with_bregman}}}\Big) \\
 & - \Big(
 \underbrace{D_{\phi^*}\left(y_k , y_{k+1} \right)-\frac{\sigma}{2}\norm{\nabla \phi^{*}(y_{k+1})-\nabla \phi^*(y_{k})}^2 }_{\ge 0\text{ by \eqref{eqn::coco_with_bregman}}}\Big)
\end{align*}
for $k=0,\dots,N-1$. 
The sequence $\{\cU_k\}_{k=0}^{N}$ is nonincreasing by construction.
Define $\mathbf{U}_{\mathcal{A}}$ via
\begin{align*}
    \mathcal{U}_N =&\; u_N\left(f(x_N)-f(x) \right) + 
    \underbrace{\phi(x)+\phi^*(y_N)-\inner{y_N}{x}}_{\ge 0 \text{ by \eqref{eqn::fenchel}}}\\
    & \quad +\inner{y_N-y_0+\sum_{i=0}^{N}(u_i-u_{i-1})\nabla f(x_i)}{x }+ \mathbf{U}_{\cA}.
\end{align*}
If $y_N-y_0+\sum_{i=0}^{N}(u_i-u_{i-1})\nabla f(x_i)=0$  and $ \mathbf{U}_\cA \geq 0$, then we conclude
\begin{align*}
 u_N( f(x_N)-f(x)) \leq  \cU_N\leq \dots  \leq \cU_{0} \leq \phi(x)+\phi^*(y_0)-\inner{y_0}{x}.
\end{align*}
By a telescoping sum argument, the $f$- and $\phi^*$-function values cancel out and $\mathbf{U}_\cA$ is a function of only $\{\nabla f(x_i)\}_{i=0}^{N} \subset X^*$ and $\{\nabla \phi^*(y_i)\}_{i=0}^{N}\subset X$. (We state the exact form of $\mathbf{U}_\cA$ later in \eqref{eqn::appendix_U_first}.) If we show $ \inf \mathbf{U}_\cA\ge 0$, where the infimum is taken over all possible values of $\{\nabla f(x_i)\}_{i=0}^{N}$ and $\{\nabla \phi^*(y_i)\}_{i=0}^{N}$, then we conclude $ \mathbf{U}_\cA \geq 0$ and $ u_N( f(x_N)-f(x))  \leq \phi(x)+\phi^*(y_0)-\inner{y_0}{x}$. Recall that this is the exact strategy we employed in our proof of Fact~\ref{fact:amd}. With a slight abuse of notation, define $\mathbf{U}_{\cA}\colon (X^*)^{N+1}\times (X)^{N+1}\rightarrow \mathbb{R}$ so that
\begin{equation}
\mathbf{U}_{\cA}=
\mathbf{U}_{\cA}(\nabla f(x_0),\dots,\nabla f(x_N),\nabla \phi^*(y_0),\dots,\nabla \phi^*(y_N)).
\label{eq:UA_functional_form}
\end{equation}

On the other hand, consider establishing a convergence rate of $\psi^*(r_N)$ for an $N$-step dual CFOM \eqref{eqn::dual_alg_def}, which we denote as $\cB$. Assume \hyperlink{iiii}{(A1)} for $(f,\psi)$, i.e., $f$ is $L$-smooth and $\psi$ is $\sigma$-strongly convex. For a positive increasing sequence $\{v_i\}_{i=0}^{N}\subset \RR$, define $\{\cV_k\}_{k=0}^{N}$ as 
\[
\cV_0 = v_0\left(f(q_0)-f(q_N) \right)
\]
and
\begin{align*}
    \cV_{k+1} = & \;\cV_k - (v_{k+1}-v_k)D_f\left(q_N, q_k\right)\\
    &\quad- v_{k+1}\Big(\underbrace{D_f(q_k,q_{k+1})-\frac{1}{2L}\norm{\nabla f(q_k) -\nabla f(q_{k+1})}^2_{*}}_{\ge 0\text{ by \eqref{eqn::coco_with_bregman}}} \Big) \\
    &\quad - \Big(\underbrace{D_{\psi^*}\left(r_k,r_{k+1} \right) - \frac{\sigma}{2}\norm{\nabla \psi^*(r_{k+1})-\nabla \psi^*(r_k)}^2}_{\ge 0\text{ by \eqref{eqn::coco_with_bregman}}} \Big)
\end{align*}
for $k=0,1,\dots,N-1$. The sequence $\{\cV_k\}_{k=0}^{N}$ is nonincreasing by construction.
Define $\mathbf{V}_{\cB}$ via
\begin{align*}
        \mathcal{V}_N = \psi^*(r_N) + \underbrace{D_{\psi^*}\left(0,r_0\right)}_{\ge 0}+\mathbf{V}_{\cB}.
\end{align*}
Here, we used $\psi^*(0)=0$. If $ \mathbf{V}_\cB \geq 0$, then we conclude
\begin{align*}
  \psi^{*}(r_N) \leq  \cV_N \leq\dots \leq \cV_0 = v_0(f(q_0)-f(q_N)) .
\end{align*}
Again, by a telescoping sum argument, the $f$- and $\psi^*$-function values cancel out and $\mathbf{V}_\cB$ is a function of only $\{\nabla f(q_i)\}_{i=0}^{N} \subset X^*$ and $\{\nabla \psi^*(r_i)\}_{i=0}^{N}\subset X$. (We state the exact form of $\mathbf{V}_\cB$ later in \eqref{eqn::appendix_V_first}.) If we show $ \inf \mathbf{V}_\cB\ge 0$, where the infimum is taken over all possible values of $\{\nabla f(x_i)\}_{i=0}^{N}$ and $\{\nabla \psi^*(y_i)\}_{i=0}^{N}$, then we conclude $ \mathbf{V}_\cB \geq 0$ and
\begin{align*}
\psi^{*}(r_N) \leq v_0(f(q_0)-f(q_N))\leq v_0(f(q_0)-\inf_{x\in X} f(x)).
\end{align*}
With a slight abuse of notation, define $\mathbf{V}_{\cB}\colon (X^*)^{N+1}\times (X)^{N+1}\rightarrow \mathbb{R}$ so that
\begin{equation}
\mathbf{V}_{\cB}=
\mathbf{V}_{\cB}(\nabla f(x_0),\dots,\nabla f(x_N),\nabla \psi^*(y_0),\dots,\nabla \psi^*(y_N)).
\label{eq:VB_functional_form}
\end{equation}
The following result establishes a one-to-one correspondence between these two proof strategies.
\begin{theorem}[Mirror Duality] \label{thm::mirror_duality_main}
Let $X$ be a reflexive Banach space and $f\colon X \rightarrow \RR$ and $\phi\colon X \rightarrow \RR$ be CCP. Assume \hyperlink{iiii}{(A1)}. Let $\cA$ be a CFOM \eqref{eqn::CFSFOM} and $\cB$ be its mirror dual CFOM \eqref{eqn::dual_alg_def}.
Let $\{u_i\}_{i=0}^{N}\in \RR$ be a positive nondecreasing sequence and $v_i = \frac{1}{u_{N-i}}$ for $i=0,\dots, N$. Let 
\begin{align*}
&\mathbf{U}_{\cA}\colon (X^*)^{N+1}\times (X)^{N+1}\rightarrow \mathbb{R}\\
&\mathbf{V}_{\cB}\colon (X^*)^{N+1}\times (X)^{N+1}\rightarrow \mathbb{R}
\end{align*} respectively be defined as in \eqref{eq:UA_functional_form} and \eqref{eq:VB_functional_form}.
Then,
\begin{align*}
    &\inf\{\mathbf{U}_\cA(x_0^*,\dots,x_{N}^*,x_0,\dots,x_{N})\,|\,x_0^*,\dots,x_{N}^*\in X^*,\,x_0,\dots,x_{N}\in X^*\}\\
    =  &\inf 
    \{\mathbf{V}_{\cB}(x_0^*,\dots,x_{N}^*,x_0,\dots,x_{N})\,|\,x_0^*,\dots,x_{N}^*\in X^*,\,x_0,\dots,x_{N}\in X^*\}.
\end{align*}
\end{theorem}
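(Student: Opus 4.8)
The plan is to make the two functionals $\mathbf{U}_\cA$ and $\mathbf{V}_\cB$ fully explicit as (homogeneous degree-two) forms in the gradient variables, and then exhibit a linear bijection of $(X^*)^{N+1}\times X^{N+1}$ that carries one form onto the other; since an infimum is unchanged under a bijection of its domain, this yields the claimed equality. First I would compute $\mathbf{U}_\cA$: telescope $\{\cU_k\}_{k=0}^N$, expand every Bregman divergence, and substitute the CFOM recursion $y_k-y_{k+1}=\sum_i a_{k+1,i}\nabla f(x_i)$, $x_k-x_{k+1}=\sum_i b_{k+1,i}\nabla\phi^*(y_i)$ together with the convex-combination identity $x_k=\nabla\phi^*(y_0)-\sum_{i<k}\sum_j b_{i+1,j}\nabla\phi^*(y_j)$ from the proof of Proposition~\ref{thm::mirror_duality_main_2}. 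Exactly as in the proof of Fact~\ref{fact:amd}, all occurrences of $f(\cdot)$, $\phi(\cdot)$, $\phi^*(\cdot)$ and of the comparison point $x$ cancel (the ``telescoping sum argument''), leaving $\mathbf{U}_\cA=\frac{1}{2L}\sum_{k=0}^{N-1}u_k\|\nabla f(x_k)-\nabla f(x_{k+1})\|_*^2+\frac{\sigma}{2}\sum_{k=0}^{N-1}\|\nabla\phi^*(y_k)-\nabla\phi^*(y_{k+1})\|^2+\sum_{i,j}(C_\cA)_{ij}\langle\nabla f(x_i),\nabla\phi^*(y_j)\rangle$, where $C_\cA\in\RR^{(N+1)\times(N+1)}$ has entries polynomial in $\{u_i\},\{a_{k,i}\},\{b_{k,i}\}$. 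The same computation applied to $\{\cV_k\}_{k=0}^N$ and the mirror-dual recursion (using $r_0=-b_{N,N}\nabla f(q_0)$ and $r_N=\nabla f(q_N)$ where needed) gives $\mathbf{V}_\cB$ of the same shape, with $u_k$ replaced by $v_{k+1}$ on the $\nabla f$-square terms, the constant $\sigma/2$ on the $\nabla\psi^*$-square terms, and a matrix $C_\cB$ built from $\{v_i\}$ and the anti-diagonally transposed step sizes $a_{N-i,N-1-k}$, $b_{N-i,N-1-k}$.

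Next I would construct the change of variables. Because no term couples two $X^*$-vectors to two $X$-vectors except through the canonical pairing, a linear map carrying $\mathbf{V}_\cB$ to $\mathbf{U}_\cA$ must have the block form $(g,w)\mapsto(Pg,Qw)$ with $P,Q\in\RR^{(N+1)\times(N+1)}$ invertible. Matching the $\nabla f$-square terms, together with the relation $v_{k+1}=1/u_{N-1-k}$, forces the consecutive differences of $Pg$ to be the reversed, weight-rescaled differences of $g$, namely $(Pg)_k-(Pg)_{k+1}=\pm u_{N-1-k}\,(g_{N-1-k}-g_{N-k})$, which determines $P$ up to an additive matrix with identical rows; matching the $\nabla\psi^*$-square terms fixes $(Qw)_k-(Qw)_{k+1}=\pm(w_{N-1-k}-w_{N-k})$ in the same way. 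The residual freedom in the top rows of $P$ and $Q$ is then spent making the single pairing identity $P^\top C_\cB Q=C_\cA$ hold, and this is precisely the step where the anti-diagonal transpose built into the definition of the mirror dual and the relation $v_i=1/u_{N-i}$ are used; the identity is consistent with a determinant count, since $\det C_\cA$ and $\det C_\cB$ differ by a factor $\prod_i u_i^{\pm1}$ that matches $\det P\det Q$.

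With such $P,Q$ fixed, $(g,w)\mapsto(Pg,Qw)$ is a linear bijection of $(X^*)^{N+1}\times X^{N+1}$ satisfying $\mathbf{U}_\cA(g,w)=\mathbf{V}_\cB(Pg,Qw)$ for all $(g,w)$, so $\mathbf{U}_\cA$ and $\mathbf{V}_\cB$ have the same range, and in particular $\inf\mathbf{U}_\cA=\inf\mathbf{V}_\cB$. The hard part will be the last piece of the middle step: explicitly choosing the top rows of $P$ and $Q$ and verifying $P^\top C_\cB Q=C_\cA$ by hand, since the two sides are indexed in opposite directions and carry the weights reciprocally — a disciplined normalization (e.g.\ taking $P,Q$ lower-triangular with prescribed row sums) together with careful index bookkeeping will be essential. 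A smaller but genuine point, dispatched exactly as in the proof of Fact~\ref{fact:amd}, is confirming that the expansions in the first step really do annihilate every $f/\phi/\phi^*$ value and every comparison-point term, so that $\mathbf{U}_\cA$ and $\mathbf{V}_\cB$ are legitimately functionals of the gradient vectors alone.
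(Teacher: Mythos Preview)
Your proposal is correct and is essentially the paper's approach: expand both $\mathbf{U}_\cA$ and $\mathbf{V}_\cB$ explicitly as degree-two forms in the gradient vectors, then exhibit a block linear bijection of $(X^*)^{N+1}\times X^{N+1}$ carrying one onto the other. The paper's bijection is simply $D_i=B_{N-i}$ on the $X$-side and $C_0=u_NA_N$, $C_{N-i}-C_{N-i-1}=u_i(A_i-A_{i+1})$ on the $X^*$-side, which resolves your ``residual freedom in the top rows'' in one natural stroke and reduces the cross-term match to a direct reindexing rather than a matrix identity $P^\top C_\cB Q=C_\cA$ to be solved for.
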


Proposition~\ref{thm::mirror_duality_main_2} and Theorem~\ref{thm::mirror_duality_main} together provide a sufficient condition that ensures a CFOM $\cA$ with a convergence guarantee on $f(x_N)-f(x)$ can be ``mirror-dualized'' to obtain a mirror dual CFOM $\cB$ with a convergence guarantee on $\psi^*(r_N)=\psi^*(\nabla f(q_N))$, and vice versa.

\begin{proof}
First, we calculate $\mathbf{U}_{\cA}$. Denote $\nabla f(x_{N+1})=0$ for notational convenience.
\begin{align}
\begin{split} \label{eqn::appendix_U_first}
    \mathbf{U}_{\cA} = &\sum_{k=0}^{N} (u_{k}-u_{k-1})\inner{\nabla f(x_{k})}{-x_{k}}\\
    &+ \sum_{k=0}^{N-1} u_k\left(\inner{\nabla f(x_{k+1})}{x_k-x_{k+1}}+\frac{1}{2L}\norm{\nabla f(x_k)-\nabla f(x_{k+1})}^2_{*} \right)  \\
    &\quad + \sum_{k=0}^{N-1} \left(\inner{y_k-y_{k+1}}{\nabla \phi^*(y_{k+1})}+\frac{\sigma}{2}\norm{\nabla \phi^*(y_{k+1})-\nabla \phi^*(y_k)}^2 \right) \\
    =& \sum_{k=0}^{N-1} \frac{u_{k}}{2L}\norm{\nabla f(x_k)-\nabla f(x_{k+1})}^2_{*} + \sum_{k=0}^{N-1}\frac{\sigma}{2}\norm{\nabla \phi^*(y_{k})-\nabla \phi^*(y_{k+1})}^2 \\
    & \quad +\sum_{k=0}^{N-1} \inner{y_{k}-y_{k+1}}{\nabla \phi^*(y_{k+1})} - \sum_{k=0}^{N} u_k\inner{\nabla f(x_k)-\nabla f(x_{k+1})}{x_k}. 
\end{split}
\end{align} 
By the definition of CFOM \eqref{eqn::CFSFOM}, $y_{k-1}-y_k \in \mathrm{span}\{\nabla f(x_0),\dots, \nabla f(x_{k-1})\}$ and $x_k \in x_0 + \mathrm{span}\{\nabla \phi^*(y_0),\dots, \nabla \phi^*(y_k)\}$. Therefore, $\mathbf{U}_{\cA}\colon  \left(X^{*}\right)^{N+1}\times X^{N+1}  \rightarrow \RR$ is a function of $\{a_{k,i}\}_{0\leq i<k\leq N}$ and $\{b_{k,i}\}_{0\leq i\leq k\leq N}$. Next we calculate $\mathbf{V}_{\cB}$.
\begin{align}
\begin{split} \label{eqn::appendix_V_first}
\mathbf{V}_\cB = &\;\cV_N - \psi^*(r_N) - D_{\psi^*}(0,r_0)-\psi^*(r_N) + \psi^*(0) \\
=& \sum_{k=0}^{N-1} (v_{k+1}-v_k)\inner{\nabla f(q_k)}{q_N-q_k}\\
&+ \sum_{k=0}^{N-1} v_{k+1}\left(\inner{\nabla f(q_{k+1})}{q_k-q_{k+1}}+\frac{1}{2L}\norm{\nabla f(q_k)-\nabla f(q_{k+1})}^2_{*} \right) \\
&  + \sum_{k=0}^{N-1}\left( \inner{r_k-r_{k+1}}{\nabla \psi^*(r_{k+1})} + \frac{\sigma}{2}\norm{\nabla \psi^*(r_{k})-\nabla \psi^*(r_{k+1})}^2\right)+\inner{\nabla \psi^*(r_0)}{-r_0} \\
= & \sum_{k=0}^{N-1} \frac{v_{k+1}}{2L} \norm{\nabla f(q_k)-\nabla f(q_{k+1})}^2_{*} + \sum_{k=0}^{N-1} \frac{\sigma}{2}\norm{\nabla \psi^*(r_{k})-\nabla \psi^*(r_{k+1})}^2 \\
&  + \sum_{k=0}^{N} \inner{r_{k-1}-r_{k}}{\nabla \psi^*(r_{k})} \\&  +\sum_{k=0}^{N-1} \inner{v_{k+1}\nabla f(q_{k+1}) - (v_{k+1}-v_k)\nabla f(q_k) -\dots - (v_1 - v_0)\nabla f(q_0)}{q_k-q_{k+1}}
\end{split}
\end{align}
where we have used $r_{-1}=0$. By the definition of mirror dual, $r_{k-1}-r_k \in \mathrm{span}\{\nabla f(q_0),\dots,\nabla f(q_k) \}$ and $q_k - q_{k+1} \in \mathrm{span} \{\nabla \psi^*(r_0),\dots, \nabla \psi^*(r_k) \}$. Therefore, $\mathbf{V}_{\cB} \colon  \left(X^{*}\right)^{N+1} \times X^{N+1} \rightarrow \RR$ is the function of $\{a_{k,i}\}_{0\leq i<k\leq N}$ and $\{b_{k,i}\}_{0\leq i\leq k\leq N}$. To show $\inf \mathbf{U}_{\cA}= \inf \mathbf{V}_{\cB}$ under $v_i = \frac{1}{u_{N-i}}$ for $i=0,\dots,N$, we show the following statement: $\forall \,\left(\{A_i\}_{i=0}^{N}, \{B_i\}_{i=0}^{N}   \right)\in (X^*)^{N+1} \times \left(X\right)^{N+1} ,$
\begin{align*}
     \mathbf{U}_\cA \left(\{A_i\}_{i=0}^{N}, \{B_i\}_{i=0}^{N}   \right) = \mathbf{V}_{\cB}\left( \{C_i\}_{i=0}^{N},\{D_i\}_{i=0}^{N}\right),
\end{align*}
where $\{C_i\}_{i=0}^{N}$ and $\{D_i\}_{i=0}^{N}$ are defined as
\begin{align} \label{eqn::appendix_mirror_transform}
\begin{split}
     C_0\colon =u_N A_N ,\quad & C_{N-i} - C_{N-i-1}\colon = u_i\left(A_i - A_{i+1} \right)  , \quad i=0,1,\dots, N-1 \\
    & D_i \colon = B_{N-i},\quad i=0,1,\dots, N.
\end{split}
\end{align}
Note that the above transformation is a bijection since $u_i$'s are positive, thus if we can show $\mathbf{U}_\cA \left(\{A_i\}_{i=0}^{N}, \{B_i\}_{i=0}^{N}   \right) = \mathbf{V}_{\cB}\left( \{C_i\}_{i=0}^{N},\{D_i\}_{i=0}^{N}\right)$, $\inf \mathbf{U}_{\cA}= \inf \mathbf{V}_{\cB}$. First we write $\mathbf{U}_\cA$ and $\mathbf{V}_\cB$ with $\left(\{A_i\}_{i=0}^{N}, \{B_i\}_{i=0}^{N}   , \{C_i\}_{i=0}^{N},\{D_i\}_{i=0}^{N}\right)$.
\begin{align*}
    \mathbf{U}_{\cA}=& \sum_{k=0}^{N-1} \frac{u_{k}}{2L}\norm{A_k-A_{k+1}}^2_{*} + \sum_{k=0}^{N-1}\frac{\sigma}{2}\norm{B_k - B_{k+1}}^2 \\
    & \quad +\sum_{k=0}^{N-1} \inner{\sum_{i=0}^{k}a_{k+1,i}A_i}{B_{k+1}} - \sum_{k=0}^{N} u_k\inner{A_k - A_{k+1}}{x_k}
\end{align*}
where $x_{k+1} \colon= x_k -\sum_{i=0}^{k+1} b_{k+1,i} B_i$ and $x_0 = B_0$. For simplicity, define $A_{k+1}=C_{-1}= 0$. 
Using \eqref{eqn::appendix_mirror_transform} gives us
\begin{align*}
    \sum_{k=0}^{N-1} \inner{\sum_{i=0}^{k}a_{k+1,i}A_i}{B_{k+1}} = \sum_{k=0}^{N-1}\sum_{i=0}^{k} a_{k+1,i}\inner{A_i}{D_{N-k-1}}
\end{align*}
and
\begin{align*}
    - \sum_{k=0}^{N} u_k\inner{A_k - A_{k+1}}{x_k} = & - \sum_{k=0}^{N} \inner{C_{N-k}-C_{N-k-1}}{x_k} \\
    = & \inner{C_N}{-x_0} + \inner{C_{N-1}}{x_0-x_1} + \dots + \inner{C_0}{x_{N-1}-x_N} \\
    = & \sum_{k=-1}^{N-1} \inner{C_{N-k-1}}{\sum_{i=0}^{k+1} b_{k+1,i}B_i} \\
    = & \sum_{k=0}^{N}\sum_{i=0}^k b_{k,i} \inner{C_{N-k}}{B_i}.
\end{align*}
Combining the above equations, we conclude
\begin{align} \label{eqn::appendix_U_final}
\begin{split}
    \mathbf{U}_{\cA}=& \sum_{k=0}^{N-1} \frac{u_{k}}{2L}\norm{A_k-A_{k+1}}^2_{*} + \sum_{k=0}^{N-1}\frac{\sigma}{2}\norm{B_k - B_{k+1}}^2 \\
    & \quad +\sum_{k=0}^{N-1}\sum_{i=0}^{k} a_{k+1,i}\inner{A_i}{D_{N-k-1}} + \sum_{k=0}^{N}\sum_{i=0}^k b_{k,i} \inner{C_{N-k}}{B_i}.
\end{split}
\end{align}
In the same way, $\mathbf{V}_\cB$ is written as follows. 
\begin{align*}
\mathbf{V}_\cB = & \sum_{k=0}^{N-1} \frac{v_{k+1}}{2L} \norm{C_k-C_{k+1}}^2_{*} + \sum_{k=0}^{N-1} \frac{\sigma}{2}\norm{D_k-D_{k+1}}^2 \\
& \quad + \sum_{k=0}^{N} \inner{\sum_{i=0}^{k} b_{N-i,N-k}C_i} {D_k}\\& \quad +\sum_{k=0}^{N-1} \inner{v_{k+1}C_{k+1} - (v_{k+1}-v_k)C_k -\dots - (v_1 - v_0)C_0}{\sum_{i=0}^{k} a_{N-i,N-1-k}D_i}
\end{align*}
By \eqref{eqn::appendix_mirror_transform},
\begin{align*}
    &\sum_{k=0}^{N} \inner{\sum_{i=0}^{k} b_{N-i,N-k}C_i}{D_k} = \sum_{k=0}^{N}\sum_{i=0}^{k} b_{N-i,N-k} \inner{C_i}{D_k}  = \sum_{k=0}^{N}\sum_{i=0}^{k} b_{N-i,N-k} \inner{C_i}{B_{N-k}}, \\
    &\sum_{k=0}^{N-1} \inner{v_{k+1}C_{k+1} - (v_{k+1}-v_k)C_k -\dots - (v_1 - v_0)C_0}{\sum_{i=0}^{k} a_{N-i,N-1-k}D_i} \\
    \stackrel{(\star)}{=}&  \sum_{k=0}^{N-1} \inner{\frac{1}{u_{N-k-1}}\left(C_{k+1}-C_k\right) + \frac{1}{u_{N-k}}\left(C_k - C_{k-1} \right)+\dots  + \frac{1}{u_N}C_0 }{\sum_{i=0}^{k} a_{N-i,N-1-k}D_i} \\
    =& \sum_{k=0}^{N-1} \inner{(A_{N-k-1}-A_{N-k})+ (A_{N-k}-A_{N-k+1})+\dots + A_N}{\sum_{i=0}^{k} a_{N-i,N-1-k}D_i} \\
    =& \sum_{k=0}^{N-1} \inner{A_{N-k-1}}{\sum_{i=0}^{k} a_{N-i,N-1-k}D_i} \\
    =& \sum_{k=0}^{N-1}\sum_{i=0}^{k} a_{N-i,N-k-1}\inner{A_{N-k-1}}{D_i}.
\end{align*}
$v_i = \frac{1}{u_{N-i}}$ is used at $(\star)$.
\begin{align}
\begin{split} \label{eqn::appendix_V_final}
    \mathbf{V}_{\cB} =  & \sum_{k=0}^{N-1} \frac{v_{k+1}}{2L} \norm{C_k-C_{k+1}}^2_{*} + \sum_{k=0}^{N-1} \frac{\sigma}{2}\norm{D_k-D_{k+1}}^2 \\
& \quad +\sum_{k=0}^{N}\sum_{i=0}^{k} b_{N-i,N-k} \inner{C_i}{B_{N-k}} +\sum_{k=0}^{N-1}\sum_{i=0}^{k} a_{N-i,N-k-1}\inner{A_{N-k-1}}{D_i} \\
\stackrel{(\circ)}{=} & \sum_{k=0}^{N-1} \frac{u_i}{2L}\norm{A_k-A_{k+1}}_{*}^2 + \sum_{k=0}^{N-1} \frac{\sigma}{2}\norm{B_k-B_{k+1}}^2 \\
& \quad +\sum_{k=0}^{N}\sum_{i=0}^{k} b_{N-i,N-k} \inner{C_i}{B_{N-k}} +\sum_{k=0}^{N-1}\sum_{i=0}^{k} a_{N-i,N-k-1}\inner{A_{N-k-1}}{D_i} \\
\stackrel{(\bullet)}{=} & \sum_{k=0}^{N-1} \frac{u_i}{2L}\norm{A_k-A_{k+1}}_{*}^2 + \sum_{k=0}^{N-1} \frac{\sigma}{2}\norm{B_k-B_{k+1}}^2 \\
& \quad + \sum_{l=0}^{N}\sum_{j=0}^{l} b_{l,j} \inner{C_{N-l}}{B_j} + \sum_{l=0}^{N-1}\sum_{j=0}^l a_{l+1,j}\inner{A_j}{D_{N-l-1}} \\
\stackrel{(\bullet)}{=} & \sum_{k=0}^{N-1} \frac{u_i}{2L}\norm{A_k-A_{k+1}}_{*}^2 + \sum_{k=0}^{N-1} \frac{\sigma}{2}\norm{B_k-B_{k+1}}^2 \\
& \quad + \sum_{k=0}^{N}\sum_{i=0}^{k} b_{k,i} \inner{C_{N-k}}{B_i} + \sum_{k=0}^{N-1}\sum_{i=0}^k a_{k+1,i}\inner{A_i}{D_{N-k-1}}
\end{split}
\end{align}
\eqref{eqn::appendix_mirror_transform} is used at $(\circ)$. $(\bullet)$s are index changing. Now it becomes clear that \eqref{eqn::appendix_V_final} is the same as \eqref{eqn::appendix_U_final}.
\qed\end{proof}

\subsection{dual-AMD}
\label{ss:dual-amd}

Again, consider the problem
\begin{align*}
\begin{array}{ll}
\underset{x \in X}{\mbox{find}}&\quad 0=\nabla f(x),
\end{array}
\end{align*} 
where $X$ is a reflexive Banach space and $f\colon X\to\RR$ is a differentiable CCP function. Given an approximate solution, we measure its suboptimality via
\[
\psi^*(\nabla f(x)).
\]
Recall that we assumed $\psi^*(0)=0$ and $0\in X^*$ is the unique minimizer of $\psi^*$. Consider \hyperlink{iiii}{(A1)}, which we restate as
\begin{itemize}
     \item [\hypertarget{iiiii}{(\hyperlink{iiiii}{A1})}]
     $f$ is $L$-smooth; $\psi$ is $\sigma$-strongly convex; $L>0$, $\sigma>0$.
\end{itemize}
We propose dual accelerated mirror descent \eqref{eqn::AMD-G}
\begin{align}
                & q_{k+1} = q_k - \frac{\sigma}{L}(\theta_{N-k-1}^2 - \theta_{N-k-2}^2) \nabla \psi^* (r_k)\nonumber\\
                & g_{k+1}=g_k + \frac{1}{\theta_{N-k-1}^2}\left(\nabla f(q_{k+1}) -\nabla f(q_k) \right)
        \label{eqn::AMD-G}
        \tag{dual-AMD}\\
                & r_{k+1} = r_k + (\theta_{N-k-1}^2 - \theta_{N-k-2}^2)\left(g_{k+1}-g_k \right)+(\theta_{N-k-2}^2 - \theta_{N-k-3}^2)g_{k+1}
\nonumber
\end{align}
\begin{lemma}\label{lemma::amd_and_dual_amd}
\ref{eqn::AMD-G} is the mirror dual of \ref{eqn::AMD}.
\end{lemma}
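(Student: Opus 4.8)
The plan is to make both algorithms concrete and then match step-size coefficients. First I would rewrite \ref{eqn::AMD} in the canonical CFOM form \eqref{eqn::CFSFOM}, i.e.\ identify its coefficients $\{a_{k,i}\}_{0\le i<k\le N}$ and $\{b_{k,i}\}_{0\le i\le k\le N}$. The $y$-update of \ref{eqn::AMD} is already of the required shape, so $a_{k+1,i}=0$ for $i<k$ and $a_{k+1,k}=\tfrac{\sigma}{L}(\theta_k^2-\theta_{k-1}^2)$. For the $b$'s I would reuse the telescoped identity established inside the proof of Fact~\ref{fact:amd}, namely $x_{k+1}=\tfrac{\theta_{k+1}^2-\theta_{k-1}^2}{\theta_{k+1}^2}\nabla\phi^*(y_{k+1})+\sum_{s=1}^{k}\tfrac{\theta_{s-1}^2-\theta_{s-2}^2}{\theta_{k+1}^2}\nabla\phi^*(y_s)$: differencing the $k$ and $k{+}1$ versions expresses $x_{k+1}-x_k=-\sum_{i} b_{k+1,i}\nabla\phi^*(y_i)$, from which I read off $b_{k+1,k+1}=-\tfrac{\theta_{k+1}^2-\theta_{k-1}^2}{\theta_{k+1}^2}$ and, for the earlier indices, the mixed expressions coming from the factor $\bigl(\tfrac{1}{\theta_{k+1}^2}-\tfrac{1}{\theta_k^2}\bigr)$ multiplying the expansion of $x_k$.

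Next I would apply the mirror-dual construction \eqref{eqn::dual_alg_def} to those coefficients. Since the only nonzero $a$'s lie on the subdiagonal, the sum $\sum_{i=0}^{k} a_{N-i,N-1-k}\nabla\psi^*(r_i)$ collapses to its $i=k$ term, giving precisely $q_{k+1}=q_k-\tfrac{\sigma}{L}(\theta_{N-k-1}^2-\theta_{N-k-2}^2)\nabla\psi^*(r_k)$, the first line of \ref{eqn::AMD-G}. The remaining content is the $r$-update: I must show that $r_{k+1}=r_k-\sum_{i=0}^{k+1} b_{N-i,N-1-k}\nabla f(q_i)$ is the same recursion as the two auxiliary lines of \ref{eqn::AMD-G}. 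Since \ref{eqn::AMD-G} is written with the auxiliary variable $g$ rather than literally in CFOM form, I would first eliminate $g$: solve the $g$-recursion to write $g_k$ as an explicit affine combination of $\{\nabla f(q_i)\}_{i\le k}$ (using the stated initialization of $g_0$, hence of $r_0$), substitute into $r_{k+1}-r_k=(\theta_{N-k-1}^2-\theta_{N-k-2}^2)(g_{k+1}-g_k)+(\theta_{N-k-2}^2-\theta_{N-k-3}^2)g_{k+1}$, and collect the coefficient of each $\nabla f(q_i)$.

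The main obstacle is exactly this coefficient bookkeeping: proving that the coefficient of $\nabla f(q_i)$ produced by \ref{eqn::AMD-G} equals $-b_{N-i,N-1-k}$ for all $0\le i\le k+1$ and all $k$. The ``interior'' indices reduce to a single rational identity among four consecutive values $\theta_{m}^2,\theta_{m-1}^2,\theta_{m-2}^2,\theta_{m-3}^2$ that holds after clearing the denominator $\theta_m^2\theta_{m-1}^2$; the endpoints $i\in\{k,k+1\}$ and $i=0$ need their own short computations, and one must separately check that the chosen $g_0$ makes $r_0=-b_{N,N}\nabla f(q_0)$, using $\theta_N=\theta_{N-1}$. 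It is worth noting that this is a finite algebraic identity and never invokes the growth condition $\theta_i^2-\theta_i\le\theta_{i-1}^2$; only the boundary values $\theta_{-1}=0$ and $\theta_N=\theta_{N-1}$ enter. Finally, as a byproduct I would note that \ref{eqn::AMD} satisfies $1-\sum_{i=0}^{k}\sum_{j=0}^{i+1}b_{i+1,j}=1$ (this is precisely the convex-combination property verified in the proof of Fact~\ref{fact:amd}), so Proposition~\ref{thm::mirror_duality_main_2} applies to its mirror dual and yields $r_N=\nabla f(q_N)$, which is what lets the guarantee on $\psi^*(r_N)$ be read as a guarantee on $\psi^*(\nabla f(q_N))$.
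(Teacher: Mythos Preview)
Your plan is essentially the paper's own proof: identify the CFOM coefficients $a_{k+1,i}$ and $b_{k+1,i}$ of \ref{eqn::AMD} by differencing the telescoped expression for $x_k$ from the proof of Fact~\ref{fact:amd}, then unroll the $g$- and $r$-recursions of \ref{eqn::AMD-G} to read off its coefficients and match them against the anti-diagonally transposed $a$'s and $b$'s. The paper carries this out in the appendix in exactly the way you describe, including the separate endpoint cases, and your remark that only the boundary conditions $\theta_{-1}=0$ and $\theta_N=\theta_{N-1}$ (not the growth inequality) are used is correct.
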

We defer the proof to Appendix~\ref{Appendix:Banach}, which follows from direct calculations of the coefficients. Now, our Mirror Duality theorem and Lemma~\ref{lemma::amd_and_dual_amd} immediately imply a convergence guarantee on \ref{eqn::AMD-G}.
\begin{corollary}[dual-AMD]\label{thm::AMD-G}
Let $X$ be a reflexive Banach space and $f\colon X\to\RR$ and $\psi\colon X\to\RR$ be CCP. Assume \hyperlink{iiiii}{(A1)}.
Then, the iterates of \ref{eqn::AMD-G} are well defined and $r_N = \nabla f(q_N)$. If we further assume that $\psi^*(0)=0$ and $0\in X^*$ is the unique minimizer of $\psi^*$, then
\begin{align*}
       \psi^{*}(\nabla f(q_N)) = \psi^{*}(r_N)  \leq \frac{L}{\sigma \theta_N^2}\Big(f(q_0)-\inf_{x\in X}f(x) \Big) \overset{(\bullet)}{=} \cO\bigg(\frac{L(f(q_0)-\inf_{x\in X}f(x)) }{\sigma N^2}\bigg),
\end{align*}
where  $(\bullet)$ holds if $\theta_{i}^2-\theta_{i}= \theta_{i-1}^2$ for $0 \leq i\leq N-1$.
 (If $\inf_{x\in X}f(x)=-\infty$, then the right-hand-side is $+\infty$ and the bound vacuously holds.)
\end{corollary}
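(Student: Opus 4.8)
The plan is to assemble Corollary~\ref{thm::AMD-G} from three results already in place: Lemma~\ref{lemma::amd_and_dual_amd} (that \ref{eqn::AMD-G} is the mirror dual of \ref{eqn::AMD}), Proposition~\ref{thm::mirror_duality_main_2} (that the mirror dual of a CFOM whose primal iterates are convex combinations of the $\nabla\phi^*(y_i)$ satisfies $r_N=\nabla f(q_N)$), and Theorem~\ref{thm::mirror_duality_main} (that $\inf\mathbf{U}_{\cA}=\inf\mathbf{V}_{\cB}$ under $v_i=1/u_{N-i}$). Throughout I take $\cA$ to be \ref{eqn::AMD} run with distance-generating function $\phi:=\psi$, equipped with the energy weights $u_i=\tfrac{\sigma}{L}\theta_i^2$ used in the proof of Fact~\ref{fact:amd}, and $\cB$ to be \ref{eqn::AMD-G} with $v_i=1/u_{N-i}=\tfrac{L}{\sigma\theta_{N-i}^2}$; since $\theta_i\ge\theta_0=1$, the sequence $\{u_i\}_{i=0}^{N}$ is positive and nondecreasing, hence so is $\{v_i\}_{i=0}^{N}$, with $v_0=\tfrac{L}{\sigma\theta_N^2}$.

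First I would settle well-definedness and the identity $r_N=\nabla f(q_N)$. Since $f$ is $L$-smooth it is differentiable on all of $X$, and since $\psi$ is $\sigma$-strongly convex and CCP, Lemma~\ref{lemma:conjugate_of_sc_is_smooth} makes $\psi^*$ be $\tfrac1\sigma$-smooth and hence differentiable on all of $X^*$; thus every evaluation of $\nabla f$ and $\nabla\psi^*$ appearing in \eqref{eqn::AMD-G} is legitimate, so the iterates are well defined. By Lemma~\ref{lemma::amd_and_dual_amd}, \ref{eqn::AMD-G} is the mirror dual \eqref{eqn::dual_alg_def} of the CFOM \ref{eqn::AMD}. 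The proof of Fact~\ref{fact:amd} exhibited each $x_k$ as a convex combination of $\nabla\phi^*(y_0),\dots,\nabla\phi^*(y_k)$, which is precisely the hypothesis $1-\sum_{i=0}^{k}\sum_{j=0}^{i+1}b_{i+1,j}=1$ of Proposition~\ref{thm::mirror_duality_main_2}; that proposition then yields $r_N=\nabla f(q_N)$, so $\psi^*(r_N)=\psi^*(\nabla f(q_N))$.

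It remains to prove the bound, and here the key point is that the energy computation at the end of the proof of Fact~\ref{fact:amd} establishes $\mathbf{U}_{\cA}\ge0$ \emph{as a function of all of its arguments} — that is, $\inf\mathbf{U}_{\cA}\ge0$ over $(X^*)^{N+1}\times X^{N+1}$ — because the nonnegativity came from applying AM--GM and the dual-norm inequality termwise, never from a property special to realized gradients. Plugging this into Theorem~\ref{thm::mirror_duality_main} (with $\phi:=\psi$ and $v_i=1/u_{N-i}$) gives $\inf\mathbf{V}_{\cB}=\inf\mathbf{U}_{\cA}\ge0$, hence in particular $\mathbf{V}_{\cB}\ge0$ when evaluated at the actual iterates of \ref{eqn::AMD-G}. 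Now invoke the proof template of Section~\ref{ss::mirror_duality} for $\cB$: the associated energy sequence $\{\cV_k\}_{k=0}^{N}$ is nonincreasing by \eqref{eqn::convex_with_bregman} and \eqref{eqn::coco_with_bregman}, satisfies $\cV_0=v_0\big(f(q_0)-f(q_N)\big)$, and obeys $\cV_N=\psi^*(r_N)+D_{\psi^*}(0,r_0)+\mathbf{V}_{\cB}\ge\psi^*(r_N)$ using $\psi^*(0)=0$, $D_{\psi^*}(0,r_0)\ge0$, and $\mathbf{V}_{\cB}\ge0$. Chaining these,
\begin{align*}
\psi^*(r_N)\le\cV_N\le\cV_0=v_0\big(f(q_0)-f(q_N)\big)\le v_0\Big(f(q_0)-\inf_{x\in X}f(x)\Big)=\frac{L}{\sigma\theta_N^2}\Big(f(q_0)-\inf_{x\in X}f(x)\Big),
\end{align*}
which is the claimed inequality; under $\theta_i^2-\theta_i=\theta_{i-1}^2$ one has $\theta_N=\theta_{N-1}=\Theta(N)$ exactly as in Fact~\ref{fact:amd}, giving the $\cO\big(L(f(q_0)-\inf_{x\in X}f(x))/(\sigma N^2)\big)$ form, and if $\inf_{x\in X}f(x)=-\infty$ the right-hand side is $+\infty$ and the bound is vacuous.

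The genuinely laborious parts — checking that the step sizes of \ref{eqn::AMD-G} are the anti-diagonal transpose of those of \ref{eqn::AMD} (Lemma~\ref{lemma::amd_and_dual_amd}) and verifying the algebraic identity underlying Theorem~\ref{thm::mirror_duality_main} — are carried out elsewhere, so the only real subtlety in the corollary itself is the one flagged above: one must use the proof of Fact~\ref{fact:amd} at the level of the statement ``$\inf\mathbf{U}_{\cA}\ge0$ for arbitrary inputs,'' not merely ``$\mathbf{U}_{\cA}\ge0$ at the realized iterates,'' since it is this stronger, input-free statement that Mirror Duality transports to the dual side. Everything else is bookkeeping with the $\theta$-sequence and the correspondence $v_i=1/u_{N-i}$.
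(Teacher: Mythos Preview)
Your proof is correct and follows essentially the same approach as the paper: invoke Lemma~\ref{lemma::amd_and_dual_amd} to identify \ref{eqn::AMD-G} as the mirror dual of \ref{eqn::AMD}, use Proposition~\ref{thm::mirror_duality_main_2} (via the convex-combination computation in the proof of Fact~\ref{fact:amd}) to get $r_N=\nabla f(q_N)$, and then transport $\inf\mathbf{U}_{\text{AMD}}\ge 0$ through Theorem~\ref{thm::mirror_duality_main} with $u_i=\tfrac{\sigma}{L}\theta_i^2$ and $v_i=1/u_{N-i}$ to conclude the bound. Your write-up is more explicit about well-definedness and about the need for the \emph{infimum} (rather than pointwise) nonnegativity of $\mathbf{U}_{\cA}$, but the argument is the paper's.
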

\begin{proof}
As shown in Fact~\ref{fact:amd}, the convergence rate of \ref{eqn::AMD} is proved by $\{\cU_k\}_{k=0}^{N}$ with $u_i=\frac{\sigma}{L}\theta_i^2$, $\inf \mathbf{U}_{\text{AMD}} \geq 0$, and $x=x_\star$. Since $u_i=\frac{\sigma}{L}\theta_i^2$ are positive and non-decreasing, $v_i=  \frac{1}{u_{N-i}}$ are also positive and non-decreasing.

Now, we use Theorem~\ref{thm::mirror_duality_main} and Proposition~\ref{thm::mirror_duality_main_2}. Since \ref{eqn::AMD-G} is the mirror dual of \ref{eqn::AMD} (Lemma~\ref{lemma::amd_and_dual_amd}),Theorem~\ref{thm::mirror_duality_main} provides $\inf \mathbf{V}_{\text{dual-AMD}}=\inf \mathbf{U}_{\text{AMD}}\geq 0.$ Therefore, $\psi^*(r_N)\leq \frac{L}{\sigma}\frac{1}{\theta_N^2}\left(f(q_0)-f_\star \right)$ holds for \ref{eqn::AMD-G} and Proposition~\ref{thm::mirror_duality_main_2} gives us $r_N = \nabla f(q_N)$, which leads to
\begin{align*}
    \psi^{*}(\nabla f(q_N))  = \psi^{*}(r_N)  \leq \frac{L}{\sigma\theta_N^2}\left(f(q_0)-f_\star \right).
\end{align*}
\qed\end{proof}

\subsection{Concatenation of \ref{eqn::AMD} and \ref{eqn::AMD-G}}
\label{sec::concat}
Concatenating \ref{eqn::AMD} and \ref{eqn::AMD-G}, we obtain a $\cO\left(1/N^4 \right)$ rate on $\psi^*(\nabla f(\cdot))$.
More specifically, assume $C=X$, so that $f_\star=\inf_{x\in C}f(x)=\inf_{x\in X}f(x)$.  Also assume $f$ is $L$-smooth convex, $\phi$ is $\sigma_1$-strongly convex, and $\psi$ is $\sigma_2$-strongly convex. 
Starting from $x_0=\nabla \phi^*(y_0)$, execute $N$ iterations of \ref{eqn::AMD} to obtain $X_N$. Then, starting from $q_0=x_N$, execute $N$ iterations of \ref{eqn::AMD-G} and denote the output as $x_{2N}=q_N$.
\begin{corollary}
\label{cor:amd+amdg}
Assume \hyperlink{iiiii}{(A1)} and $x_\star \in \dom \phi \cap \mathrm{argmin}f$ exists. Then concatenation of \ref{eqn::AMD} and \ref{eqn::AMD-G} has the rate
\begin{align*}
    &\psi^*(\nabla f(x_{2N}))\underbrace{\leq}_{\text{\ref{eqn::AMD-G}}} \frac{L}{\sigma_1 \theta_N^2}\left(f(x_N)-f(x_\star) \right) \\&\qquad\underbrace{\leq}_{\text{\ref{eqn::AMD}}} \frac{L^2 D_\phi\left(x_\star,x_0\right)}{\sigma_1 \sigma_2 \theta_N^4} \stackrel{(\bullet)}{=}\mathcal{O}\left(\frac{L^2D_\phi\left(x_\star,x_0\right)}{\sigma_1 \sigma_2 N^4} \right),
\end{align*}
where  $(\bullet)$ holds if $\theta_{i}^2-\theta_{i}= \theta_{i-1}^2$ for $0 \leq i\leq N-1$.
\end{corollary}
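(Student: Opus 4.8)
The plan is a direct composition of the two convergence guarantees already established, with no new energy-function argument: view the output $x_N$ of \ref{eqn::AMD} as the initialization $q_0$ of \ref{eqn::AMD-G}, apply Fact~\ref{fact:amd} to the first block and Corollary~\ref{thm::AMD-G} to the second, and chain the resulting bounds.

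First I would invoke Fact~\ref{fact:amd} for the $N$ iterations of \ref{eqn::AMD}. Its hypotheses hold here: $f$ is $L$-smooth, $\phi$ is $\sigma_1$-strongly convex, and $x_\star\in\dom\phi$. Taking $x=x_\star$ yields
\[
f(x_N)-f(x_\star)\leq \frac{L\,D_\phi(x_\star,x_0)}{\sigma_1\theta_N^2}.
\]
Since $C=X$ and $x_\star\in\argmin f$, we have $f(x_\star)=f_\star=\inf_{x\in X}f(x)$, a finite number, so the left side equals $f(x_N)-\inf_{x\in X}f(x)\geq 0$. Next I would invoke Corollary~\ref{thm::AMD-G} for the $N$ iterations of \ref{eqn::AMD-G} started at $q_0=x_N$: here $f$ is $L$-smooth, $\psi$ is $\sigma_2$-strongly convex, $\psi^*(0)=0$, and $0$ is the unique minimizer of $\psi^*$, so the corollary applies and gives $r_N=\nabla f(q_N)=\nabla f(x_{2N})$ together with
\[
\psi^*(\nabla f(x_{2N}))=\psi^*(r_N)\leq\frac{L}{\sigma_2\theta_N^2}\Big(f(q_0)-\inf_{x\in X}f(x)\Big)=\frac{L}{\sigma_2\theta_N^2}\big(f(x_N)-f_\star\big).
\]
Substituting the \ref{eqn::AMD} bound for $f(x_N)-f_\star$ gives $\psi^*(\nabla f(x_{2N}))\leq \frac{L^2 D_\phi(x_\star,x_0)}{\sigma_1\sigma_2\theta_N^4}$, which is the asserted inequality. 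For the asymptotic $(\bullet)$, when $\theta_i^2-\theta_i=\theta_{i-1}^2$ one has $\theta_{i+1}\geq \theta_i+\tfrac12$ and hence $\theta_N=\Theta(N)$, so $\theta_N^4=\Theta(N^4)$, exactly as in the $(\bullet)$ steps of Fact~\ref{fact:amd} and Corollary~\ref{thm::AMD-G}.

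I do not expect a genuine obstacle; the only care needed is bookkeeping around the concatenation. One should confirm that the two invocations are simultaneously legitimate: $f$ must be $L$-smooth on all of $X$ (both underlying proofs pass through \eqref{eqn::coco_with_bregman}, which uses global smoothness), $x_N\in\dom\nabla f=X$ so that $\nabla f(x_{2N})$ is well defined and the identity $r_N=\nabla f(q_N)$ from Proposition~\ref{thm::mirror_duality_main_2} is meaningful, and the middle quantity must be recognized as literally the same object on both sides — it appears as $f(q_0)-\inf_X f$ from the dual-AMD side and as $f(x_N)-f(x_\star)$ from the AMD side, and these coincide because $q_0=x_N$, $C=X$, and $x_\star$ is a global minimizer, so the two bounds compose with no slack beyond what each contributes.
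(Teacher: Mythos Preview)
Your proposal is correct and is exactly the argument the paper intends: the corollary is stated with the proof embedded in the underbraces, namely chaining Corollary~\ref{thm::AMD-G} (applied from $q_0=x_N$) with Fact~\ref{fact:amd} (applied with $x=x_\star$), using $C=X$ so that $f(x_\star)=\inf_{x\in X}f(x)$. The only cosmetic discrepancy is that your intermediate bound carries $\sigma_2$ where the paper's displayed intermediate carries $\sigma_1$; your labeling is the natural one ($\sigma_2$ for $\psi$ in the dual-AMD step, $\sigma_1$ for $\phi$ in the AMD step), and the final product $\sigma_1\sigma_2$ is unaffected.
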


\paragraph{Lower bound.} 
Under \hyperlink{iiiii}{(A1)}, the classical $\Omega\Big(\frac{L^2 \|x_0-x_\star\|^2}{N^4}\Big)$ lower bound by \cite{nemirovsky1991optimality,nemirovsky1992information}
applies and establishes optimality of this concatenated rate. This also implies optimality of \ref{eqn::AMD-G} under all choices of $(f,\phi)$ that satisfies \hyperlink{iiii}{(A1)}, since if the rate of \ref{eqn::AMD-G} could be improved, then the concatenated rate of \ref{eqn::AMD} and \ref{eqn::AMD-G} would be improved and would violate the aforementioned lower bound.

\section{Applications}
\label{s:applications}
In this section, we provide two applications of \ref{eqn::AMD-G} and \ref{eqn::AMD}.
Our discussion of these applications very closely follows \cite{diakonikolas2023complementary}.

\subsection{Small gradients with respect to $\norm{\cdot}_q$-norm}
\label{sec::l_p_norm_grad}

For $p\in (1,2]$ and $q=\frac{p}{p-1}$, let $f\colon \mathbb{R}^n\rightarrow\mathbb{R}$ be a differentiable convex function that is $L$-smooth with respect to $\norm{\cdot}_p$. Our goal is to find an $x\in \mathbb{R}^n$ such that $\|\nabla f(x)\|_q$ is small, and the concatenation of \ref{eqn::AMD} and \ref{eqn::AMD-G} yields an optimal method for this task, improving upon the prior state-of-the-art rate of \cite{diakonikolas2023complementary}.

For a starting point $x_0 \in X$, we choose $\phi(\cdot)=\frac{1}{2}\norm{\cdot-x_0}_p^2$ and $\psi(\cdot)= \frac{1}{2}\norm{\cdot}^2_{p}$, which are $(p-1)$-strongly convex with respect to $\norm{\cdot}_{p}$ \cite[Proposition~3.5]{juditsky2008large}. It is straightforward to show that $\phi^*(\cdot)=\frac{1}{2}\norm{\cdot}_q^2 + \inner{\cdot}{x_0}$ and $\psi^*(\cdot)= \frac{1}{2}\norm{\cdot}^2_{q}$ and that $\phi^*$ and $\psi^*$ are differentiable on $\mathbb{R}^n$ with
{\small
\[
\nabla \phi^*(u)=\norm{u}_q^{2-q}\left(\lvert u_1\rvert^{q-1},\dots, \lvert u_n \rvert^{q-1} \right) + x_0 ,\quad
\nabla \psi^*(u)=\norm{u}_q^{2-q}\left(\lvert u_1\rvert^{q-1},\dots, \lvert u_n \rvert^{q-1} \right).
\]
}

\begin{corollary} \label{cor::grad_norm_p<2}
Let $p\in (1,2]$ and $f$ be an $L$-smooth convex function with respect to $\norm{\cdot}_{p}$. Let $x_0 \in X$ be a given starting point. The concatenation of \ref{eqn::AMD} and \ref{eqn::AMD-G} with $\sigma=\tau=p-1$, distance-generating functions $\phi(\cdot)=\frac{1}{2}\norm{\cdot-x_0}^2_p$ and $\psi(\cdot)=\frac{1}{2}\norm{\cdot}^2_{p}$, starting point $y_0=\nabla \phi(x_0)=0$, and $\theta_{i}^2 - \theta_{i} = \theta_{i-1}^2$ for $i=0,\dots, N-1$ exhibits the rate 
\begin{align*}
    \norm{\nabla f(x_{2N})}_{q} \leq \frac{{L}\norm{x_0- x_\star}_p}{(p-1) \theta_N^2}=\cO\left(\frac{L\norm{x_0- x_\star}_p}{ N^2} \right).
\end{align*}
\end{corollary}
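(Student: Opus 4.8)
The plan is to read off the result from the concatenation bound Corollary~\ref{cor:amd+amdg}, after checking that the chosen pair $(\phi,\psi)$ fits its hypotheses, and then to convert the abstract quantities $\psi^*(\nabla f(\cdot))$ and $D_\phi(x_\star,x_0)$ into the concrete $\ell_p/\ell_q$ expressions appearing in the statement.

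First I would take the ambient norm to be $\norm{\cdot}=\norm{\cdot}_p$, so that $\norm{\cdot}_*=\norm{\cdot}_q$; then $L$-smoothness of $f$ with respect to $\norm{\cdot}_p$ is exactly the smoothness half of \hyperlink{iiii}{(A1)}. By \cite[Proposition~3.5]{juditsky2008large}, both $\phi(\cdot)=\tfrac12\norm{\cdot-x_0}_p^2$ and $\psi(\cdot)=\tfrac12\norm{\cdot}_p^2$ are $(p-1)$-strongly convex with respect to $\norm{\cdot}_p$, so \hyperlink{iiii}{(A1)} holds with $\sigma_1=\sigma_2=p-1$ (this is the meaning of ``$\sigma=\tau=p-1$'' in the statement); all three functions are finite, CCP, and Legendre convex on $\mathbb{R}^n$, and $x_\star\in\dom\phi=\mathbb{R}^n$ is automatic once a minimizer of $f$ exists. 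Since $\psi^*(\cdot)=\tfrac12\norm{\cdot}_q^2$ (recorded just above the statement), we have $\psi^*(0)=0$ with $0$ its unique minimizer, so the extra hypothesis of Corollary~\ref{cor:amd+amdg} on $\psi^*$ is met.

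Next I would evaluate the two quantities in the bound. Because $\nabla\phi(x_0)=0$ (which is also why $y_0=\nabla\phi(x_0)=0$, and consistently $x_0=\nabla\phi^*(y_0)=\nabla\phi^*(0)$), the Bregman divergence reduces to $D_\phi(x_\star,x_0)=\phi(x_\star)-\phi(x_0)-\inner{\nabla\phi(x_0)}{x_\star-x_0}=\tfrac12\norm{x_\star-x_0}_p^2$, while on the output side $\psi^*(\nabla f(x_{2N}))=\tfrac12\norm{\nabla f(x_{2N})}_q^2$. Plugging these and $\sigma_1=\sigma_2=p-1$ into Corollary~\ref{cor:amd+amdg} gives $\tfrac12\norm{\nabla f(x_{2N})}_q^2\le \frac{L^2\norm{x_0-x_\star}_p^2}{2(p-1)^2\theta_N^4}$, and taking square roots yields $\norm{\nabla f(x_{2N})}_q\le \frac{L\norm{x_0-x_\star}_p}{(p-1)\theta_N^2}$.

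Finally I would record the asymptotics: with $\theta_0=1$ and $\theta_i^2-\theta_i=\theta_{i-1}^2$ one gets $\theta_i\ge\theta_{i-1}+\tfrac12$, hence $\theta_N=\Theta(N)$ and $\theta_N^2=\Theta(N^2)$, which gives the claimed $\cO(L\norm{x_0-x_\star}_p/N^2)$ rate. I do not expect a genuine obstacle; the only points needing care are keeping the two strong-convexity constants and the $p$-dependence straight, and confirming the regularity/Legendre conditions so that Corollary~\ref{cor:amd+amdg} legitimately applies — both are immediate from the explicit formulas for $\phi^*,\psi^*$ displayed above and from \cite[Proposition~3.5]{juditsky2008large}.
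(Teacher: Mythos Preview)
Your proposal is correct and follows exactly the paper's own proof, which simply reads ``Direct consequence of Corollary~\ref{cor:amd+amdg}.'' You have merely spelled out the verification of hypotheses and the substitution $\psi^*(\nabla f(x_{2N}))=\tfrac12\norm{\nabla f(x_{2N})}_q^2$, $D_\phi(x_\star,x_0)=\tfrac12\norm{x_\star-x_0}_p^2$, $\sigma_1=\sigma_2=p-1$ that the paper leaves implicit.
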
 
\begin{proof}
Direct consequence of Corollary~\ref{cor:amd+amdg}.
\qed\end{proof}

\paragraph{Lower bound.}
Our upper bound of Corollary~\ref{cor::grad_norm_p<2} matches the $\Omega\left(\frac{L\norm{x_0- x_\star}_p}{ N^2} \right)$ lower bound of \cite[Corollary~2]{diakonikolas2023complementary} up to a constant factor and is therefore optimal. Our upper bound improves upon the prior state-of-the-art result for this setup \cite[Theorem~3]{diakonikolas2023complementary}, which we now know is loose by a logarithmic factor. For $p\in (2,\infty)$, however, $\phi(\cdot)=\frac{1}{2}\norm{\cdot}^2_p$ is not strongly convex with respect to $\norm{\cdot}_p$ with a dimension-independent strong convexity parameter, as discussed in \cite[Section~1]{diakonikolas2023complementary}, so the concatenation of \ref{eqn::AMD} and \ref{eqn::AMD-G} cannot yield a dimension-independent upper bound. We leave the investigation of the $p\in (2,\infty)$ case to future work.

\subsection{Entropy-regularized optimal transport}
Consider the discrete optimal transport problem
\begin{align*}
\begin{array}{ll}
\underset{X \in {\RR_{+}^{m \times n}}}{\mbox{minimize}}&\quad \inner{C}{X}\\
\mbox{subject to}&\quad X\mathbf{1}=\mu,\,\, X^\intercal \mathbf{1}= \nu,
\end{array}
\end{align*}
where $C \in \RR^{m \times n}$ is the transport costs with nonnegative entries, 
$\mu\in \RR^m$ and $\nu\in \RR^n$ represent probability mass functions with full support, i.e.,
\[
\mu_1+\dots+\mu_m=1,
\quad
\mu_i>0\text{ for }i=1,\dots,m
\]
and likewise for $\nu$,
$\inner{C}{X}=\sum_{i=1}^m\sum_{j=1}^nC_{ij}X_{ij}$, and $\mathbf{1}$ denotes the vectors in $\mathbb{R}^m$ or $\mathbb{R}^n$ with all components $1$. Write $X_\star$ to denote a solution to this (non-regularized) problem.

The entropy-regularized discrete optimal transport problem with parameter $r>0$ \cite{marco2013sinkhorn,fang1992unconstraineed,lin2022efficiency} is 
\[
\begin{array}{ll}
\underset{X \in {\RR_{+}^{m \times n}}}{\mbox{minimize}}&\quad 
    \inner{C}{X} + r\inner{X}{\log(X)-U}\\
\mbox{subject to}&\quad X\mathbf{1}=\mu,\,\, X^\intercal \mathbf{1}= \nu,
\end{array}
\]
where $\log(X)$ denotes the element-wise logarithm, we use the convention $0\cdot \log(0)=0$, and $U\in \RR^{m \times n}$ is the matrix with all entries $1$. 
The corresponding regularized dual problem is
\[
\begin{array}{ll}
\underset{u \in \RR^m,\, v \in \RR^n}{\mbox{minimize}}&\quad 
     h(u,v) =r\log \Big( \sum_{i,j} \exp\big(\frac{u_i+v_j-c_{ij}}{r} \big)\Big)-\inner{\mu}{u} - \inner{\nu}{v}.
\end{array}
\]
The $h(u,v)$ function of the regularized dual problem is $(1/r)$-smooth with respect to $\|\cdot\|_\infty$.

If we solve the regularized dual problem approximately in the sense of making $\norm{\nabla h(u,v)}_1$ small, then we can obtain an approximate solution to the (non-regularized) primal problem using the following lemma.
\begin{lemma}\cite[Theorem~1, Lemma~7]{altschuler2018nearlinear}\label{lemma:OT_1}
For $(u,v) \in \RR^{m} \times \RR^n$, define the $m\times n$ matrix $B(u,v)_{ij} = \exp\left(\frac{u_i+v_j-c_{ij}}{r}\right)$ for $(i,j)\in [1,m] \times [1,n]$. Define $X(u,v)=B(u,v)/\inner{U}{B(u,v)}$. Then,
\begin{align*}
    \inner{C}{X(u,v)-X_\star} \leq r\log(mn) + 2\norm{C}_\infty \norm{\nabla h(u,v)}_1.
\end{align*}
Moreover, there exists an algorithm with running time $\mathcal{O}\left(mn \right)$ that takes an input $X(u,v)$ and outputs $\hat{X}(u,v)$, which satisfies the constraints
$\hat{X}(u,v)\mathbf{1}=\mu$ and $\hat{X}(u,v)^\intercal \mathbf{1}= \nu$ and satisfies $\|\hat{X}(u,v)-X(u,v)\|_1 \leq 2\norm{\nabla h(u,v)}_1$.
\end{lemma}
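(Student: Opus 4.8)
The plan is to establish the two assertions of the lemma separately, using throughout the single observation that $\nabla h(u,v)$ encodes the marginal violations of $X(u,v)$: differentiating the log-sum-exp term gives $\partial h/\partial u_i=(X(u,v)\mathbf{1})_i-\mu_i$ and $\partial h/\partial v_j=(X(u,v)^\intercal\mathbf{1})_j-\nu_j$, so $\norm{\nabla h(u,v)}_1=\norm{X(u,v)\mathbf{1}-\mu}_1+\norm{X(u,v)^\intercal\mathbf{1}-\nu}_1$. All estimates below are phrased in terms of these two marginal-deficit vectors.

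For the rounding assertion I would exhibit $\hat X(u,v)$ by the explicit construction: first multiply each row $i$ of $X(u,v)$ by $\min\{1,\mu_i/(X(u,v)\mathbf{1})_i\}$ so that the row sums are $\le\mu$, then perform the analogous column rescaling; since both steps only shrink entries, the resulting matrix $Y$ satisfies $Y\mathbf{1}\le\mu$ and $Y^\intercal\mathbf{1}\le\nu$ componentwise, the deficits $\mu-Y\mathbf{1}$ and $\nu-Y^\intercal\mathbf{1}$ carry the common total mass $1-\sum_{ij}Y_{ij}$, and adding the rank-one correction $(\mu-Y\mathbf{1})(\nu-Y^\intercal\mathbf{1})^\intercal/(1-\sum_{ij}Y_{ij})$ produces a matrix with exact marginals $\mu,\nu$. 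The $\ell_1$ bound follows from a telescoping estimate: the mass moved in the two rescalings is controlled by $\norm{X(u,v)\mathbf{1}-\mu}_1$ and $\norm{X(u,v)^\intercal\mathbf{1}-\nu}_1$, and the rank-one step adds only the residual deficit, controlled by the same two quantities; combining with the identification of the previous paragraph yields $\norm{\hat X(u,v)-X(u,v)}_1\le 2\norm{\nabla h(u,v)}_1$. Each operation is a rescaling or a rank-one update, so the total work is $\cO(mn)$.

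For the first inequality I would decompose $\inner{C}{X(u,v)-X_\star}$ into a \emph{regularization bias} and an \emph{infeasibility error}. Let $X^r_\star$ minimize the entropy-regularized primal; since all matrices with marginals $\mu,\nu$ have unit mass (because $\mu,\nu$ are probability vectors), $-\inner{X}{\log X-U}=H(X)+1$ with $0\le H(X)\le\log(mn)$, so comparing the regularized objective at $X^r_\star$ with that at the feasible point $X_\star$ gives $\inner{C}{X^r_\star}-rH(X^r_\star)\le\inner{C}{X_\star}-rH(X_\star)\le\inner{C}{X_\star}$, i.e.\ the bias $\inner{C}{X^r_\star-X_\star}$ is at most $r\log(mn)$. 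For the infeasibility error I would use the Fenchel identity for the softmax, $h(u,v)=\inner{u}{\rho}+\inner{v}{\gamma}-\inner{C}{X(u,v)}+rH(X(u,v))$ with $\rho=X(u,v)\mathbf{1}-\mu$, $\gamma=X(u,v)^\intercal\mathbf{1}-\nu$, combined with $h(u,v)\ge\inf h=-\inner{C}{X^r_\star}+rH(X^r_\star)$ (the latter obtained by evaluating the identity at the regularized dual optimum, where $\rho=\gamma=0$ and $X=X^r_\star$) and $H(X(u,v))\le\log(mn)$, to get $\inner{C}{X(u,v)-X_\star}\le r\log(mn)+\inner{u}{\rho}+\inner{v}{\gamma}$. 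A Hölder bound gives $\inner{u}{\rho}+\inner{v}{\gamma}\le\max\{\norm{u}_\infty,\norm{v}_\infty\}\norm{\nabla h(u,v)}_1$; alternatively one can pass through the rounded iterate, writing $\inner{C}{X(u,v)-X_\star}=\inner{C}{X(u,v)-\hat X(u,v)}+\inner{C}{\hat X(u,v)-X_\star}$ and bounding the first summand by $2\norm{C}_\infty\norm{\nabla h(u,v)}_1$ via the second assertion and the second by the near-optimality of the feasible $\hat X(u,v)$.

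The delicate point is pinning down the constant $2\norm{C}_\infty$: this needs an a priori bound $\norm{u}_\infty,\norm{v}_\infty=\cO(\norm{C}_\infty)$, which for the dual iterates of interest one obtains by exploiting the two-parameter translation invariance $h(u+s\mathbf{1},v+t\mathbf{1})=h(u,v)$ to recenter and then invoking the standard fact that the recentered dual optimum (and, with the initialization used by the method, the iterates it produces) stays within $\cO(\norm{C}_\infty)$ in $\ell_\infty$. The remaining ingredients — the entropy bounds, the softmax Fenchel identity, and the telescoping $\ell_1$ estimate for the rounding — are routine bookkeeping.
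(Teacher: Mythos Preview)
The paper does not supply a proof of this lemma; it is quoted verbatim from \cite{altschuler2018nearlinear}. Your sketch of the rounding assertion reproduces \cite[Algorithm~2, Lemma~7]{altschuler2018nearlinear} correctly.

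For the first inequality, however, both of your routes leave a gap. The Fenchel--H\"older route gives $\langle u,\rho\rangle+\langle v,\gamma\rangle\le\max\{\|u\|_\infty,\|v\|_\infty\}\,\|\nabla h(u,v)\|_1$, and recentering via the translation invariance controls $\|u\|_\infty,\|v\|_\infty$ only at (or near) the dual optimum---that is precisely Lemma~\ref{lemma:OT_2}---not at an arbitrary $(u,v)$, so you cannot recover the constant $2\|C\|_\infty$ in the stated generality. Your alternative route through $\hat X(u,v)$ has the decomposition oriented the wrong way: in $\langle C, X(u,v)-X_\star\rangle = \langle C, X(u,v)-\hat X(u,v)\rangle + \langle C, \hat X(u,v)-X_\star\rangle$ the second term is \emph{nonnegative} (since $\hat X(u,v)$ is feasible for the unregularized problem), and nothing you have established makes $\hat X(u,v)$ near-optimal. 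The missing idea, which is how \cite{altschuler2018nearlinear} actually argues, is to round in the opposite direction: round $X_\star$ to a matrix $X_\star'$ having the \emph{same} marginals as $X(u,v)$. Because $X(u,v)_{ij}\propto e^{(u_i+v_j-c_{ij})/r}$ has the Gibbs form, it minimizes $\langle C,\cdot\rangle - rH(\cdot)$ over matrices with those marginals, so $\langle C, X(u,v)-X_\star'\rangle\le r\bigl(H(X(u,v))-H(X_\star')\bigr)\le r\log(mn)$; then $\langle C, X_\star'-X_\star\rangle\le\|C\|_\infty\|X_\star'-X_\star\|_1\le 2\|C\|_\infty\|\nabla h(u,v)\|_1$ by the rounding lemma applied with target marginals $X(u,v)\mathbf{1}$, $X(u,v)^\intercal\mathbf{1}$.
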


Specifically, set $r=\frac{\varepsilon}{2\log(mn)}$ and find $(u_0,v_0)$ such that
$\norm{\nabla h(u_0,v_0)}_1 \leq  \frac{\varepsilon}{8\norm{C}_\infty}$.
By Lemma~\ref{lemma:OT_1}, this is sufficient, since
\begin{align*}
    \langle C,\hat{X}(u_0,v_0)-X_\star\rangle &\leq r\log(mn) + 2\norm{C}_\infty \norm{\nabla h(u_0,v_0)}_1  + \inner{C}{X(u_0,v_0)-X_\star} \\
    &\leq r\log(mn) + 4\norm{C}_\infty \norm{\nabla h(u_0,v_0)}_1 \\
    & \leq \varepsilon.
\end{align*}

Take $\phi(x) = \psi(x)=\frac{1}{2}\norm{x}^2_2$, which is $1$-strongly convex with respect to $\|\cdot\|_\infty$. With a starting point $(0,0)$, implement the concatenation of \ref{eqn::AMD} and \ref{eqn::AMD-G} and denote the output as $(u_{2N},v_{2N})$. By Corollary~\ref{cor:amd+amdg},
\begin{align*}
    \norm{\nabla h(u_{2N},v_{2N})}_2^2  = \mathcal{O}\Big(\frac{\norm{(u_\star,v_\star)}_2 ^2}{r^2 N^4}\Big),
\end{align*}
where $(u_\star,v_\star)\in \argmin_{u,v}h(u,v)$.\ 
\begin{lemma}\cite[Lemma~3.2]{lin2022efficiency} \label{lemma:OT_2}
 If $\mu$ and $\nu$ have full support, then there exists an optimal point $(u_\star,v_\star)$ of $h(u,v)$ such that $\norm{(u_\star,v_\star)}_\infty = \mathcal{O}\left( \norm{C}_\infty\right)$.
\end{lemma}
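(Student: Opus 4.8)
The plan is to exploit a two-parameter translation symmetry of $h$ together with the first-order optimality conditions, which express an optimal $(u_\star,v_\star)$ via an entropic ``soft $c$-transform''; the bound then follows from nothing more than $0\le c_{ij}\le \norm{C}_\infty$ and the full-support assumption on $\mu,\nu$.

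First I would record the symmetry. Since $\inner{\mu}{\mathbf{1}}=\inner{\nu}{\mathbf{1}}=1$, a direct check gives $h(u+s\mathbf{1}, v+t\mathbf{1})=h(u,v)$ for every $s,t\in\RR$: the $r\log\sum\exp$ term increases by $s+t$, which is exactly cancelled by the decrease $-s-t$ in the linear part. Hence minimizers of $h$ form a two-dimensional affine family, and it suffices to bound one representative. Normalize it by translating so that $\max_i u_i=0$ and $\max_j v_j=0$.

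Next, I would use the first-order conditions. Writing $B_{ij}=\exp((u_i+v_j-c_{ij})/r)$, $S=\sum_{ij}B_{ij}$ and $X=B/S$, the stationarity equations $\nabla h=0$ are exactly $X\mathbf{1}=\mu$ and $X^\intercal\mathbf{1}=\nu$. With the chosen normalization, $u_i\le 0$ and $v_j\le 0$ force $B_{ij}\le 1$, so $S\le mn$, while the term with $u_i=v_j=0$ gives $S\ge e^{-\norm{C}_\infty/r}$; hence $r\log S\in[-\norm{C}_\infty,\,r\log(mn)]$. Solving the $v$-block of the stationarity conditions for $e^{v_j/r}$ yields $v_j=r\log\nu_j+r\log S-r\log\big(\sum_i\exp((u_i-c_{ij})/r)\big)$, where the inner sum lies in $[e^{-\norm{C}_\infty/r},\,m]$ because $u_i\le 0$ and $0\le c_{ij}\le\norm{C}_\infty$, so the last term lies in $[-\norm{C}_\infty,\,r\log m]$. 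Combining the three pieces gives $|v_j|\le r\log(1/\nu_j)+O(\norm{C}_\infty+r\log(mn))$, and symmetrically $|u_i|\le r\log(1/\mu_i)+O(\norm{C}_\infty+r\log(mn))$. Since $\mu,\nu$ have full support and, in the regime of interest, $r=\varepsilon/(2\log(mn))$ is small relative to $\norm{C}_\infty$, the $r\log(\cdot)$ terms are absorbed, yielding $\norm{(u_\star,v_\star)}_\infty=O(\norm{C}_\infty)$.

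The main obstacle — and the reason one must be careful — is that the entropic soft $c$-transform, unlike the classical hard $c$-transform that bounds the unregularized potentials, carries the extra additive terms $r\log S$, $r\log\mu_i$, $r\log\nu_j$. One has to recognize that the symmetry group is genuinely two-dimensional, so that both the partition constant $S$ and the overall scale can be normalized away cleanly, and that the residual $r\log(1/\mu_i)$, $r\log(1/\nu_j)$ factors are dominated by $\norm{C}_\infty$ precisely because $r$ is the small entropic parameter; only then does the bound collapse to the stated $O(\norm{C}_\infty)$.
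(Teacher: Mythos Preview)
The paper does not supply its own proof of this lemma; it is quoted verbatim from \cite{lin2022efficiency} and used as a black box in the optimal-transport application. So there is no in-paper argument to compare against, and your sketch is essentially the standard argument from that literature: exploit the translation invariance of $h$ to normalize the potentials, then read off entrywise bounds from the stationarity conditions $X\mathbf{1}=\mu$, $X^\intercal\mathbf{1}=\nu$ with $X_{ij}\propto e^{(u_i+v_j-c_{ij})/r}$.

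There is, however, a genuine gap in the way you close the argument. Your bound is, as you yourself write,
\[
|u_i|\;\lesssim\;\norm{C}_\infty+r\log(mn)+r\log(1/\mu_i),\qquad
|v_j|\;\lesssim\;\norm{C}_\infty+r\log(mn)+r\log(1/\nu_j),
\]
and you then dispose of the last terms by appealing to the particular choice $r=\varepsilon/(2\log(mn))$ made later in the application. This is not legitimate at the level of the lemma: the statement is about $h$ for a fixed but arbitrary $r>0$ and arbitrary full-support $\mu,\nu$, and no downstream choice of $r$ can be invoked inside its proof. Moreover, ``full support'' alone does not control $\min_i\mu_i$ or $\min_j\nu_j$; indeed, with $C=0$ the optimal coupling is the product measure, and one checks that $u_i-u_{i'}=r\log(\mu_i/\mu_{i'})$, which is unbounded as $\min_i\mu_i\to 0$. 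So the extra terms cannot be dropped for free.

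The fix is not to sharpen the estimate but to state it honestly: what the argument actually yields (and what the cited source proves) is a bound of the form $\norm{(u_\star,v_\star)}_\infty\le K\big(\norm{C}_\infty+r\log(mn)+r\log(1/\min_i\mu_i)+r\log(1/\min_j\nu_j)\big)$ for an absolute constant $K$. In the paper's application one then substitutes $r=\varepsilon/(2\log(mn))$, at which point all the $r\log(\cdot)$ contributions are dominated by $\norm{C}_\infty$ up to problem-dependent constants, and the final complexity calculation goes through unchanged. Your mechanics are correct; only the packaging---absorbing application-specific parameter choices into the proof of a general lemma---needs to be adjusted.
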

Using Lemma~\ref{lemma:OT_2} and $\norm{(u,v)}_1 \leq (m+n)^{1/2}\norm{(u,v)}_2\leq (m+n)\norm{(u,v)}_\infty$ for $(u,v) \in \RR^{m} \times \RR^n$, we get 
\begin{align*}
    &\norm{\nabla h(u_{2N},v_{2N})}_1 \leq (n+m)^{1/2}\norm{\nabla h(u_{2N},v_{2N})}_2 
    \\& \qquad = \mathcal{O}\left(\frac{(m+n)^{1/2}\norm{(u_\star,v_\star)}_2}{r N^2}\right)  = \mathcal{O} \left(\frac{(m+n)\log(mn)\norm{C}_\infty}{\varepsilon N^2} \right).
\end{align*}
To ensure $\norm{\nabla h(u_{2N},v_{2N})}_1 \leq  \mathcal{O}\big(\frac{\varepsilon}{\norm{C}_\infty}\big)$, set
\[
\frac{(m+n)\log(mn)\norm{C}_\infty}{\varepsilon N^2} \le 
\frac{\varepsilon}{\norm{C}_\infty}
\]
and solve for $N$ to conclude that $\mathcal{O}\left((m+n)^{1/2}\log(mn)\norm{C}_\infty/\varepsilon \right)$ iterations are required. Finally, since each step of \ref{eqn::AMD-G} and \ref{eqn::AMD} require $\mathcal{O}(mn)$ arithmetic operations, the total arithmetic complexity needed to obtain an $\varepsilon$-approximate solution for the (non-regularized) discrete optimal transport problem is
\[
\mathcal{O}\Big(\frac{(m+n)^{5/2}\log(mn)\norm{C}_\infty}{\varepsilon}\Big).
\]

This rate improves upon the rate of \cite[Section~5.6]{diakonikolas2023complementary} by logarithmic factors and matches the state-of-the-art ${\mathcal{O}}((m+n)^{5/2}\log(mn)\norm{C}_\infty/\varepsilon)$ rate of \cite{antonin2022accelerated}, among non-parallel methods.

\section{Comparison with H-duality}
\label{ss:h-duality-relation}

In this section, we discuss the relationship between our proposed mirror duality and the H-duality of prior work \cite{kim2023timereversed}. First, we show that the \emph{mirror dual} operation reduces to the H-dual operation in the Euclidean case. 

\begin{proposition} \label{prop:H-dual}
Let $X$ be an Euclidean space equipped with the Euclidean norm (or $X$ is a Hilbert space). Let $\phi=\psi=(1/2)\|\cdot\|_2^2$ and $\sigma=1$. For a CFOM \eqref{eqn::CFSFOM}, assume $1 - \sum_{i=0}^{k}\sum_{j=0}^{i+1} b_{i+1,j} =1$ for $k=0,\dots,N-1$, which is a sufficient condition for $x_k\in \mathrm{conv}\{\nabla \phi^{*}(y_0),\dots,\nabla \phi^{*}(y_k)\}$ for $k=0,\dots , N$. 
\footnote{
If the CFOM \eqref{eqn::CFSFOM} does not satisfy  $1 - \sum_{i=0}^{k}\sum_{j=0}^{i+1} b_{i+1,j} =1$ for $k=0,\dots,N-1$, then it reduces to an $N$-step FSFOM of the form
\[
x_{k+1} = \gamma_kx_k - \frac{1}{L}\sum_{i=0}^{k} h_{k+1,i}\nabla f(x_i), \quad k=0,1,\dots, N-1,
\]
where $\gamma_k\neq 1 $ for some $k$.
} Then the CFOM \eqref{eqn::CFSFOM} reduces to an $N$-step FSFOM of the form
\[
x_{k+1} = x_k - \frac{1}{L}\sum_{i=0}^{k} h_{k+1,i}\nabla f(x_i), \quad k=0,1,\dots, N-1,
\]
and the mirror dual CFOM \eqref{eqn::dual_alg_def} reduces to an $N$-step FSFOM of the form
\[
q_{k+1} = q_k- \frac{1}{L}\sum_{i=0}^{k} h_{N-i,N-1-k}\nabla f(q_i), \quad k=0,1,\dots, N-1,
\]
so the FSFOMs are H-duals of each other.
\end{proposition}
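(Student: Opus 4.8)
The plan is to exploit that, in the stated Euclidean (or Hilbert) setting, $\phi^* = \psi^* = \tfrac12\norm{\cdot}_2^2$, so $\nabla\phi^* = \nabla\psi^* = \mathrm{id}$ (under the identification $X^* = X$) and $\sigma = 1$. First I would observe that taking successive differences in $k$ shows the hypothesis $1-\sum_{i=0}^{k}\sum_{j=0}^{i+1}b_{i+1,j}=1$ for $k=0,\dots,N-1$ is equivalent to the row-sum conditions $\sum_{i=0}^{k+1}b_{k+1,i}=0$ for $k=0,\dots,N-1$. I then fix the matrix $H\in\RR^{N\times N}$ whose only possibly nonzero entries are
\[
h_{k+1,l} \;:=\; -L\sum_{i=l+1}^{k+1}\ \sum_{j=l}^{i-1} b_{k+1,i}\,a_{j+1,l},\qquad 0\le l\le k\le N-1 ,
\]
and the proof reduces to two claims: (i) the CFOM \eqref{eqn::CFSFOM} collapses to the FSFOM with matrix $H$; (ii) its mirror dual \eqref{eqn::dual_alg_def} collapses to the FSFOM with matrix $H^A$.

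For (i) I would eliminate the auxiliary sequence $y$. Telescoping the $y$-update gives $y_i - y_0 = -\sum_{j=0}^{i-1}\sum_{l=0}^{j}a_{j+1,l}\nabla f(x_l)$, and the row-sum condition lets me rewrite the $x$-update as $x_{k+1}-x_k = -\sum_{i=0}^{k+1}b_{k+1,i}\,y_i = -\sum_{i=0}^{k+1}b_{k+1,i}(y_i-y_0)$. Substituting and reordering the finite sums so as to collect $\nabla f(x_l)$, its coefficient is exactly $\sum_{i=l+1}^{k+1}\sum_{j=l}^{i-1}b_{k+1,i}a_{j+1,l} = -h_{k+1,l}/L$, which is precisely the FSFOM defined by $H$.

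For (ii) I would carry out the analogous elimination of the sequence $r$ in \eqref{eqn::dual_alg_def}. Adopting the convention $r_{-1}=0$ (as in the proof of Theorem~\ref{thm::mirror_duality_main}), the initialization $r_0=-b_{N,N}\nabla f(q_0)$ is exactly the $m=-1$ instance of the recursion $r_{m+1}-r_m = -\sum_{p=0}^{m+1}b_{N-p,N-1-m}\nabla f(q_p)$, so telescoping gives $r_i = -\sum_{m=-1}^{i-1}\sum_{p=0}^{m+1}b_{N-p,N-1-m}\nabla f(q_p)$. Plugging this into $q_{k+1}-q_k = -\sum_{i=0}^{k}a_{N-i,N-1-k}\,r_i$ and collecting $\nabla f(q_p)$ expresses $q_{k+1}-q_k$ as a linear combination of $\nabla f(q_0),\dots,\nabla f(q_k)$. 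The genuinely delicate step is to identify the resulting coefficient of $\nabla f(q_p)$ with $-h_{N-p,N-1-k}/L$; I would do this by applying, inside the defining double sum for $h_{N-p,N-1-k}$, the reindexing $i\mapsto N-1-m$ in the $b$-factor and $j\mapsto N-1-i'$ in the $a$-factor, and verifying that the summation region $\{\,l+1\le i\le k+1,\ l\le j\le i-1\,\}$ is carried bijectively onto the region produced by the dual computation, with the boundary case $p=0$ (where the $m=-1$ term carries $b_{N,N}$) checked separately. Since $H^A_{k+1,i+1}=H_{N-i,N-k}=h_{N-i,N-1-k}$, this shows the mirror dual equals the FSFOM with matrix $H^A$, i.e.\ the H-dual of the FSFOM defined by $H$, which is the assertion.

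I expect the main obstacle to be exactly this index bookkeeping in step (ii): matching the nested summation ranges after the anti-diagonal reindexing and keeping the endpoint contributions consistent — in particular the $m=-1$ term, and the observation that the convention $b_{0,0}=-1$ does not enter the $q$-recursion (only the $r$-recursion). Everything else is routine telescoping and reordering of finite sums over triangular index sets.
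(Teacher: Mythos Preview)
Your plan is correct and part (i) is essentially what the paper does. For part (ii), though, the paper sidesteps exactly the index bookkeeping you flag as the main obstacle by passing to a matrix formulation. It packages the step sizes into lower-triangular matrices $A,B\in\RR^{N\times N}$ and shows that the primal CFOM collapses to the FSFOM with $H=BCA$, where $C_{k,i}=1_{\{i\le k-1\}}$ is the strictly lower-triangular all-ones (cumulative-sum) matrix. The mirror dual has the roles of $A$ and $B$ swapped and anti-transposed, so the same elimination yields $H'=A^{A}CB^{A}$. Using $M^{A}=JM^{\intercal}J$ (with $J$ the exchange matrix) one has $(XY)^{A}=Y^{A}X^{A}$, and one checks $C^{A}=C$; hence $(H')^{A}=(B^{A})^{A}C^{A}(A^{A})^{A}=BCA=H$ in one line. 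Your elementwise reindexing would establish the same identity coordinate-by-coordinate and requires no new machinery, whereas the paper's factorization makes the anti-diagonal symmetry structurally transparent and eliminates the boundary-case checks (the $m=-1$ term, endpoint contributions) that you would otherwise have to track by hand.
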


\begin{proof}
For any matrix $M \in \RR^{N\times N}$, we can describe the relationship between $M$ and $M^\AT$ via introducing  $J \in \RR^{N\times N}$ as $J_{i,j}=1$ if $i+j=N+1$ and $J_{i,j}=0$ otherwise, so that
\begin{align} \label{eqn::appendix_AT_property}
    M^\AT = JM^{\intercal}J.
\end{align}
Moreover, for $N\times N$ matrices $X$ and $Y$,
\begin{align} \label{eqn::appendix_AT_property_2}
    \left(XY\right)^\AT = J\left( XY\right)^\intercal J = JY^\intercal X^\intercal J = JY^\intercal J \cdot  J X^\intercal J = Y^\AT X^\AT.
\end{align}
This derivation employs \eqref{eqn::appendix_AT_property} and $J^2 = I$.

Define matrices $A,B\in \RR^{N \times N}$ as $A_{k+1,i+1}=a_{k+1,i+1}$, $B_{k+1,i+1}=b_{k+1,i}$ and otherwise $0$. Since $\nabla \psi^*(y_i)=y_i$,
\begin{align*}
    y_{k+1} = & y_k - \frac{1}{L}\sum_{i=0}^{k} a_{k+1,i}\nabla f(x_i), \\
 x_{k+1} = & x_k - \sum_{i=0}^{k+1} b_{k+1,i}y_i \\= & x_k +\frac{1}{L} \sum_{i=0}^{k+1} b_{k+1,i}\left(-y_0 + \sum_{j=0}^{i-1}\sum_{l=0}^{j} a_{j+1,l}\nabla f(x_l) \right)
 \\ = & x_k +\frac{1}{L} \sum_{i=0}^{k+1} b_{k+1,i}\left(-x_0 + \sum_{j=0}^{i-1}\sum_{l=0}^{j} a_{j+1,l}\nabla f(x_l) \right) \\ =& x_k + \frac{1}{L} \sum_{i=0}^{k+1} b_{k+1,i}\left(\sum_{j=0}^{i-1}\sum_{l=0}^{j} a_{j+1,l}\nabla f(x_l)\right) - \frac{1}{L}\sum_{i=0}^{k+1} b_{k+1,i}x_0 \\
 =& x_k + \frac{1}{L} \sum_{i=0}^{k+1} b_{k+1,i}\left(\sum_{j=0}^{i-1}\sum_{l=0}^{j} a_{j+1,l}\nabla f(x_l)\right).
\end{align*} 
for $k=0,\dots,N-1$. $\sum_{i=0}^{k+1} b_{k+1,i}x_0$ vanishes to $0$, due to \eqref{eqn::appendix_convex_sum_pf_1}. 
The coefficient of $\nabla f(x_l)$ is $h_{k+1,l} =  \sum_{i=0}^{k+1} \left( b_{k+1,i}\sum_{j=0}^{i-1} a_{j+1,i}\right)$. To simplify the above equation, define $H,C \in \RR^{N \times N}$ as $\left[H_{k,i}\colon =h_{k+1,i}\,\,\text{and otherwise $0$}, \quad C_{k,i}\colon =1_{i \leq k-1}\right]$. Then $ H_{k,i} = \sum_{i=0}^{k}\left( B_{k,i}\sum_{j=0}^{i-1}A_{j,i}\right) = \sum_{0 \leq i,j\leq N} B_{k,i}C_{i,j}A_{j,i}$ thus $H=BCA$. For the mirror dual, we obtain
\begin{align*}
    q_{k+1} = q_k -\frac{1}{L}\sum_{i=0}^{k} a_{N-i,N-1-k}r_i, \quad r_{k+1}= r_k -\sum_{i=0}^{k+1} b_{N-i,N-1-l}\nabla f(q_i).
\end{align*}
In the same way, and further using $b_{N,N}=-1$ gives
$H' = A^\AT C  B^\AT$ where $H'_{k,i}=h'_{k+1,i}$. Finally $  \left(H'\right)^\AT = \left(B^\AT\right)^\AT C^\AT \left(A^\AT\right)^\AT = BCA = H$.
\qed\end{proof}

\paragraph{Mirror duality and H-duality.}
Consider when the distance-generating function is the Euclidean norm, \( \phi = \psi = \frac{1}{2} \| \cdot \|^2 \). Proposition~\ref{prop:H-dual} indicates that the mirror dual operation generalizes the H-dual operation. Furthermore, since the Bregman divergence with respect to the Euclidean norm
equals the squared Euclidean distance, our mirror duality theorem generalizes the H-duality theorem \cite[Theorem 1]{kim2023timereversed}.
 However, strictly speaking, the two theorems technically differ.  The energy functions $\{\cU_k\}_{k=0}^{N}$ and $\{\cV_k\}_{k=0}^N$ used in our mirror duality theorem and the H-duality theorem of \cite{kim2023timereversed} are similar, but not the same, primarily due to a technical constraint arising from the inability to expand the norm into an inner product in general Banach spaces.
\section{Conclusion}
\label{s:conclusion}
In this work, we presented mirror duality, a one-to-one correspondence between mirror-descent-type methods reducing function value and those reducing gradient magnitude, thereby advancing our understanding of the task of reducing gradient magnitude. Furthermore, we used mirror duality to obtain \ref{eqn::AMD-G}, the first method that reduces gradient magnitude at an optimal accelerated rate in the mirror-descent setup. 

The results of this work inspire some interesting directions for future work. One direction would be to consider the relaxed assumption of a weakly smooth objective function $f$ and a uniformly convex distance-generating function $\phi$. For the primal setup of reducing function value, \cite{alexandre2018optimal} provides an accelerated mirror-descent-type method. For a related composite minimization setup, \cite{diakonikolas2023complementary} provides an accelerated first-order method. Extending mirror duality and \ref{eqn::AMD-G} to the weakly smooth and uniformly convex setup will also have some interesting applications. 
Another potential direction is to extend \ref{eqn::dual_MD} and \ref{eqn::AMD-G} to the constrained setup in which the notion of mirror descent and dual averaging differ. Yet another direction of future work is to consider the stochastic setup. For the setup of using stochastic gradients of convex functions, \cite{zhu2018how} provides a rate for reducing the Euclidean norm of gradients. Investigating whether similar rates are attainable in the mirror descent setup or whether there is an analog of mirror duality in the stochastic setup would be interesting.

\section*{Acknowledgement}
We thank TaeHo Yoon for reviewing the manuscript and providing valuable feedback. We thank Ya-Ping Hsieh, Anna Korba, Emanuel Laude, and Francesco Orabona for helpful discussions that improved this work. JD acknowledges funding from the NSF grant 2007757.

\bibliographystyle{abbrv}
\bibliography{main}

\newpage
\appendix
\begin{appendix}
\section{Omitted proofs}
\label{Appendix:Banach}
\paragraph{The proof of Lemma~\ref{lemma::amd_and_dual_amd}.} Using the expression of $x_k$ that we used in Fact~\ref{fact:amd} provides us
\begin{align*}
    x_k - x_{k+1} &= \left( \frac{\theta_{k}^2 -\theta_{k-2}^2}{\theta_{k}^2}\nabla \phi^{*}(y_{k})  + \sum_{s = 1}^{k-1}  \frac{\theta_{s-1}^2 -\theta_{s-2}^2}{\theta_{k}^2}\nabla \phi^*(y_{s})\right)
    \\
    &\qquad \qquad \qquad - \left(\frac{\theta_{k+1}^2 -\theta_{k-1}^2}{\theta_{k+1}^2}\nabla \phi^*(y_{k+1})  + \sum_{s = 1}^k  \frac{\theta_{s-1}^2 -\theta_{s-2}^2}{\theta_{k+1}^2}\nabla \phi^*(y_{s}) \right)
    \\
    &= \sum_{s = 1}^{k-1}  \left( \frac{\theta_{s-1}^2 -\theta_{s-2}^2}{\theta_{k}^2} -  \frac{\theta_{s-1}^2 -\theta_{s-2}^2}{\theta_{k+1}^2} \right) \nabla \phi^*(y_{s})\\
    &\quad + \left(\frac{\theta_{k}^2 -\theta_{k-2}^2}{\theta_{k}^2}- \frac{\theta_{k-1}^2 -\theta_{k-2}^2}{\theta_{k+1}^2}\right)\nabla \phi^*(y_{k}) - \frac{\theta_{k+1}^2 -\theta_{k-1}^2}{\theta_{k+1}^2}\nabla \phi^*(y_{k+1}).
\end{align*}
Thus $b_{k+1, 0}^{(\text{AMD})} = 0$, $b_{k+1, s}^{(\text{AMD})} =  \frac{\theta_{s-1}^2 -\theta_{s-2}^2}{\theta_{k}^2} -  \frac{\theta_{s-1}^2 -\theta_{s-2}^2}{\theta_{k+1}^2}$ for $1 \leq s \leq k-1$, $b_{k+1, k}^{(\text{AMD})} = \frac{\theta_{k}^2 -\theta_{k-2}^2}{\theta_{k}^2}- \frac{\theta_{k-1}^2 -\theta_{k-2}^2}{\theta_{k+1}^2}$, and $b_{k+1, k+1}^{(\text{AMD})} = - \frac{\theta_{k+1}^2 -\theta_{k-1}^2}{\theta_{k+1}^2} $ holds. Moreover, we have
\begin{align*}
    a_{k+1, s}^{(\text{AMD})} = 0 \text{ for } s = 0, 1, \dots, k-1, \qquad a_{k+1, k}^{(\text{AMD})} = \frac{\sigma}{L}(\theta_k^2 - \theta_{k-1}^2).
\end{align*} 
Now, we calculate the step sizes of \ref{eqn::AMD-G}. For $k = 0,1, \dots, N-1$, 
\begin{align}
    r_{k+1} &= \sum_{s = 0}^{k} \left((\theta_{N-s-1}^2 - \theta_{N-s-2}^2)\left(g_{s+1}-g_s \right)+ (\theta_{N-s-2}^2 - \theta_{N-s-3}^2)g_{s+1}\right) + r_0 \nonumber
    \\
    &= (\theta_{N-k-2}^2 - \theta_{N-s-3}^2)g_{k+1}+  \sum_{s=1}^{k+1}(\theta_{N-s}^2 - \theta_{N-s-1}^2) g_s  - (\theta_{N-1}^2 - \theta_{N-2}^2) g_0 + r_0 \label{eqn:rNexpand}.
\end{align}
In addition, for $s = 0,1, \dots, N$, the update of \ref{eqn::AMD-G} provides  
\begin{align}
    g_s &= g_0 + \sum_{l = 0}^{s-1}\frac{1}{\theta_{N-l-1}^2}\left(\nabla f(q_{l+1}) -\nabla f(q_l) \right) \nonumber
    \\
    &= g_0 + \sum_{l = 1}^{s-1}\left(\frac{1}{\theta_{N-l}^2} - \frac{1}{\theta_{N-l-1}^2}\right) \nabla f(q_{l}) + \frac{1}{\theta_{N-s}^2}\nabla f(q_{s}) - \frac{1}{\theta_{N-1}^2}\nabla f(q_0) \label{eqn:gkexpand}.
\end{align}
Now, using \eqref{eqn:rNexpand}, we rewrite $r_k - r_{k+1}$ with the summation of $\nabla f(q_l)$ for $k = 0,1, \dots, N-1$ as follows:
\begin{align*}
    b_{k+1, 0}^{(\text{AMD-G})} &= (\theta_{N-k-2}^2- \theta_{N-k-3}^2)\frac{1}{\theta_{N-1}^2}  - (\theta_{N-k+1}^2 - \theta_{N-k}^2)\frac{1}{\theta_N^2}     
    \\
b_{k+1, l}^{(\text{AMD-G})} &= - (\theta_{N-k-2}^2 -\theta_{N-k-3}^2) \left(\frac{1}{\theta_{N-l}^2} - \frac{1}{\theta_{N-l-1}^2}\right)
\\
    b_{k+1, k}^{(\text{AMD-G})} &= (\theta_{N-k}^2 - \theta_{N-k-2}^2)  \frac{1}{\theta_{N-k}^2}  -  (\theta_{N-k-1}^2 - \theta_{N-k-3}^2)\left(\frac{1}{\theta_{N-k}^2} - \frac{1}{\theta_{N-k - 1}^2} \right) 
    \\
    &\qquad - (\theta_{N-k}^2 - \theta_{N-k-1}^2) \frac{1}{\theta_{N-k}^2}
    \\
    &= (\theta_{N-k-3}^2 - \theta_{N-k-2}^2) \frac{1}{\theta_{N-k}^2}  + (\theta_{N-k-1}^2 - \theta_{N-k-3}^2) \frac{1}{\theta_{N-k - 1}^2}  
    \\
   b_{k+1, k+1}^{(\text{AMD-G})} &= -(\theta_{N-k-1}^2 - \theta_{N-k-3}^2)  \frac{1}{\theta_{N-k-1}^2}.    
\end{align*}
Also, by the update rule of \ref{eqn::AMD-G}, we have 
\begin{align*}
q_{k+1} = q_k - \frac{\sigma}{L}(\theta_{N-k-1}^2 - \theta_{N-k-2}^2) \nabla \psi^* (r_k),
\end{align*}
so we can specify $a_{k+1, i}^{(\text{AMD-G})}$ as follows:
\begin{align*}
    a_{k+1, s}^{(\text{AMD-G})} = 0 \text{ for } s = 0, 1, \dots, k-1, \qquad a_{k+1, k}^{(\text{AMD-G})} =  \frac{\sigma}{L}(\theta_{N-k-1}^2 - \theta_{N-k-2}^2).
\end{align*} 
Therefore, we conclude that the mirror dual of \ref{eqn::AMD} is \ref{eqn::AMD-G}.   
\end{appendix}
\end{document}